\documentclass[reqno, 11pt, a4paper]{amsart}

\usepackage[english]{babel}
\usepackage{amsmath}
\usepackage{amssymb}
\usepackage{enumerate}
\usepackage{commath}
\usepackage{mdwlist}
\usepackage{mathtools}
\usepackage{ mathrsfs }
\usepackage[arrow, matrix, curve]{xy}
\usepackage[a4paper]{geometry}
\usepackage{fullpage}
\usepackage{framed}
\usepackage{tikz}
\usetikzlibrary{arrows}
\usepackage{mdframed}
\usepackage{lipsum}
\usepackage{mathtools}
\usepackage{verbatim}
\usepackage{bbold}
\usepackage{eufrak}
\usepackage[normalem]{ulem}

\usepackage{wrapfig}
\usepackage{graphicx}
\usepackage{tabularx}
\usepackage{verbatim}
\usepackage{amsthm}
\usepackage{commath}
\usepackage{amsfonts}
\usepackage{mathrsfs}
\usepackage{amsopn}
\usepackage{stackrel}
\usepackage{manfnt}
\usepackage{bbm}
\usepackage{fancyhdr}
\usepackage{centernot}
\usepackage{stmaryrd}
\usepackage{comment}
\usepackage{cancel}
\usepackage{hyperref}
\usepackage{kantlipsum}
\usepackage{pgfplots}

\def\cdf(#1)(#2)(#3){1-0.5*(1+(erf((#1-#2)/(#3*sqrt(2)))))}%

\allowdisplaybreaks

\theoremstyle{definition}
\newtheorem{defn}{Definition}[section]

\newmdtheoremenv{Definition}[defn]{Definition}

\newtheorem{remark}[defn]{Remark}

\newtheorem*{corol*}{Corollary}
\newtheorem*{remark*}{Bemerkung}

\newtheorem*{exmp*}{Example}

\newtheorem*{subalg*}{\textsf{Sub-Algorithmus}}

\theoremstyle{plain}
\newtheorem{thm}[defn]{Theorem}
\newmdtheoremenv{theo}[defn]{Theorem}
\newmdtheoremenv{Lemma}[defn]{Lemma}
\newmdtheoremenv{Korollar}[defn]{Corollary}
\newtheorem*{thm*}{Theorem}
\newtheorem{lemma}[defn]{Lemma}

\newtheorem{prop}[defn]{Proposition}
\newtheorem{corol}[defn]{Corollary}

\newtheorem*{thmen*}{Theorem}

\newtheorem*{corolen*}{Corollary}

\numberwithin{equation}{section}

\newcommand{\R}{\ensuremath{\mathbb{R}}}

\newcommand{\N}{\ensuremath{\mathbb{N}}}

\newcommand{\E}{\ensuremath{\mathbb{E}}}

\newcommand{\ind}{\ensuremath{\mathbbm{1}}}
\newcommand{\defeq}{\ensuremath{\vcentcolon}=}
\newcommand{\eps}{\epsilon}

\newcommand{\PP}{\ensuremath{\mathbb{P}}}

\newcommand{\ddd}{\ensuremath{\text{d}}}

\newcommand{\vertiii}[1]{\vert\kern-0.25ex\vert\kern-0.25ex\vert #1 
    \vert\kern-0.25ex\vert\kern-0.25ex\vert}

\begin{document}

\title{The stochastic Fisher-KPP Equation with seed bank and on/off branching coalescing Brownian motion}

\author{Jochen Blath}
\address{Technische Universit\"at Berlin}
\curraddr{Strasse des 17. Juni 136, 10623 Berlin}
\email{blath@math.tu-berlin.de}
\thanks{}

\author{Matthias Hammer}
\address{Technische Universit\"at Berlin}
\curraddr{Strasse des 17. Juni 136, 10623 Berlin}
\email{hammer@math.tu-berlin.de}
\thanks{}

\author{Florian Nie*}
\address{Technische Universit\"at Berlin}
\curraddr{Strasse des 17. Juni 136, 10623 Berlin}
\email{nie@math.tu-berlin.de*}
\thanks{}


\keywords{Fisher-Kolmogoroff-Petrovski-Piscounov, traveling wave, duality, dormancy, seed bank, on/off branching Brownian motion, delay spde\\
*\textit{Corresponding author e-mail address}: nie@math.tu-berlin.de
}

\date{\today}

\begin{abstract}
We introduce a new class of stochastic partial differential equations (SPDEs) with seed bank modeling the spread of a beneficial allele in a spatial population where individuals may switch between an active and a dormant state. Incorporating dormancy and the resulting seed bank leads to a two-type coupled system of equations with migration between both states. We first discuss existence and uniqueness of seed bank SPDEs and provide an equivalent delay representation that allows a clear interpretation of the age structure in the seed bank component. The delay representation will also be crucial in the proofs. Further, we show that the seed bank SPDEs give rise to an interesting class of ``on/off”-moment duals. In particular, in the special case of the F-KPP Equation with seed bank, the moment dual is given by an ``on/off-branching Brownian motion”. This system differs from a classical branching Brownian motion in the sense that independently for all individuals, motion and branching may be ``switched off" for an exponential amount of time after which they get ``switched on" again. On/off branching Brownian motion shows qualitatively different behaviour to classical branching Brownian motion and is an interesting object for study in itself. Here, as an application of our duality, we show that the spread of a beneficial allele, which in the classical F-KPP Equation, started from a Heaviside intial condition, evolves as a pulled traveling wave with speed $\sqrt{2}$, is slowed down significantly in the corresponding seed bank F-KPP model. In fact, by computing bounds on the position of the rightmost particle in the dual on/off branching Brownian motion, we obtain an upper bound for the speed of propagation of the beneficial allele given by $\sqrt{\sqrt{5}-1}\approx 1.111$ under unit switching rates. This shows that seed banks will indeed slow down fitness waves and preserve genetic variability, in line with intuitive reasoning from population genetics and ecology.
\end{abstract}

\maketitle

\section{Introduction and main results}
\label{sec:intro}

\subsection{Motivation}

One of the most fundamental models in spatial population genetics and ecology, describing the spread of a beneficial allele subject to directional selection, was introduced by Fisher in \cite{F37}. Denoting by $p(t,x)\in [0,1]$ the frequency of the advantageous allele at time $t\geq 0$ and spatial position $x \in \R$, and assuming diffusive migration of individuals (described by the Laplacian), Fisher considered the partial differential equation 
\begin{equation}
\label{eq:F-KPP}
    \partial_t p(t,x) = \frac{\Delta}{2} p(t,x) -p(t,x)^2 + p(t,x).
\end{equation}
The same system was independently investigated around the same time by Kolmogorov, Petrovsky, and Piscounov in \cite{KPP37}, and thus the above PDE is now commonly known (and abbreviated) as F-KPP Equation, see e.g.\ \cite{bovier_2016} for a recent overview. It is well known that there exists a so called \textit{travelling wave solution} with speed $\sqrt{2}$  meaning that there exists a function $w$ such that
\begin{align}
    p(t,x)=w(x-\sqrt{2}t)
\end{align}
solves \eqref{eq:F-KPP}. Much finer results about the asymptotic behaviour of the wave-speed and the shape of the function $w$ are known (see e.g.\ \cite{B83}, \cite{LS87}, \cite{R13}), and the F-KPP Equation and its extensions with different noise terms are still an active field of research (see e.g.\ \cite{K17}, \cite{M19}). A very interesting feature of the F-KPP Equation and a main reason for the amenability of its analysis is given by the fact that the solution to \eqref{eq:F-KPP} is {\em dual} to branching Brownian motion (BBM), as was shown by McKean \cite{McK75} (and earlier by Ikeda, Nagasawa and Watanabe \cite{INW69}). Indeed, starting in a (reversed) Heaviside initial condition given by $p(0,\cdot):= {\bf 1}_{]-\infty, 0]}$, we have the probabilistic representation
\begin{align}
    p(t,x)=1-\PP_0(R_t \leq x),
\end{align}
where $(R_t)_ {t\geq 0}$ is the position of the rightmost particle of a (binary) branching Brownian motion with branching rate 1, started with a single particle in $0$. Bramson \cite{B78} then also showed that the rightmost particle of this system thus governs the asymptotic wave-speed of the original equation via the equality
\begin{align} \label{eq: AsymptoticWaveSpeedF-KPP}
    \lim_{t \to \infty} \frac{R_t}{t}= \sqrt{2}.
\end{align}
Since the days of Fisher, mathematical modeling in population genetics has expanded rapidly, and many additional ``evolutionary forces'' have been incorporated into the above model. For example, one may include mutations between alleles and a ``Wright-Fisher noise'' as a result of random reproduction, leading to the system 
\begin{align}
\label{eq:F-KPP_with_noise_and_forces}
\partial_t p(t,x) = & \frac{\Delta}{2} p(t,x) + m_1(1- p(t, x)) - m_2 p(t, x)\notag 
+sp(t,x)(1-p(t,x)) \\
&\quad + \sqrt{\nu p(t,x)(1-p(t,x))} \dot W(t,x).
\end{align}
Here $m_1\geq 0$ and $m_2\geq 0$ are the mutation rates to and from the beneficial allele, $s\geq 0$ denotes the strength of the selective advantage of the beneficial allele, $\nu \geq 0$ governs the variance of the reproductive mechanism and $W=(W(t,x))_{t \geq 0, x \in \R}$ denotes a Gaussian white noise process. The Wright-Fisher noise term is the standard null-model of population genetics, in the non-spatial setting corresponding to an ancestry governed by the Kingman-coalescent \cite{K82}. A justification for its use in population genetics can be found in \cite{M19}. \\
\noindent From a biological point of view  one may think of a one-dimensional habitat modeled by $\R$ on which two types (or species) compete for limited resources. The Heaviside initial condition (induced perhaps by some initial spatial barrier separating the two interacting types) admits a detailed analysis of the impact of the selective advantage of the beneficial type on its propagation in space (see e.g.\ \cite{SN97}).

\medskip

\noindent Recently, an additional evolutionary mechanism has drawn considerable attention in population genetics. Indeed, {\em dormancy}, and, as a result, {\em seed banks}, are both ubiquitous in microbial species as well as crucial for an understanding of their evolution and ecology (see e.g.\ \cite{LJ11}, \cite{SL18}). Corresponding discrete-space population genetic models have recently been studied in \cite{g20} and non-spatial models, where dormancy and resuscitation are modeled in the form of classical migration between an active and an inactive state, have been derived and investigated in \cite{BEGCKW15} and \cite{BGCKW16} (these papers also provide biological background and motivation). There, the population follows a two-dimensional ``seed bank diffusion'', given by the system of SDEs
\begin{align}
\label{eq:seed_bank_diffusion}
    \ddd p(t) &= c(q(t)-p(t)) + \sqrt{p(t)(1-p(t))} \ddd B(t), \notag \\
    \ddd q(t)  &= c'(p(t)-q(t))
\end{align}
where $p$ describes the frequency of the allele under consideration in the active population, and $q$ its frequency in the dormant population. The constants $c,c'>0$ represent the switching rates between the active and dormant states, respectively, and $(B(t))_{t\geq 0}$ is a standard Brownian motion. Though reminiscent of Wright's two island model (cf.\ \cite{BBGCWB18+}), the system above exhibits quite unique features. For example, it is dual to an ``on/off''-coalescent (instead of the Kingman coalescent), in which lines may be turned on and off with independent exponential rates given by $c$ and $c'$. Lines which are turned off are prevented from coalescences. Note that this structure also appears in the context of meta-population models from ecology, see \cite{LM15}. It can be shown that this new ``seed bank coalescent'' does not come down from infinity and exhibits qualitatively prolonged times to the most recent common ancestor \cite{BGCKW16}. A further interesting feature is that the above system exhibits a long-term memory, which can be well understood in a delay SDE reformulation obtained in (\cite[Prop.\ 1.4]{BBGCWB18+}).
Assume starting frequencies $p_0=x\in [0,1], q_0=y \in [0,1]$ and for simplicity $c=c'=1$. Then, the solution to \eqref{eq:seed_bank_diffusion} is a.s.\ equal to the unique strong solution of the stochastic delay differential equations
\begin{align*}
\partial_t p(t)  &= \Big( y e^{-t} + \int_0^t e^{-(t-s)}  p(s)  {\rm{d}}s  - p(t) \Big)  {\rm{d}} t + \sqrt{p(t)(1-p(t)}{\rm{d}}B_t, \notag \\
\partial_t q(t)  &= \Big( -y e^{-t} - \int_{0}^{t} e^{-(t-s)}  p(s) {\rm{d}}s + p(t) \Big) {\rm{d}} t\notag 
\end{align*}
with the same initial condition. The result rests on the fact that there is no noise in the second component and can be proved by a integration-by-parts argument. The second component is now just a deterministic function of the first.

\medskip

\noindent It appears natural to incorporate the above seed bank components into a F-KPP framework in order to analyse the combined effects of seed banks, space and directional selection. We will thus investigate systems of type
\begin{align*} 
    \partial_t p(t,x) &= c (q(t,x)-p(t,x))+\frac{\Delta}{2} p(t,x) + s  (p(t,x)-p^2(t,x))  \nonumber\\
    &\quad+ m_1(1- p(t,x))- m_2 p(t,x) +  \sqrt{ \nu p(t,x)(1-p(t,x))} \dot W (t,x), \nonumber\\
    \partial_t q(t,x) &= c'(p(t,x) -q(t,x))
\end{align*}
where $c,c'\geq 0$ are the switching rates between active and dormant states, $s\geq 0$ is the selection parameter, $\nu\geq 0$ the reproduction parameter and $m_1,m_2 \geq 0$ are the mutation parameters. One may view this as a continuous stepping stone model (cf.\ \cite{S88}) with seed bank. Due to technical reasons, which will become clear in Section \ref{sec:Uniqueness_and_duality}, it is actually advantageous for us to consider in the following the process 
$$(u,v)\defeq (1-p,1-q) $$
satisfying the system
\begin{align} \label{eq:SteppingStoneWithSeedbank}
    \partial_t u(t,x) &= c (v(t,x)-u(t,x))+\frac{\Delta}{2} u(t,x) + s  (u^2(t,x)-u(t,x))  \nonumber\\
    &\quad- m_1 u(t,x)+ m_2 (1-u(t,x)) +  \sqrt{ \nu u(t,x)(1-u(t,x)} \dot W (t,x), \nonumber\\
    \partial_t v(t,x) &= c'(u(t,x) -v(t,x))
\end{align}
instead. We expect to see the on/off mechanism of \eqref{eq:seed_bank_diffusion} emerge also in the dual of the above system. In particular, in the F-KPP Equation with seed bank 
given by 
\begin{align} 
\partial_t u(t,x) = & c (v(t,x)-u(t,x))+ \frac{\Delta}{2} u(t,x)  -su(t,x)(1-u(t,x)), \notag\\
    \partial_t v(t,x) =& c'(u(t,x) -v(t,x))\label{eq:F-KPP:with_seed_bank}
\end{align}
we expect to obtain an ``on/off branching Brownian motion'' with switching rates $c,c'$ and branching rate $s$ as a moment dual.
 Further we aim to derive a delay representation for the above SPDE and hope to get at least partial information about the wave speed of a potential traveling wave solution. Intuition from ecology suggests that the spread of the beneficial allele should be slowed down due to the presence of a seed bank. However, we also expect new technical problems, since the second component $v(t,x)$ comes without the Laplacian, so that all initial roughness of $v_0$ will be retained for all times, preventing jointly continuous solutions.

\subsection{Main results}
\noindent In this section, we  summarize the main results of this paper.
We begin by showing that our Equation \eqref{eq:SteppingStoneWithSeedbank} is well-defined, i.e.\ we establish weak existence, uniqueness and boundedness of solutions. This is done via the following theorems:
\begin{thm} \label{thm: existence}
The SPDE given by Equation \eqref{eq:SteppingStoneWithSeedbank}
for $s,c,c',m_1,m_2, \nu\geq 0$ with initial conditions $(u_0,v_0) \in B(\R,[0,1]) \times B(\R,[0,1])$ has a weak solution $(u,v)$ (in the sense of Definition \ref{def: weaksolution} below) with paths taking values in $C(]0,\infty[,C(\R,[0,1])) \times C([0,\infty [, B(\R,[0,1]))$.
\end{thm}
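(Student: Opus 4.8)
The plan is to construct the weak solution via an approximation scheme and a tightness/martingale-problem argument, following the classical approach for one-dimensional SPDEs with Hölder-continuous (square-root) coefficients (as in Shiga's work on the stepping stone model), while carefully handling the second component $v$, which has no smoothing. First I would observe that the pair $(u,v)$ can be decoupled in the following sense: since $\partial_t v = c'(u-v)$ is a linear ODE driven by $u$, we have the explicit representation $v(t,x) = e^{-c't}v_0(x) + c'\int_0^t e^{-c'(t-r)}u(r,x)\,\mathrm{d}r$. Substituting this into the $u$-equation turns \eqref{eq:SteppingStoneWithSeedbank} into a single (non-local in time) SPDE for $u$ alone; this is essentially the delay representation announced in the introduction. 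So it suffices to produce a weak solution $u$ with paths in $C(\,]0,\infty[\,,C(\R,[0,1]))$, and then \emph{define} $v$ by the above formula, which automatically lies in $C([0,\infty[\,,B(\R,[0,1]))$ and inherits the $[0,1]$-bound from $u$.

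Next I would set up a standard approximation: mollify the initial data, replace the square-root diffusion coefficient $\sigma(u)=\sqrt{\nu u(1-u)}$ by globally Lipschitz, bounded $\sigma_n$ agreeing with $\sigma$ on $[1/n,1-1/n]$ (or use the smooth SPDE with an added small elliptic/Laplacian regularization and a Lipschitz truncation), and solve the resulting SPDEs by Picard iteration in the mild formulation with the heat semigroup $(S_t)$. For each $n$ one gets a unique solution $u^{(n)}$, and—crucially—one must show $u^{(n)}$ stays in $[0,1]$: this follows from a comparison argument, comparing against the constant solutions $0$ and $1$, using that the drift $c(v-u) - su(1-u) - m_1 u + m_2(1-u)$ points inward at the boundary $\{u=0\}$ and $\{u=1\}$ (here the sign conventions of the $u$-equation, obtained by passing to $1-p$, are what make this work, which is the "technical reason" flagged in the text). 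Since $u^{(n)}$ takes values in $[0,1]$, on that range $\sigma_n=\sigma$, so no truncation is actually felt, and the bound is uniform in $n$.

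Then I would prove tightness of $(u^{(n)})$ in $C(\,]0,\infty[\,,C(\R,[0,1]))$ — i.e.\ on any $[\delta,T]\times[-K,K]$ — via Kolmogorov-type moment estimates on space-time increments of the mild form: the heat-kernel term gives the usual $\tfrac14^-$ Hölder regularity in space and $\tfrac18^-$ in time once $t>0$, and the drift terms (including the non-local $\int_0^t e^{-c'(t-r)}u(r,\cdot)\,\mathrm{d}r$ term, which is bounded and in fact Lipschitz in $t$) are harmless because $u^{(n)}$ is uniformly bounded. Passing to a weakly convergent subsequence and using a Skorokhod representation, I would identify the limit $u$ as a weak solution of the martingale problem / mild equation associated to \eqref{eq:SteppingStoneWithSeedbank}: the stochastic integral term is handled by showing the limiting orthogonal martingale measure has the correct covariance $\nu u(1-u)$, which passes to the limit by boundedness and continuity of $\sigma^2$ (no need for uniqueness here, since the theorem only claims existence). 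Finally I would note that at $t=0$ the solution matches $u_0$ only in a weak (distributional) sense — hence the open interval $]0,\infty[$ in the path space for $u$ — while $v$ is genuinely continuous up to $t=0$ with $v(0,\cdot)=v_0$.

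The main obstacle I expect is two-fold: first, the \emph{regularity bookkeeping for the non-local drift term} — one must verify that replacing the instantaneous coupling $cv$ by the integral functional of the past of $u$ does not destroy the moment estimates needed for tightness and for the Kolmogorov continuity criterion; since the kernel $e^{-c'(t-r)}$ is bounded and $u$ is bounded, this term is actually Lipschitz in time and spatially as regular as the spatial average of $u$, so it should be controllable, but it requires care. Second, and more subtly, one must be honest about the \emph{lack of joint continuity}: $v_0 \in B(\R,[0,1])$ is merely bounded (possibly a Heaviside function), so $v(t,\cdot)$ is never better than bounded, and one has to check that the mild/martingale formulation still makes sense with such a rough $v$ appearing in the $u$-drift — this is exactly why the decoupling into the delay form is essential, as it shows the $u$-drift depends on $v$ only through the bounded, time-averaged quantity $\int_0^t e^{-c'(t-r)}u(r,x)\,\mathrm{d}r$ plus the bounded term $e^{-c't}v_0(x)$, both of which are fine inside the heat-kernel convolution.
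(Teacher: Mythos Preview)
Your proposal is correct and follows essentially the same route as the paper: reduce to a single equation in $u$ via the variation-of-constants formula for $v$, approximate $\sigma$ by Lipschitz $\sigma_n$, use a comparison argument to trap the approximants in $[0,1]$, obtain tightness via Kolmogorov moment bounds on the mild form, and identify the limit through the associated martingale problem before recovering $v$ from the delay formula. The only minor deviation is that the paper does \emph{not} mollify the initial data but instead subtracts the deterministic heat-flow $\hat u_0(t,x)=\int G(t,x,y)u_0(y)\,\ddd y$ from $u_n$ to get a process that is continuous down to $t=0$ (and hence lives in a Polish space where tightness can be argued), which is precisely the mechanism behind the open interval $]0,\infty[$ in the path space for $u$.
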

\noindent Here, for Banach spaces $X$ and $Y$ we denoted by $B(X,Y)$ the space of bounded, measurable functions on $X$ taking values in $Y$ and by $C(X,Y)$ the space of continuous functions on $X$ taking values in $Y$. We usually suppress the dependence on the image space whenever our functions are real-valued and equip both spaces with the topology of locally uniform convergence.

\begin{thm} \label{thm:Uniqueness in law}
Under the conditions of Theorem \ref{thm: existence}, the SPDE \eqref{eq:SteppingStoneWithSeedbank} exhibits uniqueness in law
on $C(]0,\infty[,C(\R,[0,1])) \times C([0,\infty [, B(\R,[0,1]))$.
\end{thm}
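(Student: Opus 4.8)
The plan is to establish uniqueness in law for \eqref{eq:SteppingStoneWithSeedbank} via the moment duality announced in the introduction, combined with the delay representation. The key observation is that the second component $v$ carries no noise, so once the law of $(u(t,\cdot))_{t>0}$ is pinned down, $v$ is recovered deterministically through the formula $v(t,x) = e^{-c't}v_0(x) + c'\int_0^t e^{-c'(t-s)}u(s,x)\,\mathrm{d}s$ appearing in the delay reformulation (Theorem \ref{thm:ContinuityAndDelay}/Proposition \ref{prop: Delay}). Thus it suffices to show that the finite-dimensional distributions of $u$ are determined, and for this I would use that the mixed moments $\mathbb{E}\big[\prod_j u(t,x_j)\cdot\prod_k v(t,y_k)\big]$ are characterized by the dual on/off branching-coalescing Brownian motion with death (Theorem \ref{thm: MomentDuality}): the duality identity expresses every such moment as an expectation over the dual particle system, whose law does not depend on the chosen solution. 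Since $u,v$ take values in $[0,1]$, these mixed moments determine the joint law of $(u(t,x_1),\dots,u(t,x_n),v(t,y_1),\dots,v(t,y_m))$ by the moment problem on a bounded rectangle, and hence the law of the solution on a suitable path space.

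More concretely, I would carry out the argument in the following order. First, fix any weak solution $(u,v)$ as produced by Theorem \ref{thm: existence}; its paths lie in $C(]0,\infty[,C(\R,[0,1]))\times C([0,\infty[,B(\R,[0,1]))$. Second, invoke the duality (Theorem \ref{thm: MomentDuality}) to write, for every finite collection of space points and every $t>0$, the mixed moment of the solution at time $t$ as an expectation functional of the dual process started from the corresponding initial configuration; crucially the right-hand side is a fixed quantity independent of which solution we picked. Third, run the standard moment-problem argument: on $[0,1]^{n+m}$ the collection of all monomials is measure-determining (Stone--Weierstrass plus a monotone class / Riesz representation argument), so the joint distribution of $(u(t,x_i))_i$ and $(v(t,y_j))_j$ is uniquely determined for each fixed $t>0$. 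Fourth, upgrade from fixed-time marginals to finite-dimensional distributions in time: here I would appeal to the Markov property of the solution (which follows once uniqueness of the fixed-time marginals is known, via a standard argument, or alternatively by extending the duality to a time-inhomogeneous dual evaluated along several time layers), so that the law on the path space is determined by the one-time marginals together with the transition mechanism, both of which are now fixed. Fifth, use continuity of the paths (on $]0,\infty[$ for $u$, on $[0,\infty[$ for $v$) to pass from finite-dimensional distributions to the law on $C(]0,\infty[,C(\R,[0,1]))\times C([0,\infty[,B(\R,[0,1]))$, noting that the topology of locally uniform convergence is generated by countably many evaluation maps so that the Borel $\sigma$-algebra is the one generated by finite-dimensional projections.

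The main obstacle I anticipate is the treatment of the second component $v$ and the initial time $t=0$. Because $v$ lives only in $B(\R,[0,1])$ and retains the full roughness of $v_0$, one cannot hope to identify $v(t,\cdot)$ pointwise from continuity alone; instead one must route everything through the delay representation, which gives $v(t,x)$ as an explicit (deterministic, measurable) functional of the path $s\mapsto u(s,x)$ on $[0,t]$, and then argue that this functional is measurable with respect to the appropriate $\sigma$-algebra so that the law of $v$ is a continuous image of the law of $u$. A second, more technical, point is ensuring the duality identity from Theorem \ref{thm: MomentDuality} holds simultaneously for mixed moments involving both $u$ and $v$ (not just $u$ alone), since it is the mixed moments that are needed to pin down the joint law; if the duality as stated only controls moments of $u$, one falls back on the delay formula to express $v$-moments in terms of iterated $u$-moments, which again reduces to the first case. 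Finally, some care is needed with the degenerate regime $\nu=0$ (the F-KPP case \eqref{eq:F-KPP:with_seed_bank}), where the equation is deterministic and uniqueness in law collapses to pathwise uniqueness for the PDE system — but this is the easier end of the parameter range and follows from a Gronwall estimate using the local Lipschitz property of the nonlinearity on $[0,1]$.
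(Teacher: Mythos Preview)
Your proposal is correct and follows essentially the same route as the paper: moment duality (Theorem~\ref{thm:duality}) pins down the mixed moments at each fixed time, the Hausdorff moment problem on $[0,1]^{n+m}$ upgrades this to the one-time marginals, and the standard martingale-problem argument (one-dimensional marginals determine finite-dimensional marginals) together with the cylindrical $\sigma$-algebra yields uniqueness on the path space. The one technical refinement in the paper that you only anticipate in your discussion of obstacles is the explicit subtraction of the deterministic parts: setting $\tilde u(t,x)=u(t,x)-\int G(t,x,y)u_0(y)\,\mathrm{d}y$ and $\tilde v(t,x)=v(t,x)-e^{-c't}v_0(x)$ lands both components in $C([0,\infty[,C(\R))$, where the Borel $\sigma$-algebra is generated by point evaluations and the passage from moments to law is entirely standard; your worry about the roughness of $v$ in the space variable is thus resolved not by a separate measurability argument for the delay functional but simply by peeling off the non-continuous piece, which is deterministic. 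Also, the duality as stated already yields the full mixed moments in $u$ and $v$ (dormant particles contribute $v$-factors), so your fallback via the delay formula, while valid, is not needed.
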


\noindent Note that it turns out that the absence of a Laplacian in  the second equation in \eqref{eq:SteppingStoneWithSeedbank} gives rise to technical difficulties regarding existence and uniqueness. However, as in the seed bank diffusion case, a reformulation as a stochastic partial delay differential equation is possible allowing one to tackle these issues. To our knowledge, this is a new application of a delay representation in this context, and a detailed explanation of this approach can be found in Section \ref{sec:Existence}.
\begin{prop} \label{prop: Delay}
The Equation \eqref{eq:SteppingStoneWithSeedbank} is equivalent to the Stochastic Partial Delay Differential Equation (SPDDE)
    \begin{align}
    \partial_t u(t,x) &= c \left(e^{-c't}v_0(x)+ c' \int_0^t e^{-c'(t-s)} u(s,x) \, \ddd s-u(t,x)\right)+\frac{\Delta}{2} u(t,x)-m_1 u(t,x)\nonumber\\
    &\quad +m_2 (1-u(t,x)) - s u(t,x)(1-u(t,x))  +\sqrt{\nu (1-u(t,x))u(t,x)} \dot W(t,x),\nonumber\\
    \partial_t  v(t,x) &= c' \left(u(t,x)-e^{-c't}v_0(x)-e^{-c't} c' \int_0^t e^{c's} u(s,x) \, \ddd s \right)\label{eq:SteppingStoneWithSeedbankDelay}
    \end{align}
    in the sense that under the same initial conditions solutions of \eqref{eq:SteppingStoneWithSeedbank} are also solutions of \eqref{eq:SteppingStoneWithSeedbankDelay} and vice versa.
\end{prop}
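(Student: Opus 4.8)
The plan is to focus entirely on the second component, since the claimed equivalence is really a statement that the ODE $\partial_t v = c'(u-v)$ can be solved explicitly in terms of the first component $u$, and that substituting this explicit expression for $v$ into the $u$-equation produces exactly the delay equation in the statement. Concretely, I would proceed as follows. First, fix a realization of $u$ (for a weak solution, work $\omega$-wise on the event of full probability where $u$ has the regularity asserted in Theorem \ref{thm: existence}, so that $t\mapsto u(t,x)$ is continuous for $t>0$ and bounded). For each fixed $x\in\R$, the second equation of \eqref{eq:SteppingStoneWithSeedbank},
\[
\partial_t v(t,x) = c'\bigl(u(t,x)-v(t,x)\bigr),\qquad v(0,x)=v_0(x),
\]
is a linear inhomogeneous first-order ODE in $t$. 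Its unique solution is obtained by the integrating factor $e^{c't}$: writing $\partial_t\bigl(e^{c't}v(t,x)\bigr)=c'e^{c't}u(t,x)$ and integrating from $0$ to $t$ yields
\[
v(t,x) = e^{-c't}v_0(x) + c'\int_0^t e^{-c'(t-s)}u(s,x)\,\ddd s.
\]
This is exactly the bracketed expression appearing in the first line of \eqref{eq:SteppingStoneWithSeedbankDelay} (and, up to rewriting $e^{-c'(t-s)}=e^{-c't}e^{c's}$, in the second line). Substituting this identity for $v(t,x)$ into the first equation of \eqref{eq:SteppingStoneWithSeedbank} turns the coupled system into the SPDDE \eqref{eq:SteppingStoneWithSeedbankDelay}, establishing the direction "\eqref{eq:SteppingStoneWithSeedbank} $\Rightarrow$ \eqref{eq:SteppingStoneWithSeedbankDelay}".

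For the converse direction, I would start from a solution $(u,v)$ of \eqref{eq:SteppingStoneWithSeedbankDelay} with the same initial data and show it solves \eqref{eq:SteppingStoneWithSeedbank}. Here the observation is that in \eqref{eq:SteppingStoneWithSeedbankDelay} the second component is given by an explicit formula, $v(t,x)=e^{-c't}v_0(x)+c'\int_0^t e^{-c'(t-s)}u(s,x)\,\ddd s$ (which is how $\partial_t v$ is defined there, after noting $\partial_t v = c'(u - v)$ is precisely the time-derivative of that formula by the Leibniz rule); conversely differentiating this explicit $v$ in $t$ recovers $\partial_t v(t,x)=c'\bigl(u(t,x)-v(t,x)\bigr)$, which is the second equation of \eqref{eq:SteppingStoneWithSeedbank}. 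Plugging the same explicit expression back into the first line of \eqref{eq:SteppingStoneWithSeedbankDelay} recognizes the delay term $c\bigl(e^{-c't}v_0(x)+c'\int_0^t e^{-c'(t-s)}u(s,x)\,\ddd s - u(t,x)\bigr)$ as $c(v(t,x)-u(t,x))$, recovering the first equation of \eqref{eq:SteppingStoneWithSeedbank}. Thus the two systems have literally the same solution set under matching initial conditions.

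The main technical point to be careful about — and the only place where anything beyond elementary ODE manipulation is needed — is the justification of the integrating-factor computation and the differentiation under the integral sign in the \emph{stochastic} setting, i.e.\ ensuring that the pathwise manipulations above are legitimate for a weak solution in the sense of Definition \ref{def: weaksolution}. Since the second equation carries no noise and no Laplacian, this is genuinely a deterministic, $\omega$-by-$\omega$ and $x$-by-$x$ argument once $u$ is fixed: one needs only that $s\mapsto u(s,x)$ is locally integrable (which follows from boundedness plus measurability, and from continuity on $]0,\infty[$), so that $t\mapsto \int_0^t e^{-c'(t-s)}u(s,x)\,\ddd s$ is absolutely continuous with the expected derivative, and that the identity $v(t,x)=e^{-c't}v_0(x)+c'\int_0^t e^{-c'(t-s)}u(s,x)\,\ddd s$ holds for all $t$, all $x$, almost surely. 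This mirrors exactly the integration-by-parts argument used for the seed bank diffusion in \cite[Prop.\ 1.4]{BBGCWB18+}, as referenced in the introduction, and the fact that $u$ does not enjoy continuity at $t=0$ is immaterial because the $v$-equation only involves a time integral of $u$.
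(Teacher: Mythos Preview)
Your proposal is correct and follows essentially the same approach as the paper: both fix $x\in\R$, treat the second component as a linear ODE in $t$ with given $u$, solve it via the integrating factor / variation of constants to obtain $v(t,x)=e^{-c't}v_0(x)+c'\int_0^t e^{-c'(t-s)}u(s,x)\,\ddd s$, and then substitute this expression into the first equation to pass between the two formulations. The paper frames the verification as an integration-by-parts computation on $e^{c't}v(t,x)$ rather than your Leibniz-rule differentiation, but this is a cosmetic difference only.
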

\begin{remark}
Proposition \ref{prop: Delay} gives rise to an elegant interpretation of the delay term. It shows that the type of any “infinitesimal” resuscitated individual is determined by the active population present an exponentially  distributed  time  ago (with a cutoff at time 0), which  the  individual spent dormant in the seed bank (cf. Proposition 1.4. in \cite{BBGCWB18+}).
\end{remark}
\noindent Another major tool needed for deriving the 
uniqueness result is the powerful duality technique, i.e.\ we prove a moment duality with an ``on/off branching coalescing Brownian motion" (with killing) which as in \cite{AT00} we define slightly informally as follows. For a rigorous construction, we refer the reader to the killing and repasting procedure of Ikeda, Nagasawa and Watanabe (cf.\ \cite{INW68a}, \cite{INW68b}, \cite{INW69}) or \cite{A98}. Note also that the introduction of the on/off-mechanism will lead, as in the on/off-coalescent case in \cite{BGCKW16}, to an extension of the state space allowing each particle to carry an active or dormant marker.
\begin{defn} \label{defn:dualprocess}
We denote by $M=(M_t)_{t \geq 0}$ an on/off branching coalescing Brownian motion with killing taking values in $\bigcup_{k \in \N_0} \left(\R \times \lbrace \boldsymbol{a},\boldsymbol{d}\rbrace \right)^k$ starting at $M_0= ((x_1,\sigma_1), \cdots, (x_n,\sigma_n ))\in\left(\R \times \lbrace \boldsymbol{a}, \boldsymbol{d}\rbrace\right)^n$ for some $n \in \N$. Here the marker $\boldsymbol{a}$ (resp. $\boldsymbol{d}$) means that the corresponding particle is active (resp. dormant).
The process evolves according to the following rules:
\begin{itemize}
    \item Active particles, i.e.\ particles with the marker $\boldsymbol{a}$, move in $\R$ according to independent Brownian motions, die at rate $m_2$ and branch into two active particles at rate $s$.
    \item Pairs of active particles coalesce according to the following mechanism:
    \begin{itemize}
        \item We define for each pair of particles labelled $(\alpha, \beta)$ their intersection local time $L^{\alpha, \beta}=(L^{\alpha , \beta}_t)_{t \geq 0}$  as the local time of $M^\alpha-M^\beta$ at $0$ which we assume to only increase whenever both particles carry the marker $\boldsymbol{a}$.
        \item Whenever the intersection local time exceeds the value of an independent exponential clock with rate $\nu/2$, the two involved particles coalesce into a single particle.
    \end{itemize}
    \item Independently, each active particle switches to a dormant state at rate $c$ by switching its marker from $\boldsymbol{a}$ to $\boldsymbol{d}$.
    \item Dormant particles do not move, branch, die or coalesce.
    \item Independently, each dormant particle switches to an active state at rate $c'$ by switching its marker from $\boldsymbol{d}$ to $\boldsymbol{a}$.
\end{itemize}
Moreover, denote by $I=(I_t)_{t\geq 0}$ and $J=(J_t)_{t \geq 0}$ the (time dependent) index set of active and dormant particles of $M$, respectively, and let $N_t$ be the random number of particles at time $t\geq 0$ so that $M_t=(M^1_t, \ldots , M^{N_t}_t)$. For example, if for $t\geq 0$ we have 
$$M_t=((M^1_t,\boldsymbol{a}), (M^2_t,\boldsymbol{d}),(M^3_t,\boldsymbol{a}),  (M^4_t, \boldsymbol{a})),$$
then 
$$I_t=\lbrace1,3,4 \rbrace,\, J_t=\lbrace 2 \rbrace,\, N_t=4.$$
\end{defn}
\begin{remark} \label{rem:specialcases}
For future use we highlight the following special cases of the process $M$. They admit the same mechanisms as described in Definition \ref{defn:dualprocess} except for those where we set the rate to $0$:
\begin{itemize}
    \item $m_1=m_2=0$: $M$ is called an on/off branching coalescing Brownian motion (without killing) or on/off BCBM.
    \item $m_1=m_2=\nu=0$: $M$ is called an on/off branching Brownian motion (without killing) or on/off BBM.
    \item $m_1=m_2=s=0$: $M$ is called an on/off coalescing Brownian motion (without killing) or on/off CBM.
    \item $m_1=m_2=\nu=s=0$: $M$ is called an on/off Brownian motion (without killing) or on/off BM.
\end{itemize}
\end{remark}
\noindent 
We have the following moment duality for the process $M$ of Definition \ref{defn:dualprocess} which uniquely determines the law of the solution of the system \eqref{eq:SteppingStoneWithSeedbank}.
\begin{thm} \label{thm:duality}
 Let $(u,v)$ be a solution to the system \eqref{eq:SteppingStoneWithSeedbank} with initial conditions $u_0,v_0 \in B (\R,[0,1])$. Then we have for any initial state $M_0 =((x_1, \sigma_1), \ldots , (x_n, \sigma_n)) \in \left(\R \times \lbrace \boldsymbol{a}, \boldsymbol{d} \rbrace\right)^n$, $n\in \N$, and for any $t \geq 0$
 \begin{align*}
     \E\left[ \prod_{\beta \in I_0} u(t,M^\beta_0)  \prod_{\gamma \in J_0} v(t, M^\gamma_0) \right]&= \E\left[ \prod_{\beta \in I_t} u_0(M^\beta_t)  \prod_{\gamma \in J_t} v_0( M^\gamma_t) e^{-m_1 \int_0^t \abs{I_s} \, \ddd s}\right].
 \end{align*}
\end{thm}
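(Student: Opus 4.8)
The plan is to establish the moment duality by the standard generator-comparison method for SPDE--particle-system dualities (as in \cite{AT00}, and, for the seed bank layer, as in \cite{BGCKW16}), combined with an approximation procedure. For a state $m=((x_1,\sigma_1),\dots,(x_k,\sigma_k))$ of $M$ introduce the duality function
\[
H\big((u,v),m\big)\defeq \prod_{i:\,\sigma_i=\boldsymbol{a}} u(x_i)\ \prod_{i:\,\sigma_i=\boldsymbol{d}} v(x_i),
\]
so that the left-hand side of the asserted identity equals $\E\big[H((u_t,v_t),M_0)\big]$ and the right-hand side equals $\E\big[H((u_0,v_0),M_t)\,e^{-m_1\int_0^t|I_s|\,\ddd s}\big]$, where $M$ and the driving noise $W$ are realized on a common probability space with $M$ independent of $W$. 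Fixing $t>0$, it then suffices to show that
\[
G(r)\defeq \E\!\left[H\big((u_{t-r},v_{t-r}),M_r\big)\,\exp\!\Big(-m_1\!\int_0^r |I_s|\,\ddd s\Big)\right],\qquad r\in[0,t],
\]
is constant in $r$, since $G(0)$ is the left-hand side and $G(t)$ the right-hand side. Note $|G|\le 1$ because $u,v\in[0,1]$, and all the expressions appearing below are integrable since $\E[N_r]\le n\,e^{sr}$.

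The heart of the matter is the (a priori only formal) generator identity
\[
\big(\mathcal{G}_M\,H(\cdot,\cdot)\big)\big((u,v),m\big)\;=\;\big(\mathcal{A}\,H(\cdot,m)\big)(u,v),
\]
where $\mathcal{G}_M$ is the generator of $M$ from Definition \ref{defn:dualprocess} and $\mathcal{A}$ is the generator of \eqref{eq:SteppingStoneWithSeedbank} with the parameter $m_1$ set to $0$, which one verifies term by term: the Laplacian $\tfrac{\Delta}{2}u$ corresponds to the Brownian motions of the active particles; the coupling $c(v-u)$ in the $u$-equation to the switch $\boldsymbol{a}\to\boldsymbol{d}$ at rate $c$ (which turns a factor $u(x_i)$ into $v(x_i)$) and $c'(u-v)$ in the $v$-equation to the switch $\boldsymbol{d}\to\boldsymbol{a}$ at rate $c'$; the reaction term $s(u^2-u)$ to binary branching of active particles at rate $s$ (which turns $u(x_i)$ into $u(x_i)^2$); the term $m_2(1-u)$ to killing of active particles at rate $m_2$ (which turns $u(x_i)$ into $1$); and the white-noise term to the coalescence mechanism, since the Itô correction $\tfrac{\nu}{2}\int_\R\int_\R \frac{\delta^2H}{\delta u(x)\,\delta u(y)}\sqrt{u(x)(1-u(x))}\sqrt{u(y)(1-u(y))}\,\delta(x-y)\,\ddd x\,\ddd y$ produces, for each pair of active particles located at a common point, the change $u(x)^2\mapsto u(x)$ weighted by a Dirac mass in the difference of their positions — exactly coalescence at exponential rate $\nu/2$ against the intersection local time. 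The remaining term $-m_1u$ of \eqref{eq:SteppingStoneWithSeedbank} contributes $-m_1|I(m)|\,H((u,v),m)$ to the generator applied to $H$, a potential term which is precisely why it is compensated by the Feynman--Kac weight $\exp(-m_1\int_0^\cdot|I_s|\,\ddd s)$; this is why the weight appears in $G$.

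Since $\frac{\delta^2H}{\delta u(x)\,\delta u(y)}$ is a sum of products of Dirac masses, the generator identity is only formal, and rigour requires regularization: I would mollify the spatial white noise to a smooth noise $\dot W^\varepsilon$ — so that $u^\varepsilon$ is spatially smooth and $\mathcal{A}^\varepsilon$ acts genuinely on a suitable algebra of functionals of $(u,v)$ — and simultaneously replace the local-time coalescence of $M$ by coalescence at the finite rate $\tfrac{\nu}{2}\int_0^\cdot\phi_\varepsilon(M^\alpha_s-M^\beta_s)\,\ddd s$ built from the same mollifier $\phi_\varepsilon$. For the regularized pair the generator identity holds literally, so $r\mapsto G^\varepsilon(r)$ is $C^1$ with $\frac{d}{dr}G^\varepsilon(r)=0$, by Itô's formula for the SPDE together with Dynkin's formula for the finite-activity jump-diffusion $M^\varepsilon$ and the product rule for the exponential weight. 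Then one lets $\varepsilon\to0$: the mollified solutions converge, $(u^\varepsilon,v^\varepsilon)\to(u,v)$ (via the approximation scheme behind Theorem \ref{thm: existence} together with standard $L^1$-stability estimates for the SPDE), and $M^\varepsilon\Rightarrow M$ (mollified to intersection-local-time coalescing Brownian systems, cf.\ \cite{AT00}); passing to the limit in $G^\varepsilon(0)=G^\varepsilon(t)$ and using $|H|\le1$ yields the claim — modulo a short argument that $M_t$ a.s.\ does not charge the discontinuity sets of $u_0,v_0$, which holds because the positions of the active particles of $M_t$ have densities except for the finitely many (if any) that have been dormant throughout $[0,t]$, at whose fixed starting points one may assume, or reduce to, continuity of the initial data.

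I expect the main obstacle to be precisely this white-noise term: making sense of the product of Dirac masses in the Itô correction as the intersection-local-time coalescence of the dual Brownian particles, and carrying the mollify-then-pass-to-the-limit procedure through so that no mass is created or destroyed as $\varepsilon\to0$; in particular the coalescence constant $\nu/2$ comes out correctly only with the right normalization of the local time of the difference of two Brownian motions. By contrast, the seed bank layer is benign — the extra marker coordinate and the two switching rates $c,c'$ only contribute bounded jump terms, handled exactly as in the on/off-coalescent duality of \cite{BGCKW16} — and the killing and Feynman--Kac contributions are routine. Finally, for the $\mathcal{A}$-side bookkeeping one may equivalently start from the delay form \eqref{eq:SteppingStoneWithSeedbankDelay} of Proposition \ref{prop: Delay}, which makes manifest that the $v$-component carries no driving noise and hence contributes neither a coalescence nor an Itô term.
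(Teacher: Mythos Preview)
Your formal generator matching is correct and is exactly the heuristic underlying the paper's proof; the seed bank, branching, killing and Feynman--Kac bookkeeping are all as you describe. The difference is in how the argument is made rigorous.

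You propose to regularize \emph{both} sides: mollify the white noise so that $u^\eps$ is spatially smooth, and simultaneously replace the local-time coalescence of $M$ by a finite-rate coalescence $M^\eps$ built from the same mollifier; then $G^\eps$ is exactly constant and you pass to the limit on each side separately. The paper instead mollifies only the \emph{solution}: it sets $u_\eps(t,\cdot)=u(t,\cdot)\ast\rho_\eps$, $v_\eps(t,\cdot)=v(t,\cdot)\ast\rho_\eps$ (so $u_\eps,v_\eps$ are genuine semimartingales at each spatial point) and leaves the dual $M$, including its local-time coalescence, \emph{untouched}. One then runs the Ethier--Kurtz device on $k(t,s,\eps)=\E\big[K(s)\prod_{\beta\in I_s}u_\eps(t,M^\beta_s)\prod_{\gamma\in J_s}v_\eps(t,M^\gamma_s)\big]$, computing $\int_0^t[k(r,0,\eps)-k(0,r,\eps)]\,\ddd r$ by substituting the It\^o expansion for the SPDE and the compensator expansion for $M$. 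After Fubini all terms cancel except two mismatches: between the mollified selection term $b_\eps(u,s,\cdot)=\int(1-u)u\,\rho_\eps(\cdot-y)\,\ddd y$ and $u_\eps^2-u_\eps$, and between the mollified quadratic-covariation kernel $\int_\R\rho_\eps(z-M^\beta_r)\rho_\eps(z-M^\delta_r)\,\sigma^2(u(s,z))\,\ddd z$ and the local-time integral on the dual side. Both vanish as $\eps\to0$, the second via a separate lemma using Tanaka's occupation-time formula and the Barlow--Yor local-time bounds; a truncation at the $m$-th birth time is used to keep the number of particles finite during this step. This is more economical than your plan: the dual is never approximated, and the $\eps\to0$ limit reduces to elementary mollification properties on the SPDE side.

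Your route can in principle be carried through, but the convergence step you invoke has a gap. The approximation scheme behind Theorem~\ref{thm: existence} (i.e.\ Theorem~\ref{existencereal}) does \emph{not} mollify the noise; it approximates the non-Lipschitz $\sigma$ by Lipschitz $\sigma_n$ and extracts a weak limit by tightness, which gives no statement about convergence of noise-mollified solutions $u^\eps\to u$ for the original $\sigma$. The only place noise mollification appears in the paper is in the proof of Theorem~\ref{comparison}, and there it is carried out for Lipschitz $\sigma$. So to execute your plan you would have to establish this convergence separately for the Wright--Fisher coefficient (or layer a Lipschitz approximation on top of the noise mollification), and in addition prove the weak convergence $M^\eps\Rightarrow M$ of the finite-rate to the local-time coalescing system. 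None of this is needed in the paper's approach.
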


\noindent Finally, as an application of the preceding results we consider the special case without mutation and noise. This is the F-KPP Equation with seed bank, i.e.\ Equation \eqref{eq:F-KPP:with_seed_bank}. In this scenario the duality relation takes the following form.
\begin{corol} \label{corol:dualityF-KPP}
Let $( u, v)$ be the solution to Equation \eqref{eq:F-KPP:with_seed_bank} with initial condition $u_0=v_0=\ind_{[0,\infty[}$. Then the dual process $M$ is an on/off BBM (see Remark \ref{rem:specialcases}). Moreover, if we start $M$ from a single active particle, the duality relation is given by
\begin{align*}
     u(t,x) = \PP_{(0,\boldsymbol{a})}\left(\max_{\beta \in I_t\cup J_t} M_t^\beta \leq x \right).
\end{align*}
\end{corol}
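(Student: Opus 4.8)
\textit{Proof proposal.} The plan is to specialize the moment duality of Theorem~\ref{thm:duality} to the parameter regime $m_1=m_2=\nu=0$ together with the given Heaviside-type initial data, and then to remove the dependence on the starting point of the dual by exploiting its translation and reflection invariance.

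First I would observe that Equation~\eqref{eq:F-KPP:with_seed_bank} is exactly the system~\eqref{eq:SteppingStoneWithSeedbank} with $m_1=m_2=\nu=0$, since there the selection term reads $s(u^2-u)=-su(1-u)$. Hence Theorem~\ref{thm: existence} provides a solution $(u,v)$, which is non-random because for $\nu=0$ the system~\eqref{eq:F-KPP:with_seed_bank} is a deterministic PDE with a unique solution, so in particular $\E[u(t,x)]=u(t,x)$. By Theorem~\ref{thm:duality} this $(u,v)$ is dual to the process $M$ of Definition~\ref{defn:dualprocess} with these rates, and by Remark~\ref{rem:specialcases} the choice $\nu=0$ switches off coalescence while $m_1=m_2=0$ switches off killing, so $M$ is an on/off BBM; since moreover no particle is ever removed, $N_t\ge 1$ for all $t$, so that $\max_{\beta\in I_t\cup J_t}M_t^\beta$ is always well defined.

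Next I would apply Theorem~\ref{thm:duality} with $n=1$ and $M_0=(x,\boldsymbol{a})$, so that $I_0=\{1\}$, $J_0=\emptyset$ and the factor $\exp(-m_1\int_0^t\abs{I_s}\,\ddd s)$ equals $1$. Substituting $u_0=v_0=\ind_{[0,\infty[}$ then yields
\[
u(t,x)=\E_{(x,\boldsymbol{a})}\!\Big[\prod_{\beta\in I_t}\ind_{[0,\infty[}(M_t^\beta)\prod_{\gamma\in J_t}\ind_{[0,\infty[}(M_t^\gamma)\Big]=\PP_{(x,\boldsymbol{a})}\!\Big(\min_{\beta\in I_t\cup J_t}M_t^\beta\ge 0\Big),
\]
where it is essential that the product ranges over all particles, active \emph{and} dormant: the dormant marker freezes only motion and branching, so $v_0$ is still evaluated at the position a particle held when it last went dormant.

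Finally I would eliminate $x$ via two elementary symmetries of the on/off BBM. As Brownian motion is spatially homogeneous and none of the rates $c,c',s$ depends on position, $M$ started from $(x,\boldsymbol{a})$ has the law of $x+\widetilde M$ with $\widetilde M$ an on/off BBM started from $(0,\boldsymbol{a})$, so the probability above equals $\PP_{(0,\boldsymbol{a})}(\min_{\beta}\widetilde M_t^\beta\ge -x)$. Since Brownian motion is also symmetric under $y\mapsto -y$ (the rates being position-free, and $(0,\boldsymbol{a})$ fixed by this reflection), $-\widetilde M\overset{d}{=}\widetilde M$, which turns $\{\min_{\beta}\widetilde M_t^\beta\ge -x\}$ into $\{\max_{\beta}\widetilde M_t^\beta\le x\}$. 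Chaining the equalities gives $u(t,x)=\PP_{(0,\boldsymbol{a})}(\max_{\beta\in I_t\cup J_t}M_t^\beta\le x)$, which is the assertion. There is no serious obstacle here; the only points requiring a little care are that $\nu=0$ indeed renders $u$ deterministic, that $m_1=m_2=0$ yields an on/off BBM without killing (hence $N_t\ge 1$), and that the dynamics of Definition~\ref{defn:dualprocess} are manifestly invariant under the translation $y\mapsto y+x$ and the reflection $y\mapsto -y$ of space.
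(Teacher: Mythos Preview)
Your proposal is correct and follows essentially the same route as the paper's own proof: specialize the moment duality of Theorem~\ref{thm:duality} to $m_1=m_2=\nu=0$ with a single active starting particle at $x$, plug in the Heaviside data to obtain $u(t,x)=\PP_{(x,\boldsymbol a)}\bigl(\min_{\beta\in I_t\cup J_t}M_t^\beta\ge 0\bigr)$, and then use translation by $x$ followed by the reflection $M\mapsto -M$ to reach the asserted formula. The paper's argument is slightly terser but otherwise identical.
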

\noindent Similar to Bramson's result on the asymptotic speed of the rightmost particle (cf.\ Equation \eqref{eq: AsymptoticWaveSpeedF-KPP}), by means of the preceding duality  we prove an upper bound on the propagation speed of the beneficial allele. 
\begin{thm} \label{thm:Wavespeed_c=1}
For $s=c=c'= 1$, $u_0=v_0=\ind_{[0,\infty[}$ and any $\lambda \geq  \sqrt{\sqrt{5}-1}$ we have that
\begin{align*}
    \lim_{t \to \infty} 1-u(t,\lambda t)=0.
\end{align*}
Moreover, it holds almost surely that
\begin{align*}
    \limsup_{t\to \infty} \frac{R_t}{t} \leq \sqrt{\sqrt{5}-1}
\end{align*}
where $R_t=\sup_{\alpha \in I_t \cup J_t} M_t^\alpha$ is the position of the rightmost particle of the on/off BBM started from a single active particle at $0$. 
\end{thm}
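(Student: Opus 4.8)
The plan is to derive the upper bound on the wavespeed of the on/off BBM by a first-moment (many-to-one) argument, mimicking the classical proof that $\limsup R_t/t \le \sqrt{2}$ for ordinary BBM, but now keeping careful track of the on/off mechanism. The key quantity is the expected number of particles (active or dormant) that lie to the right of $\lambda t$ at time $t$. If we can show this expectation tends to $0$ for $\lambda \ge \sqrt{\sqrt5 - 1}$, then by Markov's inequality $\PP(R_t > \lambda t) \to 0$, which by the duality relation of Corollary \ref{corol:dualityF-KPP} gives $1 - u(t,\lambda t) = \PP_{(0,\boldsymbol{a})}(R_t > \lambda t) \to 0$; the almost-sure statement then follows from a Borel--Cantelli argument along integer times combined with monotonicity/continuity estimates to fill the gaps.

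\emph{First} I would set up the many-to-one lemma for the on/off BBM. Tag a single lineage: it performs Brownian motion while active, is frozen while dormant, switches $\boldsymbol{a}\to\boldsymbol{d}$ at rate $c=1$ and $\boldsymbol{d}\to\boldsymbol{a}$ at rate $c'=1$, and (via the branching at rate $s=1$) the expected number of particles present is $e^{t}$ regardless of the on/off status — branching only happens when active, but since we follow one spine and weight by the mean population size, the standard spine/Feynman--Kac computation shows that for a test function $g\ge 0$,
\begin{align*}
\E\Big[\sum_{\alpha \in I_t \cup J_t} g(M_t^\alpha)\Big] = e^{t}\, \E\big[ g(X_t)\big],
\end{align*}
where $X_t$ is the position of a single on/off Brownian motion (motion on when active, off when dormant) started active at $0$. \emph{Then} I would estimate the right tail $\PP(X_t > \lambda t)$. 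Writing $A_t$ for the total time the spine has been active up to time $t$, conditionally on $A_t$ the displacement $X_t$ is $N(0, A_t)$, so a Chernoff bound gives $\PP(X_t > \lambda t) \le \E\big[\exp(-\lambda^2 t^2 / (2 A_t))\big]$, or more cleanly $\PP(X_t > \lambda t) \le \inf_{\theta>0} \E[e^{\theta^2 A_t/2}] e^{-\theta \lambda t}$. The quantity $\E[e^{\theta^2 A_t /2}]$ is the moment generating function of an additive functional of a two-state Markov chain (the on/off marker), which solves a $2\times 2$ linear ODE system and grows like $e^{t\, \psi(\theta^2/2)}$ where $\psi$ is the Perron eigenvalue; with unit switching rates one computes $\psi(\gamma) = \tfrac12\big(-2 + \gamma + \sqrt{\gamma^2 + 4}\big)$ or similar. \emph{Combining}, $\E[\#\{\alpha : M_t^\alpha > \lambda t\}] \le \exp\big(t[\,1 + \psi(\theta^2/2) - \theta\lambda\,] + o(t)\big)$, and optimizing over $\theta$ shows the bracket is negative precisely when $\lambda > \lambda^* := \sqrt{\sqrt5 - 1}$; verifying that this optimization indeed yields the constant $\sqrt{\sqrt5-1}$ is the crux of the computation and where the specific number enters. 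One then checks the boundary case $\lambda = \sqrt{\sqrt5-1}$ separately (the expectation stays bounded, which still suffices for $1-u(t,\lambda t)\to 0$ via a refinement, or one argues with $\lambda + \epsilon$ and lets $\epsilon \downarrow 0$ using monotonicity of $u(t,\cdot)$).

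\emph{The main obstacle} I anticipate is twofold. First, making the many-to-one lemma rigorous in the presence of \emph{coalescence}-free but \emph{killing}-free dynamics is routine, but one must be careful that dormant particles are correctly counted as ``present'' on the right of the front even though they do not move — this is exactly why the max in Corollary \ref{corol:dualityF-KPP} ranges over $I_t \cup J_t$, and a dormant particle sitting far to the right is a genuine contribution one cannot discard. Second, and more delicate, is upgrading the convergence-in-probability statement $1-u(t,\lambda t)\to 0$ to the \emph{almost sure} bound $\limsup R_t/t \le \lambda^*$. Here I would apply the first-moment bound at times $t = n \in \N$ with $\lambda = \lambda^* + \epsilon$ to get $\sum_n \PP(R_n > (\lambda^*+\epsilon) n) < \infty$, hence $R_n \le (\lambda^*+\epsilon) n$ eventually a.s.\ by Borel--Cantelli; to control $R_t$ for $t \in [n, n+1]$ one notes that between integer times the rightmost particle can only move by a Brownian increment over a bounded time window (dormant particles stay put and active ones move at most like a single Brownian motion, modulo the branching that can only produce finitely many particles with overwhelming probability), and a crude maximal inequality plus another Borel--Cantelli closes the gap. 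Letting $\epsilon \downarrow 0$ along a sequence finishes the proof.
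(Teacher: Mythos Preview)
Your many-to-one formula is wrong, and this is not a minor slip: it changes the answer. You write $\E\big[\sum_{\alpha}g(M_t^\alpha)\big]=e^t\,\E[g(X_t)]$, claiming the mean population size is $e^t$. But branching only occurs while the particle is \emph{active}, so the correct Feynman--Kac many-to-one reads $\E\big[\sum_{\alpha}g(M_t^\alpha)\big]=\E\big[e^{A_t}g(X_t)\big]$, with $A_t$ the active time of the spine; in particular $\E[|K_t|]=\E[e^{A_t}]$ grows at the Perron rate $\tfrac{\sqrt5-1}{2}$, not at rate $1$. If you keep your $e^t$ factor and optimise the exponent $1+\psi(\theta^2/2)-\theta\lambda$ over $\theta$ (with your $\psi(\gamma)=\tfrac12(\gamma-2+\sqrt{\gamma^2+4})$), a short calculation gives the critical value $\lambda=3^{3/4}/2\approx1.140$, strictly larger than $\sqrt{\sqrt5-1}\approx1.112$. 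So the argument as written would \emph{not} establish the theorem.

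The paper's route is both different and simpler. It does not use Chernoff or a Perron eigenvalue for the displacement; instead it (i) computes $\E[|K_t|]$ directly by solving the $2\times2$ linear ODE for $(\E[|I_t|],\E[|J_t|])$, obtaining growth rate $\tfrac{\sqrt5-1}{2}$, and (ii) bounds the single on/off Brownian tail \emph{crudely} via $A_t\le t$, i.e.\ $\PP(B_t>\lambda t)=\PP(\tilde B_{A_t}>\lambda t)\le(2\pi)^{-1/2}(\lambda\sqrt t)^{-1}e^{-\lambda^2 t/2}$. The product decays precisely when $\lambda^2\ge\sqrt5-1$. In other words, the entire reduction from $\sqrt2$ to $\sqrt{\sqrt5-1}$ comes from the reduced \emph{population growth rate}, and the paper deliberately throws away the additional variance reduction coming from dormancy in the spine's displacement. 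Your Chernoff approach tries to exploit the latter but forgets the former --- and the former is the dominant effect. The Borel--Cantelli upgrade to the almost-sure statement is handled as you outline.
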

\noindent In particular, we see that the propagation speed of the beneficial allele and the rightmost particle of the dual process is significantly reduced to at least $\approx 1.11$ compared to the previous speed of $\sqrt{2}$ in the case of the classical F-KPP Equation \eqref{eq:F-KPP}. A more general statement highlighting the exact dependence of $\lambda$ on the switching parameters $c$ and $c'$ is given later in Proposition \ref{prop:NewBoundWaveSpeed} and Corollary \ref{corol: propagationspeed}.
\begin{figure}[ht]
	\centering
  \scalebox{0.9}[0.5]{\includegraphics[width=0.75\textwidth]{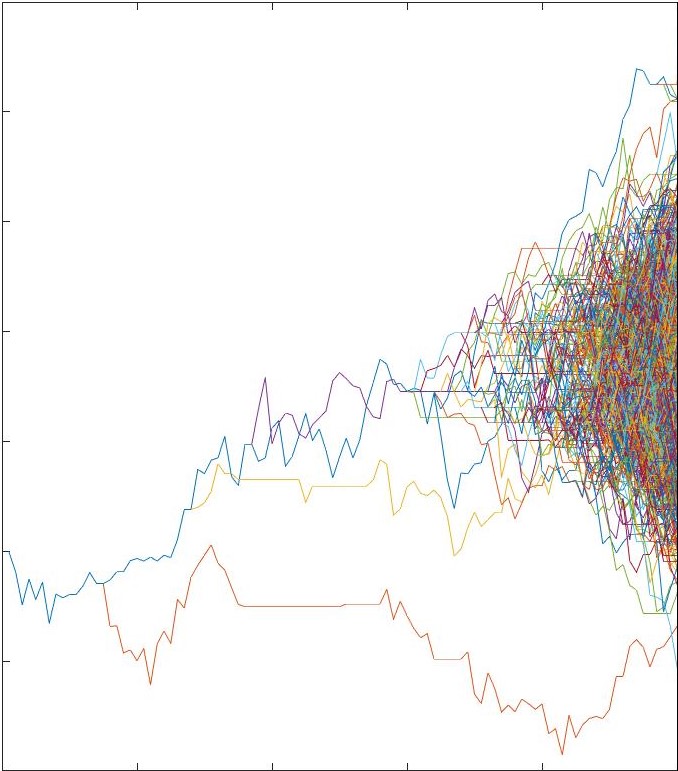}}
	\caption{Simulation of an on/off-BBM. Horizontal lines appear whenever motion of a particle is switched off.}
	\label{fig:OnOffBBM}
\end{figure}
\subsection{Outline of paper}

In Section \ref{sec:Existence}, we first state results concerning (weak) existence of solutions of our class of SPDEs from \eqref{eq:SteppingStoneWithSeedbank} and 
then prove the 
equivalent characterization of solutions in terms of
the delay representation \eqref{eq:SteppingStoneWithSeedbankDelay}. In Section \ref{sec:Uniqueness_and_duality}, we establish uniqueness (in law) of the solutions to \eqref{eq:SteppingStoneWithSeedbank} and show duality to on/off BCBM with killing. Then, in Section \ref{sec:F-KPP}, we investigate the special case of the F-KPP Equation with dormancy and show that the beneficial allele spreads at reduced speed in comparison with the corresponding classical F-KPP Equation (when started in Heaviside initial conditions). Finally, in Section \ref{sec:ProofForSection2} we provide outlines for the proofs of the results from Section \ref{sec:Existence}.

\section{Weak existence for a class of Stochastic Partial Differential Equations}
\label{sec:Existence}
\noindent In this section we provide a proof for Theorem \ref{thm: existence}. We begin by establishing strong existence and uniqueness for general systems of SPDEs with Lipschitz coefficients and use these results to obtain weak existence for systems with non-Lipschitz diffusion coefficients under some additional regularity assumptions. Finally, we show that Equation \eqref{eq:SteppingStoneWithSeedbank} fits into the previously established framework. In order to increase the readability of this section, we postpone the rather technical yet standard proofs of most theorems to Section \ref{sec:ProofForSection2}. \\
\noindent 
We begin with the definition of the white noise process which is crucial to the introduction of SPDEs.
\begin{defn}
A (space-time) white noise $W$ on $\R \times[0,\infty[$ is a zero-mean Gaussian process indexed by Borel subsets of $\R \times[0,\infty[$ with finite measure such that
$$\E[W(A)W(B)]=\lambda(A \cap B)$$
where $\lambda$ denotes the Lebesgue measure on $\R \times[0,\infty[$. If a set $A \in \mathcal{B}(\R \times[0,\infty[)$ is of the form $A=C\times [0,t]$ with $C \in \mathcal{B}(\R)$ we write $W_t(C)=W(A)$.
\end{defn}

\noindent We are now in a position to introduce the general setting of this section.
\begin{defn}
Denote by
\begin{align*}
   & b\colon  [0,\infty [ \times \R \times \R^{2} \to \R,
   & \tilde b \colon  [0,\infty [ \times \R \times \R^{2} \to \R
   \end{align*}
and 
\begin{align*}
    \sigma \colon [0,\infty [ \times \R \times \R^2 \to \R
\end{align*}
measurable maps. Then we consider the system of SPDEs
\begin{align} \label{eq: SPDE_general_shortnotation}
    \partial_t u(t,x) &= \frac{\Delta}{2} u(t,x)+b(t,x,u(t,x),v(t,x))  +\sigma(t,x,u(t,x),v(t,x)) \dot W(t,x), \nonumber\\
    \partial_t v(t,x) &= \tilde b(t,x,u(t,x),v(t,x))
\end{align}
with bounded initial conditions $u_0,v_0\in B(\R)$,
where $W$ is a $1$-dimensional white noise process.
\end{defn}
\noindent
The equation is to be interpreted in the usual analytically weak sense (cf. \cite{S94}), as follows:

\begin{defn} \label{def: weaksolution}
 Let $u_0,v_0 \in B(\R)$ 
and consider a 
random field
 $(u,v) =\allowdisplaybreaks (u(t,x),v(t,x))_{t\ge 0, x\in \R}$. 
\begin{itemize}
    \item
We say that $((u,v),W,\Omega , \mathcal{F},(\mathcal{F}_t)_{t \geq 0} , \PP)$ is a weak solution (in the stochastic sense) to Equation \eqref{eq: SPDE_general_shortnotation} with initial conditions $(u_0,v_0)$ if for each $\phi \in C^\infty_c (\R)$, almost surely it holds for all $t\ge0$ and $y\in\R$ that
        \begin{align} \label{spdeschwartz}
        \int_\R u(t,x) \phi(x) \, \ddd x &= \int_\R u_0(x) \phi(x) \, \ddd x +  \int_0^t \int_\R  u(s,x) \frac{\Delta}{2} \phi(x)  \, \ddd x \, \ddd s\\
        &\quad +   \int_0^t \int_\R b(s,x,u(s,x),v(s,x)) \phi(x) \, \ddd x  \,\ddd s \nonumber\\
        &\quad+  \int_0^t\int_\R \sigma(s,x,u(s,x), v(s,x)) \phi(x) \, W(\ddd s, \ddd x),\nonumber\\
\label{spde_v}         v(t,y)    &= v_0(y)  + \int_0^t \tilde b(s,y,u(s,y),v(s,y)) \, \ddd s
        \end{align}
and both $(u,v)$ and $W$ are adapted to $(\mathcal{F}_t)_{t \geq 0}$.
        We usually suppress the dependence of weak solutions on the underlying probability space and white noise process.
        \item We say that $(u,v)$ is a strong solution (in the stochastic sense) to Equation \eqref{eq: SPDE_general_shortnotation} if on some given probability space $(\Omega , \mathcal{F}, \PP)$ with some white noise process $W$, the process $(u,v)$ is adapted to the canonical filtration $(\mathcal{F}_t)_{t \geq 0}$ of $W$ and for each $\phi \in C^\infty_c (\R)$ satisfies almost surely 
\eqref{spdeschwartz}-\eqref{spde_v} for all $t \ge 0$ and $y\in\R$. 
\item  
For $p\ge2$, we say that a solution $(u,v)$ is $L^p$-bounded if 
\begin{align*}
\|(u,v)\|_{T,p}\defeq\sup_{0\leq t\leq T} \sup_{x \in \R} \E\left[\left(\vert u(t,x) \vert +\vert v(t,x) \vert \right)^p\right]^{1/p} < \infty
\end{align*}
for each $T>0$. 
\end{itemize}
\end{defn}

\noindent The solutions $(u,v)$ we are going to construct will have paths in $C(]0,\infty[,C(\R))\times C([0,\infty[,B_{\text{loc}}(\R))$. Here, we denote by $C(\R)$ resp.\ $B_{\text{loc}}(\R)$ the space of continuous resp.\ locally bounded measurable functions on $\R$. The spaces are endowed with the topology of locally uniform convergence. 
Note that for the random field $u$, this means equivalently that $u$ is jointly continuous on $]0,\infty[\times\R$. Since we allow for non-continuous (e.g.\ Heaviside) initial conditions $u_0$ and $v_0$, 
we have to restrict the path space for $u$ by excluding $t=0$. For the same reason and due to the absence of the Laplacian in \eqref{spde_v}, we cannot expect continuity of $v$ in the spatial variable $y$.

\noindent We start by establishing, 
for solutions with the above path properties, an equivalent mild representation involving the Gaussian heat kernel
$$G(t,x,y)=\frac{1}{\sqrt{2\pi t}}e^{\frac{(x-y)^2}{2t}}$$
which is the fundamental solution of the classical heat equation.

\begin{prop} \label{thm:SIE_Representation}
Let $u_0,v_0 \in B(\R) $ and assume that for all $T>0$, the linear growth condition
\begin{align} \label{eq: lineargrowth}
    |\tilde b(t,x,u,v)|+|b(t,x,u,v)| + |\sigma(t,x,u,v)  | 
   \leq C_T (1+|u|+|v|)
\end{align}
holds for every $(t,x,u,v)\in [0,T]\times \R \times \R \times \R$. 

\noindent Let $(u, v)$ be an adapted 
and $L^2$-bounded process with paths taking values in $C(]0,\infty[, C(\R))\times C([0,\infty[, B_{\mathrm{loc}}(\R))$.
 Then $(u,v)$ is a solution of Equation \eqref{eq: SPDE_general_shortnotation} in the sense of Definition \ref{def: weaksolution} iff $(u,v)$ satisfies the following Stochastic Integral Equation (SIE):
For each $t>0$ and $y\in\R$, almost surely it holds 
\begin{align} \label{SIE}
    u(t,y) &=  \int_\R   u_0(x) G (t,x,y)\, \ddd x  + \int_0^t\int_\R  b(s,x,u(s,x),v(s,x)) G(t-s,x,y) \ddd x\,\ddd s  \nonumber  \\
    &\quad + \int_0^t\int_\R  \sigma(s,x,u(s,x),v(s,x)) G(t-s,x,y) \,  W(\ddd s, \ddd x),
\end{align}
and almost surely Equation \eqref{spde_v} holds for all $t\ge0$ and $y\in\R$.

\end{prop}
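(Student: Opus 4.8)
The plan is to follow the classical argument of Shiga \cite[Thm.~2.1]{S94}: the equivalence between the analytically weak formulation and the mild (SIE) formulation for $u$ rests on a stochastic Fubini theorem together with the two structural properties of the Gaussian heat kernel, namely the Chapman--Kolmogorov identity $\int_\R G(t,x,z)G(s,z,y)\,\ddd z=G(t+s,x,y)$ and the heat equation $\partial_s G(t-s,x,y)=-\tfrac{\Delta}{2}G(t-s,x,y)$. First note that the second component $v$ requires no work: Equation \eqref{spde_v} appears verbatim in both the weak formulation of Definition \ref{def: weaksolution} and in the SIE \eqref{SIE}, so the whole argument concerns $u$. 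Throughout, the linear growth bound \eqref{eq: lineargrowth} together with the assumed $L^2$-boundedness of $(u,v)$ will be used to ensure that every integral below is finite and that the integrability (and predictability) hypotheses of the stochastic Fubini theorem are satisfied, and the hypothesis that $u$ has paths in $C(]0,\infty[,C(\R))$ will be essential when passing to the limit in the mollification parameter.

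For the direction ``SIE $\Rightarrow$ weak'', I would fix $\phi\in C^\infty_c(\R)$ and $t>0$, multiply \eqref{SIE} by $\phi(y)$, integrate in $y$ over $\R$, and interchange integrals (ordinary Fubini for the drift term, stochastic Fubini for the noise term). Writing $(P_r\phi)(x)=\int_\R G(r,x,y)\phi(y)\,\ddd y$ for the heat semigroup, this produces the three terms $\int_\R u_0\, P_t\phi$, $\int_0^t\int_\R b\,(P_{t-s}\phi)\,\ddd x\,\ddd s$ and $\int_0^t\int_\R \sigma\,(P_{t-s}\phi)\,W(\ddd s,\ddd x)$. Using the elementary identity $P_r\phi=\phi+\int_0^r \tfrac{\Delta}{2}P_{r'}\phi\,\ddd r'$ and commuting $\tfrac{\Delta}{2}$ with $P_{r'}$, a further application of Fubini in the pair of time variables reassembles exactly the right-hand side of \eqref{spdeschwartz}. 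Since both sides are a.s.\ continuous in $t$ on $]0,\infty[$ and the behaviour as $t\downarrow 0$ is governed by the approximate-identity property of $G(t,\cdot,y)$, the identity then holds for all $t\ge 0$ simultaneously, almost surely.

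The direction ``weak $\Rightarrow$ SIE'' is the substantial one. The plan is, first, to enlarge the class of admissible test functions: by a standard mollification in the time variable together with a weak-form product rule (Itô's formula in integrated form), one shows that for smooth $\varphi=\varphi(s,x)$ with sufficient spatial decay,
\[
\int_\R u(t,x)\varphi(t,x)\,\ddd x=\int_\R u_0(x)\varphi(0,x)\,\ddd x+\int_0^t\int_\R u(s,x)\big(\partial_s+\tfrac{\Delta}{2}\big)\varphi(s,x)\,\ddd x\,\ddd s+\int_0^t\int_\R b\,\varphi\,\ddd x\,\ddd s+\int_0^t\int_\R \sigma\,\varphi\,W(\ddd s,\ddd x),
\]
where the passage from compactly supported $\varphi$ to $\varphi$ with merely Gaussian spatial decay is obtained by spatial truncation, using boundedness of $u_0,v_0$ and \eqref{eq: lineargrowth}. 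Then, for fixed $t>0$, $y\in\R$ and $\eps>0$, I would insert the specific choice $\varphi(s,x)=G(t-s+\eps,x,y)$, which satisfies $\big(\partial_s+\tfrac{\Delta}{2}\big)\varphi\equiv 0$, so the $u$-term disappears and we are left with $\int_\R u(t,x)G(\eps,x,y)\,\ddd x$ on the left and $\int_\R u_0(x)G(t+\eps,x,y)\,\ddd x$ plus the drift and noise integrals against $G(t-s+\eps,x,y)$ on the right. Finally, letting $\eps\downarrow 0$: the left-hand side converges to $u(t,y)$ because $u(t,\cdot)$ is continuous and $G(\eps,\cdot,y)$ is an approximate identity; the $u_0$-term converges by dominated convergence; the drift term converges by dominated convergence using \eqref{eq: lineargrowth} and the $L^2$-bound; and the stochastic integral converges in $L^2$ by Walsh's isometry, since $\int_0^t\int_\R\big(G(t-s+\eps,x,y)-G(t-s,x,y)\big)^2\,\ddd x\,\ddd s\to 0$. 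This yields \eqref{SIE} for each fixed $(t,y)$, and joint continuity for $t>0$ upgrades it to an almost-sure statement valid for all $(t,y)$.

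I expect the main obstacle to lie entirely in the ``weak $\Rightarrow$ SIE'' step: rigorously enlarging the test-function class from fixed $\phi\in C^\infty_c(\R)$ to the time-dependent, non-compactly-supported kernel $G(t-s+\eps,\cdot,y)$ (this requires both the weak-form product rule in time and a careful truncation argument in space), and then controlling the $\eps\downarrow 0$ limit, where the singularity of $G(t-s,\cdot,y)$ as $s\uparrow t$ must be balanced against the regularity available for $u$ near time $t$ --- here it is crucial that we only need $u(t,\cdot)$ continuous, which holds since $t>0$. Verifying the integrability and predictability hypotheses for each use of the stochastic Fubini theorem is the other point demanding some care, but it is routine given \eqref{eq: lineargrowth} and the $L^2$-boundedness assumption.
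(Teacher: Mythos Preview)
Your proposal is correct and follows essentially the same approach as the paper, namely Shiga's classical argument: both extend the test-function class from $C^\infty_c(\R)$ to rapidly decreasing time-dependent test functions via an integration-by-parts (product-rule) identity, and then plug in the time-reversed heat kernel. The only cosmetic difference is the regularization: the paper inserts $\phi_T^y(s,x)=G(T-s,x,y)$ on $[0,T[$ and lets the upper time limit $t\uparrow T$, whereas you insert $G(t-s+\eps,x,y)$ on $[0,t]$ and let $\eps\downarrow 0$; these are the same limit after a change of variable, and the paper likewise relies on continuity of $u(t,\cdot)$ for $t>0$ at that step.
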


\noindent Using the mild formulation of the equation, the next step is to show strong existence and uniqueness 
by a standard Picard iteration scheme. For this we need to impose the usual Lipschitz assumptions.

\begin{thm} \label{Lipschitzexistence}

Assume that for all  $T>0$, we have  the linear growth condition \eqref{eq: lineargrowth} and the following Lipschitz condition:
\begin{align} \label{eq:Lipschitzcondition}
    |\tilde b(t,x,u,v)-\tilde b(t,x,\tilde u,\tilde v)|+|b(t,x,u,v)-b(t,x,\tilde u,\tilde v)| + | \sigma(t,x,u,v) -\sigma(t,x,\tilde u,\tilde v)  | \nonumber\\
   \leq L_T (|u-\tilde u|+|v-\tilde v|)
\end{align}
for every $(t,x) \in [0,T]\times \R $ and $(u,v),(\tilde u, \tilde v) \in \R^2$.\\
Then for $u_0, v_0\in B(\R)$, Equation \eqref{eq: SPDE_general_shortnotation}
has a unique strong $L^2$-bounded solution $(u,v)$ with paths taking values in $C(]0,\infty[,C(\R))\times C([0,\infty[,B_{\mathrm{loc}}(\R))$.
Moreover, this solution is $L^p$-bounded for each $p\ge2$.

\end{thm}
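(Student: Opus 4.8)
The plan is to run a standard Picard iteration scheme in the mild formulation, using the SIE \eqref{SIE} together with \eqref{spde_v} as the fixed-point equation. I would work on a finite time interval $[0,T]$ first and then patch the solutions together, exploiting uniqueness. Concretely, set $(u^{(0)},v^{(0)})\equiv (u_0,v_0)$ and define inductively
\begin{align*}
u^{(n+1)}(t,y) &= \int_\R u_0(x) G(t,x,y)\,\ddd x + \int_0^t\int_\R b(s,x,u^{(n)}(s,x),v^{(n)}(s,x)) G(t-s,x,y)\,\ddd x\,\ddd s\\
&\quad + \int_0^t\int_\R \sigma(s,x,u^{(n)}(s,x),v^{(n)}(s,x)) G(t-s,x,y)\, W(\ddd s,\ddd x),\\
v^{(n+1)}(t,y) &= v_0(y) + \int_0^t \tilde b(s,y,u^{(n)}(s,y),v^{(n)}(s,y))\,\ddd s.
\end{align*}
The first task is to show that these iterates are well-defined, adapted, and stay $L^p$-bounded: using the linear growth bound \eqref{eq: lineargrowth}, the Burkholder--Davis--Gundy inequality for the Walsh stochastic integral, the bound $\int_\R G(t-s,x,y)^2\,\ddd x = (4\pi(t-s))^{-1/2}$, and Jensen's inequality against the probability kernel $\tfrac1t\mathbbm 1_{[0,t]}(s) G(t-s,x,y)\,\ddd s\,\ddd x$, one arrives at an estimate of the form $\|(u^{(n+1)},v^{(n+1)})\|_{t,p}^p \le \tilde C_{T,p}\big(1+\int_0^t (1+(t-s)^{-1/2})\|(u^{(n)},v^{(n)})\|_{s,p}^p\,\ddd s\big)$. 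Because the singularity $(t-s)^{-1/2}$ is integrable, a (generalized/singular) Gronwall argument — or, as the commented-out draft does, an intermediate H\"older step with exponent $p\in(1,2)$ to convert $(t-s)^{-1/2}$ into an $L^q$ factor — yields $\sup_n \|(u^{(n)},v^{(n)})\|_{T,p}<\infty$.

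The second task is the contraction estimate. Subtracting consecutive iterates and using the Lipschitz condition \eqref{eq:Lipschitzcondition} in place of linear growth gives, with $\Delta_n(s)\defeq \sup_{x\in\R}\E[(|u^{(n+1)}(s,x)-u^{(n)}(s,x)|+|v^{(n+1)}(s,x)-v^{(n)}(s,x)|)^p]$, an inequality $\Delta_{n+1}(t)\le K_{T,p}\int_0^t (1+(t-s)^{-1/2})\Delta_n(s)\,\ddd s$. Iterating this singular-kernel inequality $m$ times produces a convolution kernel that is summable (the iterated kernels of $(t-s)^{-1/2}$ behave like Beta-function powers and decay super-exponentially after finitely many steps), so $\sum_n \sup_{0\le t\le T}\Delta_n(t)^{1/p}<\infty$. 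Hence $(u^{(n)},v^{(n)})$ is Cauchy in the norm $\|\cdot\|_{T,p}$ and converges to a limit $(u,v)$, which one checks solves the SIE \eqref{SIE}–\eqref{spde_v} by passing to the limit in each term (again using Lipschitz continuity and BDG for the stochastic term). Uniqueness among $L^2$-bounded solutions follows from the same Lipschitz/Gronwall estimate applied to the difference of two solutions; note the initial-condition terms cancel, which is what allows the argument to close. Since the solution is shown to be $L^p$-bounded for every $p\ge2$ (redo the boundedness estimate with general $p$), and $T>0$ is arbitrary, uniqueness lets us glue the solutions on $[0,T]$ for all $T$ into a solution on $[0,\infty[$.

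The third task is to upgrade the mild solution to the stated path regularity $C(]0,\infty[,C(\R))\times C([0,\infty[,B_{\mathrm{loc}}(\R))$ and then invoke Proposition \ref{thm:SIE_Representation} to conclude it is a strong solution of \eqref{eq: SPDE_general_shortnotation}. For $u$, the deterministic drift and heat-semigroup terms are jointly continuous on $]0,\infty[\times\R$ by dominated convergence and mollification properties of $G$; for the stochastic convolution one applies a Kolmogorov-type continuity criterion, estimating $\E|I(t,y)-I(r,z)|^{2p}$ via BDG and the standard bound $\int_0^{t\vee r}\int_\R (G(t-s,x,y)-G(r-s,x,z))^2\,\ddd x\,\ddd s \le C(|t-r|^{1/2}+|y-z|)$, choosing $p$ large enough to beat the H\"older exponents. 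For $v$, the representation $v(t,y)=v_0(y)+\int_0^t\tilde b(\ldots)\,\ddd s$ shows $v$ is the sum of the (possibly discontinuous) initial datum and a term that inherits the continuity of $u$, hence $v\in C([0,\infty[,B_{\mathrm{loc}}(\R))$.

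I expect the \textbf{main obstacle} to be making the singular-kernel Gronwall/contraction estimates genuinely rigorous: the factor $(t-s)^{-1/2}$ coming from $\|G(t-s,\cdot,y)\|_{L^2}^2$ is not handled by the classical Gronwall lemma, and one must either iterate the integral inequality and control the iterated kernels, or perform the H\"older trick to trade integrability for a bounded kernel before applying Gronwall. A secondary, more bookkeeping-type difficulty is carefully tracking the two-component coupling (so that the drift $\tilde b$ feeding back into $v$ does not destroy the $L^p$ bounds) and ensuring all Fubini interchanges for the Walsh integral are justified by the $L^2$-boundedness assumption; these are routine but must be stated. Because these computations are standard, I would carry them out in the deferred Section \ref{sec:ProofForSection2} and only sketch them here.
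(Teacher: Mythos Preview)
Your proposal is correct and follows essentially the same approach as the paper: Picard iteration in the mild formulation, $L^p$-boundedness via linear growth plus the H\"older trick to handle the $(t-s)^{-1/2}$ singularity before Gronwall, contraction via the Lipschitz condition, Kolmogorov--Chentsov for the joint continuity of $u$ (using exactly the heat-kernel increment bound you wrote), and uniqueness by the same Lipschitz/Gronwall estimate on the difference. The only point where the paper is slightly more explicit is its Step~3, where after obtaining the continuous modification $\tilde u$ it \emph{re-solves} the integral equation \eqref{spde_v} pathwise in $\omega$ for each fixed $y$ to produce a version $\tilde v$ that satisfies \eqref{spde_v} for \emph{all} $(t,y)$ simultaneously (not just for fixed $(t,y)$ a.s.), which is what actually delivers the path regularity $C([0,\infty[,B_{\mathrm{loc}}(\R))$ and the pathwise uniqueness statement for $v$; your description of this step is a bit telegraphic but not wrong.
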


\begin{remark}\label{rem_continuity}
Although the paths of $u$ are not continuous at $t=0$ for non-continuous initial conditions $u_0$, \textit{Step 2} in the proof of Theorem \ref{Lipschitzexistence} will in fact show that the process
\begin{align*}
    u(t,x)-\int_\R G(t,x,y) u_0(y)\, \ddd y
\end{align*}
has always paths in $C([0,\infty[,C(\R))$ and thus, in particular, is locally bounded in $(t,x)\in[0,\infty[\times \R$.
\end{remark}

\noindent Our next goal is to establish conditions under which we can ensure that the solutions to our SPDE stay 
in $[0,1]$. 

\begin{thm} \label{comparison}

Assume that the conditions of Theorem \ref{Lipschitzexistence} are satisfied. In addition, suppose that 
$b$ and $\tilde b$ are even Lipschitz continuous jointly\footnote{This is stronger than the Lipschitz condition \eqref{eq:Lipschitzcondition} which only requires that the bound holds in $(u,v)$.} in $(x,u,v)$ and satisfy the inequalities
\begin{align}
        b(t,x,0,v)&\geq 0\nonumber \text{ for all } (t,x,v)\in [0,\infty[\times \R \times \R ,\notag\\
    b(t,x,1,v)&\leq 0 \nonumber \text{ for all } (t,x,v)\in [0,\infty[\times \R \times \R,\notag\\
    \tilde b(t,x,u,0)&\geq 0\nonumber \text{ for all } (t,x,u)\in [0,\infty[\times \R \times \R,\notag\\
    \tilde b(t,x,u,1)&\leq 0  \text{ for all } (t,x,u)\in [0,\infty[\times \R \times \R \label{eq:inequality_b}.
\end{align}
Finally, assume that $\sigma$ is a function of $(t,x,u)$ alone, Lipschitz continuous jointly in $(x,u)$ and satisfies 
\begin{align} \label{conditionspositive}
    \sigma (t,x,0)&=0\text{ for all } (t,x)\in [0,\infty[\times \R ,\notag\\
    \sigma(t,x,1)&=0\text{ for all } (t,x)\in [0,\infty[\times \R .
\end{align}
For initial conditions $u_0,v_0 \in B(\R , [0,1])$,
let $(u,v)\in C(]0,\infty[,C(\R))\times C([0,\infty[,B_{\mathrm{loc}}(\R))$ be the unique strong $L^2$-bounded solution to Equation \eqref{eq: SPDE_general_shortnotation} 
from Theorem \ref{Lipschitzexistence}.
Then we have
$$\PP\left((u(t,x),v(t,x))\in [0,1]^2 \text{ for all } t\geq 0\text{  and }x \in \R \right) =1.$$
In particular, almost surely the solution has paths taking values in $C(]0,\infty[, C(\R,[0,1]))\times C([0,\infty[, B(\R,[0,1]))$, where the spaces are endowed with the topology of locally uniform convergence.

\end{thm}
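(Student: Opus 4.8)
The plan is to prove the comparison/boundedness statement via a two-step regularization argument, mirroring the classical approach for one-dimensional SDEs (cf.\ Ikeda--Watanabe). First I would regularize the white noise in space: for a mollifier $\phi_\eps$ supported in $[-\eps,\eps]$ with $\int_\R \phi_\eps^2 = 1$, set $W^x_\eps(t) = \int_0^t\int_\R \phi_\eps(x-y)\, W(\ddd s,\ddd y)$, which for fixed $x$ is a standard Brownian motion. Simultaneously I would replace the Laplacian $\tfrac{\Delta}{2}$ by the bounded operator $\tfrac{1}{\eps}(G(\eps)-I)$. This turns \eqref{eq: SPDE_general_shortnotation} into a genuine (infinite) system of Itô SDEs parametrized by $x$, whose unique strong solution $(u_\eps,v_\eps)$ exists by the Lipschitz hypotheses; one also records the corresponding mild/SIE formulation with the kernel $G_\eps(t,x,y) = e^{-t/\eps}\sum_{n\ge0}\tfrac{(t/\eps)^n}{n!}G(n\eps,x,y)$.

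The second step has two sub-parts. First, one shows $(u_\eps,v_\eps)$ stays in $[0,1]^2$: applying Itô's formula to $(u_\eps(t,x))^-$ and $(v_\eps(t,x))^-$, the diffusion term vanishes because $\sigma(t,x,0)=0$ and the local-time contribution is controlled, while the drift terms are dominated using the sign conditions $b(t,x,0,v)\ge0$, $\tilde b(t,x,u,0)\ge0$ together with the Lipschitz bounds $b(t,x,u,v)\ge -L(|u|+|v|)$ etc.; one arrives at a Gronwall inequality for $\sup_x \E[(u_\eps(t,x))^- + (v_\eps(t,x))^-]$, forcing it to be $0$. Applying the same reasoning to $(1-u_\eps,1-v_\eps)$ with the conditions $b(t,x,1,v)\le0$, $\tilde b(t,x,u,1)\le0$, $\sigma(t,x,1)=0$ yields $(u_\eps(t,x),v_\eps(t,x))\in[0,1]^2$ almost surely for all $t,x$. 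Second, one shows $(u_\eps,v_\eps)\to(u,v)$ as $\eps\to0$: writing the difference of the two SIE's, splitting into the usual collection of error terms $h_i^\eps$, and using (a) the $L^2$-bounds guaranteed by Theorem \ref{Lipschitzexistence} and the boundedness of $u_\eps,v_\eps$, (b) the heat-kernel estimates $\int_\R R_\eps(t,x,y)^2\,\ddd y \le C t^{-1/2}$, $\int_\R|R_\eps-G|\,\ddd y \to 0$, $\int_0^t\!\int_\R(R_\eps-G)^2 \to 0$, and (c) the Lipschitz condition to absorb the remaining terms into a Gronwall-type inequality with a mildly singular kernel $(t-s)^{-1/2}$. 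This gives $\sup_{t\le T}\sup_x \E|(u_\eps(t,x),v_\eps(t,x))-(u(t,x),v(t,x))|^2 \to 0$, hence $(u(t,x),v(t,x))\in[0,1]^2$ a.s.\ for every fixed $(t,x)$.

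Finally, to upgrade from ``for every fixed $(t,x)$'' to ``for all $(t,x)$ simultaneously,'' I would invoke the path regularity from Theorem \ref{Lipschitzexistence} and Remark \ref{rem_continuity}: $u$ is jointly continuous on $]0,\infty[\times\R$, so $\{u(t,x)\in[0,1]\ \forall t>0,x\in\R\}$ has full probability because it suffices to check it on a countable dense set; near $t=0$ one uses that $u(t,x)-\int G(t,x,y)u_0(y)\,\ddd y$ is continuous up to $t=0$ and the Green-kernel term lies in $[0,1]$ since $u_0$ does. For $v$, the second equation is an ODE in $t$ for each fixed $x$ driven by the (now continuous in $t$) process $u$; integrating the sign conditions pathwise (as in the delay representation of Proposition \ref{prop: Delay}) keeps $v(t,x)\in[0,1]$ for all $t$ and each $x$, and this extends to all $(t,x)$ by measurability plus the decomposition $v = v^* + (\text{term involving }v_0)$ with $v^*$ jointly continuous. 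The main obstacle I expect is bookkeeping in the convergence step: handling the singular kernel $(t-s)^{-1/2}$ uniformly in $x$ and showing all the cross-terms coming from the mollification of $\sigma$ vanish, which requires the careful three-part heat-kernel lemma and a Hölder-exponent juggling in the Gronwall argument — routine in spirit but technically the heaviest piece, which is why the paper defers it to Section \ref{sec:ProofForSection2}.
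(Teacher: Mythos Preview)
Your proposal is correct and follows essentially the same route as the paper: regularize the noise and the Laplacian to obtain an SDE system $(u_\eps,v_\eps)$, run the Ikeda--Watanabe comparison on the negative parts (and on $1-u_\eps,1-v_\eps$) to trap it in $[0,1]^2$, then pass to the limit $\eps\to0$ via the SIE representations and the heat-kernel lemma (your items (a)--(c) are exactly the paper's Lemma~\ref{stupid lemma}), and finally upgrade to ``for all $(t,x)$'' using the joint continuity of $u$ and a pathwise ODE comparison for $v$. The only cosmetic difference is that the paper takes $\rho_\eps=G(\eps,\cdot,0)$ as the mollifier (so $W_\eps^x$ is a \emph{scaled} Brownian motion rather than a standard one), and the drift bound it actually uses is the slightly sharper $b(t,x,u,v)\ge -L|u|$ rather than your $-L(|u|+|v|)$; neither affects the argument.
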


\begin{remark}
By the same approximation procedure as in the proof of Theorem \ref{existencereal} below, it is possible to relax the condition in Theorem \ref{comparison} that $b, \tilde b$ resp.\ $\sigma$ are Lipschitz continuous jointly in $(x,u,v)$ resp.\ $(x,u)$ and to require merely joint continuity and the Lipschitz condition \eqref{eq:Lipschitzcondition}. 

\end{remark}

\noindent In order to extend Theorem \ref{comparison} 
to non-Lipschitz diffusion coefficients $\sigma$, we
need to impose an additional assumption on our SPDE in what follows. 
For given $u\in C(]0,\infty[\times\R,[0,1])$ and fixed $y \in \R$, we consider 
Equation \eqref{spde_v} as an ordinary 
integral equation in $v(\cdot, y)$. 
We then assume 
in effect that the unique solution $v$ is a deterministic functional of $u$ and $v_0$, in the sense of \eqref{eq: SPDE_general_shortnotation_Delay} below. We are then in a position to prove: 

\begin{thm} \label{existencereal}
Assume that $ b,\tilde b $ and $\sigma$ satisfy the following:
\begin{enumerate}
    \item[(i)] We have that $b, \tilde b \colon \R^2 \to \R$ are Lipschitz-continuous functions of $(u,v)$ alone
and satisfy the inequalities in \eqref{eq:inequality_b}.

\item[(ii)] We have that $\sigma \colon \R \to \R $ is a continuous (not necessarily Lipschitz) function of $u$ alone which 
satisfies the conditions in \eqref{conditionspositive} 
and a linear growth bound 
    \begin{align*}
    |\sigma(u)| \leq K (1+|u|)
\end{align*}
for all $u \in \R $ and some $K>0$.
\item[(iii)] 
We assume that there exist continuous 
 functionals 
\begin{align*}
    F &\colon C(]0,\infty[\times\R,[0,1]) \to C([0,\infty[ \times \R, [0,1]), 
    \\
    H &\colon [0,\infty[ \times  B(\R, [0,1])\to  B(\R,[0,1])
\end{align*}
such that for each given $u \in C(]0,\infty[\times\R,[0,1])$, $v_0 \in B(\R,[0,1])$ and $y\in\R$, the unique solution $v(\cdot,y)$ to Equation \eqref{spde_v} has the following representation:
\begin{align} \label{eq: SPDE_general_shortnotation_Delay}
     v(t,y) &= F(u)(t,y) + H(t,v_0)(y),\qquad t\ge0.
\end{align}
\end{enumerate}
%
Then for given initial conditions $u_0,v_0\in B(\R,[0,1])$ there exists a white noise process $W$ and a corresponding filtered probability space such that Equation \eqref{eq: SPDE_general_shortnotation} 
has a 
weak solution $(u,v)$ with paths in $C(]0,\infty[, C(\R,[0,1]))\times C([0,\infty[, B(\R,[0,1]))$
almost surely.
\end{thm}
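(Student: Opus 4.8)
The plan is to approximate the non-Lipschitz diffusion coefficient $\sigma$ by Lipschitz ones, solve the resulting equations via Theorem \ref{Lipschitzexistence} and Theorem \ref{comparison}, and then pass to the limit by a tightness argument combined with a martingale-problem identification; the representation \eqref{eq: SPDE_general_shortnotation_Delay} is what keeps the first-coordinate equation self-contained throughout. First, choose Lipschitz functions $\sigma_n\colon\R\to\R$ with $\sigma_n\to\sigma$ locally uniformly, $\sigma_n(0)=\sigma_n(1)=0$, and a common linear growth bound $|\sigma_n(u)|\le K'(1+|u|)$. Since $b,\tilde b$ depend only on $(u,v)$ and are Lipschitz there, they are in particular jointly Lipschitz in $(x,u,v)$ and satisfy \eqref{eq:inequality_b}, while each $\sigma_n$ is Lipschitz in $(x,u)$ and satisfies \eqref{conditionspositive}; hence Theorem \ref{Lipschitzexistence} gives, for each $n$, a unique strong $L^2$-bounded solution $(u_n,v_n)$ of \eqref{eq: SPDE_general_shortnotation} with coefficients $(b,\tilde b,\sigma_n)$ and initial data $(u_0,v_0)$, and Theorem \ref{comparison} shows that $(u_n,v_n)$ has paths in $C(]0,\infty[,C(\R,[0,1]))\times C([0,\infty[,B(\R,[0,1]))$ almost surely. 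By assumption (iii) we have $v_n=F(u_n)+H(\cdot,v_0)$, so the mild equation \eqref{SIE} for $u_n$ becomes a closed stochastic partial delay equation in $u_n$ alone, with drift $b\big(u_n,F(u_n)+H(\cdot,v_0)\big)$ and diffusion $\sigma_n(u_n)$.

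Next, using this mild representation together with the uniform bound $u_n\in[0,1]$, boundedness of $b$ on $[0,1]\times[0,1]$, the standard Gaussian kernel estimates and the Burkholder-Davis-Gundy inequality, one obtains moment bounds on spatial and temporal increments of the shifted field $\tilde u_n(t,x):=u_n(t,x)-\int_\R G(t,x,y)u_0(y)\,\ddd y$ (which by Remark \ref{rem_continuity} has paths in $C([0,\infty[,C(\R))$) that are uniform in $n$. Kolmogorov's continuity criterion then yields tightness of $(\tilde u_n)_n$, hence of $(u_n)_n$, in $C(]0,\infty[,C(\R,[0,1]))$. Passing to a weakly convergent subsequence and applying Skorokhod's representation theorem, we may assume on one probability space that $u_n\to u$ locally uniformly on $]0,\infty[\times\R$ almost surely, with $u\in C(]0,\infty[,C(\R,[0,1]))$. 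Define $v:=F(u)+H(\cdot,v_0)$; by continuity of $F$ and $H$ it lies in $C([0,\infty[,B(\R,[0,1]))$, $v_n\to v$, and $v$ takes values in $[0,1]$ since each $v_n$ does.

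It remains to identify $(u,v)$ as a weak solution. Applying Ito's formula to $h(u_n(t,y))$ for $h\in C_b^2(\R)$ shows that a functional of the form $T(u_n,\cdot)-S_n(u_n,\cdot)$ is a martingale, where $T$ collects the drift terms and $S_n$ the quadratic-variation term involving $\sigma_n^2G^2$. The drift part passes to the limit by continuity of $b$, continuity of the functionals $F,H$, and dominated convergence, and $S_n(u_n)\to S(u)$ (with $S$ built from $\sigma^2$) because $\sigma_n^2(u_n)\to\sigma^2(u)$ pointwise with a uniform bound on $[0,1]$ while $\int_0^t\int_\R G^2(t-s,x,y)\,\ddd x\,\ddd s<\infty$. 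Hence $u$ solves the limiting martingale problem, i.e.\ $M(t,y):=u(t,y)-\int_\R u_0(x)G(t,x,y)\,\ddd x-\int_0^t\int_\R b\big(u(s,x),v(s,x)\big)G(t-s,x,y)\,\ddd x\,\ddd s$ is a continuous martingale with quadratic variation $\int_0^t\int_\R\sigma^2(u(s,x))G^2(t-s,x,y)\,\ddd x\,\ddd s$. A martingale representation theorem then provides, on a possibly enlarged filtered probability space, a white noise $W$ such that $M(t,y)=\int_0^t\int_\R\sigma(u(s,x))G(t-s,x,y)\,W(\ddd s,\ddd x)$, so that $(u,v)$ satisfies the SIE \eqref{SIE}; by Proposition \ref{thm:SIE_Representation} (whose linear growth hypothesis holds since $b,\tilde b$ are Lipschitz and $\sigma$ has linear growth) it is a weak solution of \eqref{eq: SPDE_general_shortnotation} with the claimed path properties.

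The main obstacle is the limit passage in the previous paragraph: establishing the increment estimates uniformly in $n$ despite the merely bounded initial condition (dealt with by working with $\tilde u_n$), and, more delicately, proving continuity of the maps $u\mapsto S_n(u)$ and $u\mapsto S(u)$ together with the joint convergence $S_n(u_n)\to S(u)$ --- this is where the non-Lipschitz nature of $\sigma$ meets the diagonal singularity of $G^2$, and it is precisely the uniform bound $u_n,v_n\in[0,1]$ coming from Theorem \ref{comparison} that makes the required dominated-convergence argument work.
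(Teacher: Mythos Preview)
Your overall strategy---Lipschitz approximation of $\sigma$, solving via Theorems \ref{Lipschitzexistence} and \ref{comparison}, reducing to a closed equation in $u_n$ via \eqref{eq: SPDE_general_shortnotation_Delay}, tightness of the shifted field $\tilde u_n$, and passage to the limit through a martingale problem followed by a representation theorem---is exactly the approach the paper takes.

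There is, however, a genuine gap in how you formulate the martingale problem. You apply It\^o's formula to $h(u_n(t,y))$ and then claim that
\[
M(t,y):=u(t,y)-\int_\R u_0(x)G(t,x,y)\,\ddd x-\int_0^t\!\!\int_\R b\big(u(s,x),v(s,x)\big)G(t-s,x,y)\,\ddd x\,\ddd s
\]
is a continuous martingale with quadratic variation $\int_0^t\!\int_\R\sigma^2(u(s,x))G^2(t-s,x,y)\,\ddd x\,\ddd s$. This is not correct: by the mild equation $M(t,y)=\int_0^t\!\int_\R\sigma(u(s,x))G(t-s,x,y)\,W(\ddd s,\ddd x)$, and because the integrand depends on the upper limit $t$ through $G(t-s,x,y)$, this stochastic convolution is \emph{not} a martingale in $t$ (one checks $\E[M(t,y)\mid\mathcal F_r]\neq M(r,y)$). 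Correspondingly, for fixed $y$ the mild solution $u_n(\cdot,y)$ is not a semimartingale, so It\^o's formula cannot be applied pointwise in $y$, and no standard representation theorem produces a white noise from the object you wrote down.

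The paper avoids this by working with the \emph{weak} (distributional) form throughout the limit step: for $\phi\in C_c^\infty(\R)$ the process
\[
M_t(\phi)=\langle u_n(t,\cdot),\phi\rangle-\langle u_0,\phi\rangle-\int_0^t\!\big\langle u_n(s,\cdot),\tfrac{\Delta}{2}\phi\big\rangle\,\ddd s-\int_0^t\!\big\langle b(u_n(s,\cdot),F(u_n)(s,\cdot)+H(s,v_0)),\phi\big\rangle\,\ddd s
\]
\emph{is} a genuine continuous martingale with quadratic variation $\int_0^t\langle\sigma_n^2(u_n(s,\cdot)),\phi^2\rangle\,\ddd s$. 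One then passes to the limit in this martingale problem (your continuity and dominated-convergence observations apply here without change, and the $G^2$ singularity disappears), and invokes the representation theorem of El Karoui--M\'el\'eard for orthogonal martingale measures to obtain the white noise $W$. Finally Proposition \ref{thm:SIE_Representation} converts back to the mild form. Everything else in your outline---the tightness argument via $\tilde u_n$, the use of $[0,1]$-valuedness for uniform bounds, and the definition $v:=F(u)+H(\cdot,v_0)$---is correct and matches the paper.
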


\begin{remark}
Note that we have to impose the condition \eqref{eq: SPDE_general_shortnotation_Delay} that $v$ is a deterministic functional of $u$ (and $v_0)$ in order to reduce our coupled system of equations \eqref{eq: SPDE_general_shortnotation} to an equation in $u$ only. This is due to the fact that the methods we employ for tightness require Polish spaces and $B(\R,[0,1])$ (the state space of $v$) is not separable. 
\end{remark}

\noindent Our next goal is to show that our specific model, i.e. the SPDE \eqref{eq:SteppingStoneWithSeedbank} fits into the framework of the preceding theorems.
To this end, we first prove 
Proposition \ref{prop: Delay}, which allows us to represent our system of SPDEs as a single Stochastic Partial Delay Differential Equation (SPDDE).

\begin{proof} [Proof of Proposition \ref{prop: Delay}]
For given $u\in C(]0,\infty[\times\R,[0,1])$ and $v_0\in B(\R,[0,1])$, we consider for each fixed $x\in\R$ the second component of the system \eqref{eq:SteppingStoneWithSeedbank} as  an integral equation in $v(\cdot,x)$, 
i.e.
$$v(t,x)= v_0(x) + c'\int_0^t (u(s,x)-v(s,x)) \, \ddd s.$$
Then by an application of the variation of constants formula we get 
for all $x\in \R$ that
\begin{equation}\label{variation of constants}
    v(t,x)=e^{-c't} \left(c'  \int_0^t u(s,x) e^{c's} \, \ddd s +v_0(x) \right),\qquad t\ge0 .
\end{equation}
One may verify this through a simple application of the integration by parts formula. 
To see this we calculate as follows for each $x \in \R$:
\begin{align*}
    e^{c' t} v(t,x) &=v_0(x) + c' \int_0^t v(s,x) e^{c's} \, \ddd s+ \int_0^t e^{c's} \ddd v(s,x)\\
    &=v_0(x) + c' \int_0^t v(s,x) e^{c's} \, \ddd s + c'\int_0^t e^{c's} (u(s,x)-v(s,x)) \ddd s\\
    &= v_0(x)+c'\int_0^t e^{c's} u(s,x) \, \ddd s.
\end{align*}
 Rearranging we obtain \eqref{variation of constants}, which we note is just the integral form of the second equation in \eqref{eq:SteppingStoneWithSeedbankDelay}.
Now it is easy to see 
that $(u,v)\in C(]0,\infty[,C(\R,[0,1]))\times C([0,\infty[,B(\R,[0,1]))$ is a solution of \eqref{eq:SteppingStoneWithSeedbank} in the sense of Definition \ref{def: weaksolution} iff 
it is a solution of the SPDDE \eqref{eq:SteppingStoneWithSeedbankDelay}.
\end{proof}

\noindent We are finally in a position to provide the following:

\begin{proof} [Proof of Theorem \ref{thm: existence}]
Consider the SPDE given by
\begin{align*}
    \partial_t u(t,x) &= \frac{\Delta}{2} u(t,x) + s (u^2(t,x)-u(t,x))\ind_{[0,1]} (u(t,x)) +c (v(t,x)-u(t,x)) \\
    &\quad- m_1 u(t,x)+  m_2 (1-u(t,x)) \\
    &\quad + \ind_{[0,1]} (u(t,x))\sqrt{\nu u(t,x)(1-u(t,x))} \dot W (t,x), \\
    \partial_t v(t,x) &=  c'( u(t,x)-v(t,x)).
\end{align*}
Then we note that $$b(u,v):=(u^2-u) \ind_{[0,1]}(u) - m_1 u+  m_2 (1-u)+c(v \ind_{[0,1]}(v) +\ind_{]1,\infty[}(v)-u),
$$ 
$$\tilde b(u,v):= c'(u \ind_{[0,1]}(u) +\ind_{]1,\infty[}(u)-v)\qquad\text{and}\qquad \sigma(u) :=\ind_{[0,1]}(u)\sqrt{\nu u(1-u)}$$ 
satisfy all assumptions of Theorem \ref{existencereal}.
Moreover, \eqref{variation of constants} shows that \eqref{eq: SPDE_general_shortnotation_Delay} holds with 
\begin{align*}
F(u)(t,y)&:=e^{-c't} c' \int_0^t e^{c's} u(s,y) \, \ddd s,\\
H(t,v_0)(y) &:= e^{-c't} v_0(y).
\end{align*}
Thus 
all conditions of Theorem \ref{existencereal} are fulfilled and we have existence of a $[0,1]^2$-valued weak solution $(u,v)$ with paths in $ C(]0,\infty[,C(\R,[0,1]))\times C([0,\infty[,B(\R,[0,1]))$ almost surely. This means in turn that we may get rid of the indicator functions and hence $(u,v)$ solves Equation \eqref{eq:SteppingStoneWithSeedbank}.
\end{proof}

\begin{remark}
Note that the preceding results become only notationally harder to prove if we consider larger systems of equations of the following form:\\
Let $d,\tilde d,r \in \N$ and denote by
\begin{align*}
   & b\colon  [0,\infty [ \times \R \times \R^{d+\tilde d} \to \R^d,
   & \tilde b \colon  [0,\infty [ \times \R \times \R^{ d+\tilde d} \to \R^{\tilde d}
\end{align*}
and 
\begin{align*}
    \sigma \colon  [0,\infty [ \times \R \times \R^{d+\tilde d} \to \R^{d \times r}
\end{align*}
measurable maps. Then we may consider the system of SPDEs
\begin{align} \label{eq: SPDE_general_shortnotation1}
    \partial_t u(t,x) &= \frac{\Delta}{2} u(t,x)+b(t,x,u(t,x),v(t,x))  +\sigma(t,x,u(t,x),v(t,x)) \dot W(t,x), \nonumber\\
    \partial_t v(t,x) &= \tilde b(t,x,u(t,x),v(t,x))
\end{align}
where $W=(W_k)_{k=1, \ldots , r}$ is a collection of independent $1$-dimensional space-time white noise processes.\\
Written component-wise, \eqref{eq: SPDE_general_shortnotation1} means
\begin{align} \label{eq: SPDE_general_shortnotationcoordinate}
    \partial_t u_i(t,x) &= \frac{\Delta}{2} u_i(t,x)+b_i(t,x,u(t,x),v(t,x))  + \sum_{k=1}^r \sigma_{ik}(t,x,u(t,x), v(t,x)) \dot W_k(t,x), \nonumber\\
    \partial_t v_j(t,x) &= \tilde b_j(t,x,u(t,x),v(t,x))
\end{align}
for $i\in\lbrace 1, \ldots, d \rbrace$ and $j\in\lbrace 1, \ldots ,\tilde d \rbrace$.
\end{remark}


\section{Uniqueness in law and duality}
\label{sec:Uniqueness_and_duality}

\noindent In this section we aim to establish a moment duality which in particular will yield uniqueness in law for Equation \eqref{eq:SteppingStoneWithSeedbank}. We follow the approach of Athreya and Tribe in \cite{AT00}.
Now recall that on the one hand it is a well-known result by Shiga (cf.\ \cite{S88}) that the SPDE without seedbank given by
\begin{align}
    \partial_t u(t,x) &= \frac{\Delta}{2} u(t,x) + s  ((u^2(t,x)-u(t,x)) \notag \\
    &\quad- m_1 u(t,x)+ m_2 (1-u(t,x)) +  \sqrt{\nu u(t,x)(1-u(t,x)} \dot W (t,x)
\end{align}
satisfies a moment duality with a branching coalescing Brownian motion with killing $\tilde M$ and mutation compensator given by
 \begin{align*}
     \E\left[ \prod_{\beta \in \tilde I_0} u(t,\tilde M^\beta_0)   \right]&= \E\left[ \prod_{\beta \in \tilde I_t} u(0,\tilde M^\beta_t)   e^{-m_1 \int_0^t \abs{\tilde I_s} \, \ddd s}\right] 
 \end{align*}
where $\tilde I_t$ is the index set of particles that are alive at time $t\geq 0$.
On the other hand, in the seed bank diffusion case given by Equation \eqref{eq:seed_bank_diffusion} it has been established that the dual process is an ``on/off" version of a (Kingman-)coalescent (cf.\ \cite{BEGCKW15}).\\
In lieu of this we have defined a combination of all the previous mechanisms by allowing the movement of the particles of Shiga's dual process to also be subject to an additional ``on/off" mechanism as in the on/off coalescent case in Definition \ref{defn:dualprocess}. Recall that we denote by $M=(M_t)_{t \geq 0}$ an on/off BCBM with killing and that $I_t$ and $J_t$ are the index sets of active and dormant particles, respectively, at time $t \geq 0$.

\begin{remark}
In the discrete setting the proof of duality can usually be reduced to a simple generator  calculation using the arguments in \cite[pages 188-190]{EK86}. However, due to the involvement of the collision local time stemming from the coalescence mechanism in the dual process, the generator of the dual process can only be defined formally. Hence, we use a regularization procedure to still be able to use the main ideas from \cite{EK86}.
\end{remark}

\begin{proof}[Proof of Theorem \ref{thm:duality}]
Consider for $\eps >0$ the Gaussian heat kernel $\rho_\eps (x)= G(\eps, x,0)\break=\frac{1}{\sqrt{2 \pi \eps}}\exp\left(-\frac{x^2}{2\eps} \right)$ and set
\begin{align*}
    u_\eps (t,y) = \int_\R u (t,x) \rho_\eps(x-y) \, \ddd x 
\end{align*}
and note that $u_\eps (t , \cdot )$ is smooth for each $t \geq 0$. Then by Definition \ref{def: weaksolution} $u_\eps $ satisfies the integral equation 
\begin{align*}
    u_\eps (t,x)  &= u_\eps(0,x)+ \int_0^t \int_\R  \frac{1}{2}   u (s,y) \Delta \rho_\eps (x-y) \, \ddd y \, \ddd s+  m_2 \int_0^t\int_\R (1-u (s,y)) \rho_\eps (x-y) \, \ddd y \, \ddd s\\
    &\quad -m_1 \int_0^t \int_\R  u (s,y) \rho_\eps(x-y) \, \ddd y \, \ddd s -s  \int_0^t\int_\R (1-  u (s,y))  u (s,y) \rho_\eps (x-y) \, \ddd y \, \ddd s\\
    &\quad +c \int_0^t\int_\R ( v(s,y)-  u (s,y)) \rho_\eps(x-y) \, \ddd y \, \ddd s\\
    &\quad +  \sqrt{\nu}\int_0^t \int_\R \rho_\eps(x-y) \sqrt{  u(s,y) (1- u(s,y))} \, W (\ddd s,\ddd y) \nonumber\\
      &= u_\eps(0,x)+ \int_0^t \frac{1}{2} \Delta  u_\eps (s,x) +  m_2  (1-u_\eps (s,x)) -m_1   u_\eps (s,x) \, \ddd s  \\
      &\quad-s  \int_0^t\int_\R (1-  u (s,y))  u (s,y) \rho_\eps (x-y) \, \ddd y \, \ddd s\nonumber\\
      &\quad +c \int_0^t v_\eps (t,x)-  u_\eps (t,x) \, \ddd s
    +  \sqrt{\nu }\int_0^t \int_\R \rho_\eps(x-y) \sqrt{ u(s,y) (1- u(s,y))} \,  W (\ddd s,\ddd y) \nonumber \\
    &=u_\eps(0,x)+  \int_0^t \frac{1}{2} \Delta  u_\eps (s,x) +  m_2  (1-u_\eps (s,x)) -m_1   u_\eps (s,x) \, \ddd s  -s\int_0^t  b_\eps(u,s,x) \, \ddd s\nonumber\\
     &\quad +c \int_0^t ( v_\eps (s,x)-  u_\eps (s,x)) \, \ddd s  +  \sqrt{\nu}\int_0^t \int_\R \rho_\eps(x-y) \sqrt{  u(s,y) (1- u(s,y))} \, W (\ddd s,\ddd y) ,\nonumber \\
    v_\eps (t,x) &=v_\eps(0,x)+c' \int_0^t\int_\R ( u (s,y)-  v(s,y)) \rho_\eps(x-y) \, \ddd y \, \ddd s  \nonumber\\
    &= v_\eps(0,x)+  c' \int_0^t u_\eps (t,x)-  v_\eps (t,x) \, \ddd s
\end{align*}
where $b_\eps(u,s,x)\defeq  \int_\R (1-u(s,y))u(s,y) \rho_\eps (x-y) \, \ddd y $.
Note that the above two quantities $u_\eps$ and $v_\eps$ are semimartingales. Thus, taking $n,m \in \N$ and choosing arbitrary points $x_1, \ldots, x_n \in \R$ and $y_1 , \ldots , y_m \in \R$ we see by an application of  It\^{o}'s formula to the $C^2$ map
\[(x_1,\ldots,x_n,y_1,\ldots,y_m)\mapsto\prod_{i=1}^{n} x_i \prod_{j=1}^{m}y_j \]
that after taking expectations
\begin{align*}
	&\E\left[  \prod_{i=1}^n u_\eps (t,x_i) \prod_{j=1}^m v_\eps (t, y_j) \right] -\E\left[ \prod_{i=1}^n u_\eps (0,x_i) \prod_{j=1}^m v_\eps (0, y_j) \right]\nonumber\\
	&\quad = \E \left[ \int_0^t \sum_{i=1}^n \prod_{k=1, k \neq i}^n u_\eps (s,x_k) \prod_{j=1}^m v_\eps (s, y_j) \right.\nonumber\\
	&\quad \quad \quad \quad \quad \quad\qquad \qquad\qquad\qquad \times\left.\left( \frac{1}{2} \Delta  u_\eps (s,x_i) +  m_2  (1-u_\eps (s,x_i)) -m_1   u_\eps (s,x_i)  \right) \, \ddd s \right]\nonumber\\
	&\quad \quad+ c\E \left[  \int_0^t \sum_{i=1}^n \prod_{k=1, k \neq i}^n u_\eps (s,x_k) \prod_{j=1}^m v_\eps (s, y_j) \left(  v_\eps (s,x_i)-  u_\eps (s,x_i)  \right) \, \ddd s \right]\nonumber\\
	&\quad \quad- s\E \left[  \int_0^t \sum_{i=1}^n \prod_{k=1, k \neq i}^n u_\eps (s,x_k) \prod_{j=1}^m v_\eps (s, y_j)     b_\eps(u,s,x_i)  \, \ddd s \right]\nonumber\\
	&\quad \quad  + c'\E \left[ \int_0^t \sum_{j=1}^m \prod_{i=1}^n u_\eps (s,x_i) \prod_{k=1,k \neq j }^m v_\eps (s, y_k) \left(   u_\eps (s,y_j)-  v_\eps (s,y_j)  \right) \, \ddd s \right]\nonumber\\
	&\quad \quad +\frac{1}{2}\nu \E \left[\int_0^t \sum_{i=1,l=1, i\neq l}^n \prod_{k\in \lbrace 1, \ldots , n \rbrace \setminus \lbrace i,l \rbrace}^m u_\eps (s,x_k) \prod_{j=1}^m v_\eps (s,y_j) \right.\nonumber\\
	&\quad \quad \quad \quad \quad \quad \qquad\qquad\qquad\qquad \times\left.\int_\R \rho_\eps(z-x_i) \rho_\eps(z-x_l)  \sigma^2(u(s,z))\, \ddd z \, \ddd s\right]
\end{align*}
where $\sigma(x)=\sqrt{ x(1-x)}$.
Now, we replace the $x_i$ and $y_j$ by an independent version of our dual process taken at a time $r\geq 0$ and multiply by the independent quantity $K(r):=e^{-m_1 \int_0^r |I_s| \, \ddd s}$. This gives 
\begin{align} \label{eq:ItoforSPDE}
	&\E\left[K(r) \prod_{\beta \in I_r} u_\eps (t,M^\beta_r) \prod_{\gamma \in J_r} v_\eps (t, M^\gamma_r) \right] -\E\left[ K(r)\prod_{\beta \in I_r} u_\eps (0,M^\beta_r) \prod_{\gamma \in J_r} v_\eps (0, M^\gamma_r) \right]\nonumber\\
	&\quad = \E \left[ K(r) \int_0^t \sum_{\beta \in I_r} \prod_{\delta \in I_r \setminus \lbrace \beta \rbrace} u_\eps (s,M^\delta_r) \prod_{\gamma \in J_r} v_\eps (s, M^\gamma_r) \right.\nonumber\\
	& \qquad\qquad\qquad\qquad\qquad\qquad\qquad\times\left.\left( \frac{1}{2} \Delta  u_\eps (s,M^\beta_r) +  m_2  (1-u_\eps (s,M^\beta_r)) -m_1   u_\eps (s,M^\beta_r)  \right) \, \ddd s \right]\nonumber\\
	&\qquad+ c\E \left[ K(r) \int_0^t \sum_{\beta \in I_r} \prod_{\delta \in I_r \setminus \lbrace \beta \rbrace} u_\eps (s,M^\delta_r) \prod_{\gamma \in J_r} v_\eps (s, M^\gamma_r) \right.\nonumber \\
	&\qquad\qquad\qquad\qquad\qquad\qquad\qquad \times\left. \left(   v_\eps (s,M^\beta_r)-  u_\eps (s,M^\beta_r)\right) \, \ddd s \right]\nonumber\\
	&\qquad- s\E \left[ K(r) \int_0^t \sum_{\beta \in I_r} \prod_{\delta \in I_r \setminus \lbrace \beta \rbrace} u_\eps (s,M^\delta_r) \prod_{\gamma \in J_r} v_\eps (s, M^\gamma_r) \right.\nonumber \\
	&\qquad\qquad\qquad\qquad\qquad\qquad\qquad \times\left.    b_\eps(u,s,M^\beta_r)  \, \ddd s \right]\nonumber\\
	&\qquad  +c' \E \left[ K(r) \int_0^t \sum_{\gamma \in J_r}\prod_{\beta \in I_r} u_\eps (s,M^\beta_r) \prod_{\delta \in J_r \setminus \lbrace \gamma \rbrace} v_\eps (s, M^\delta_r) \left(    u_\eps (s,M^\gamma_r)) -v_\eps (s,M^\gamma_r) \right) \, \ddd s \right]\nonumber\\
	&\qquad + \frac{1}{2}\nu \E \left[K(r)\int_0^t \sum_{\beta, \delta \in I_r, \beta \neq \delta} \prod_{\phi \in I_r \setminus \lbrace\beta, \delta \rbrace} u_\eps (s,M^\phi_r) \prod_{\gamma \in J_r} v_\eps (s,M^\gamma_r)\right.\nonumber\\
	&\qquad\qquad\qquad\qquad\qquad\qquad\qquad\times \left.\int_\R \rho_\eps(z-M^\beta_r) \rho_\eps(z-M^\delta_r) \sigma^2(u(s,z)) \, \ddd z \, \ddd s\right].
\end{align}
Further, since we have the following integrable upper bound
\begin{align*}
&K(r)\int_0^t \sum_{\beta, \delta \in I_r, \beta \neq \delta} \prod_{\phi \in I_r \setminus \lbrace\beta, \delta \rbrace} u_\eps (s,M^\phi_r) \prod_{\gamma \in J_r} v_\eps (s,M^\gamma_r) \int_\R \rho_\eps(z-M^\beta_r) \rho_\eps(z-M^\delta_r)\sigma^2(u(s,z)) \, \ddd z \, \ddd s \\
&\quad \leq C(t,\eps) I_r
\end{align*}
we may use Fubini's theorem and similar bounds (which are possibly independent of $\eps$) for the remaining quantities to justify that the terms in \eqref{eq:ItoforSPDE} are finite. Note that this also allows for applications of the dominated convergence theorem later on.\\
On the other hand, for any $C^2$-functions $h,g$ we see by adding and substracting the compensators of the jumps 

\begin{align*}
&\E\left[ \prod_{\beta \in I_t}h(M^\beta_t) \prod_{\gamma \in J_t} g(M^\gamma_t) \right] -\E\left[ \prod_{i=1}^n h(x_i) \prod_{j=1}^m g( y_i) \right]\nonumber\\
&\quad= \E\left[ \int_0^t \sum_{\beta \in I_s}  \prod_{\delta \in I_s, \delta \neq \beta}h(M^\delta_s) \prod_{\gamma \in J_s}g(M^\gamma_s) \frac{1}{2} \Delta h(M^\beta_s) \, \ddd s\right]\nonumber\\
&\quad \quad +s\E\left[ \int_0^t \sum_{\beta \in I_s}  \prod_{\delta \in I_s, \delta \neq \beta}h(M^\delta_s) \prod_{\gamma \in J_s}g(M^\gamma_s) (h^2(M^\beta_s)-h(M^\beta_s))  \, \ddd s \right]\nonumber\\
&\quad \quad +c\E\left[ \int_0^t \sum_{\beta \in I_s}  \prod_{\delta \in I_s, \delta \neq \beta}h(M^\delta_s) \prod_{\gamma \in J_s}g(M^\gamma_s) (g(M_s^\beta)-h(M^\beta_s))  \, \ddd s \right]\nonumber\\
&\quad \quad +c'\E\left[ \int_0^t \sum_{\gamma \in J_s}  \prod_{\beta \in I_s} h(M^\beta_s) \prod_{\delta \in J_s, \delta \neq \gamma}g(M^\delta_s) (h(M_s^\gamma)-g(M^\gamma_s))  \, \ddd s \right]\nonumber\\
&\quad \quad + m_2\E\left[ \int_0^t \sum_{\beta \in I_s}  \prod_{\delta \in I_s, \delta \neq \beta}h(M^\delta_s) \prod_{\gamma \in J_s}g(M^\gamma_s) (1-h(M^\beta_s))  \, \ddd s \right]\nonumber\\
&\quad \quad +\frac{1}{4}\nu\E\left[ \int_0^t \sum_{\beta, \delta \in I_s, \beta \neq \delta}  \prod_{\phi \in I_s\setminus \lbrace \beta, \delta \rbrace}h(M^\phi_s) \prod_{\gamma \in J_s}g(M^\gamma_s)  (h(M^\delta_s)-h(M^\beta_s)h(M^\delta_s))  \, \ddd L^{\beta,\delta}_s \right]
\end{align*}
where we recall from Definition \ref{defn:dualprocess} that $L^{\beta,\delta}$ is the local time of $M^\beta-M^\delta$ at $0$ whenever both particles are active. 
By the integration by parts formula we then see including the factor $K(t)$
\begin{align*}
&\E\left[K(t) \prod_{\beta \in I_t}h(M^\beta_t) \prod_{\gamma \in J_t} g(M^\gamma_t) \right] -\E\left[ K(0) \prod_{i=1}^n h(x_i) \prod_{j=1}^m g( y_i) \right]\\
&\quad= \E\left[ \int_0^t K(s)\sum_{\beta \in I_s}  \prod_{\delta \in I_s, \delta \neq \beta}h(M^\delta_s) \prod_{\gamma \in J_s}g(M^\gamma_s) \frac{1}{2} \Delta h(M^\beta_s) \, \ddd s\right]\\
&\quad \quad +s\E\left[ \int_0^t K(s) \sum_{\beta \in I_s}  \prod_{\delta \in I_s, \delta \neq \beta}h(M^\delta_s) \prod_{\gamma \in J_s}g(M^\gamma_s) (h^2(M^\beta_s)-h(M^\beta_s))  \, \ddd s \right]\\
&\quad \quad +c\E\left[ \int_0^t K(s)\sum_{\beta \in I_s}  \prod_{\delta \in I_s, \delta \neq \beta}h(M^\delta_s) \prod_{\gamma \in J_s}g(M^\gamma_s) (g(M^\beta_s)-h(M^\beta_s))  \, \ddd s \right]\\
&\quad \quad +c'\E\left[ \int_0^t K(s) \sum_{\gamma \in J_s}  \prod_{\beta \in I_s} h(M^\beta_s) \prod_{\delta \in J_s, \delta \neq \gamma}g(M^\delta_s) (h(M^\gamma_s)-g(M^\gamma_s))  \, \ddd s \right]\\
&\quad \quad +m_2 \E\left[ \int_0^t K(s) \sum_{\beta \in I_s}  \prod_{\delta \in I_s, \delta \neq \beta}h(M^\delta_s) \prod_{\gamma \in J_s}g(M^\gamma_s) (1-h(M^\beta_s))  \, \ddd s \right]\\
&\quad \quad +\frac{1}{4}\nu\E\left[ \int_0^t K(s) \sum_{\beta, \delta \in I_s, \beta \neq \delta}  \prod_{\phi \in I_s\setminus \lbrace \beta, \delta \rbrace}h(M^\phi_s) \prod_{\gamma \in J_s}g(M^\gamma_s)  (h(M^\delta_s)-h(M^\beta_s)h(M^\delta_s))  \, \ddd L^{\beta,\delta}_s \right]\\
&\quad \quad + \E\left[ \int_0^t K(s) \prod_{\beta \in I_s}h(M^\beta_s) \prod_{\gamma \in J_s} g(M^\gamma_s) (-m_1 \abs{I_s}) \, \ddd s\right].
\end{align*}
Further, note that that for each $\eps >0$ and $r\geq 0$ the maps $u_\eps (r,\cdot),\, v_\eps (r, \cdot)$  are bounded and smooth as they originate from mollifying with the heat kernel. Thus, replacing $h$ and $g$ with the independent quantities $u_\eps (r,\cdot)$ and $v_\eps (r,\cdot)$ we have
\begin{align*}
&\E\left[K(t) \prod_{\beta \in I_t}u_\eps (r,M^\beta_t) \prod_{\gamma \in J_t} v_\eps (r,M^\gamma_t) \right] -\E\left[ K(0) \prod_{i=1}^n u_\eps (r,x_i) \prod_{j=1}^m v_\eps (r, y_j) \right]\\
&\quad= \E\left[ \int_0^t K(s)\sum_{\beta \in I_s}  \prod_{\delta \in I_s, \delta \neq \beta}u_\eps (r,M^\delta_s) \prod_{\gamma \in J_s}v_\eps (r,M^\gamma_s) \frac{1}{2} \Delta u_\eps (r,M^\beta_s) \, \ddd s\right]\\
&\quad \quad +s\E\left[ \int_0^t K(s) \sum_{\beta \in I_s}  \prod_{\delta \in I_s, \delta \neq \beta}u_\eps (r,M^\delta_s) \prod_{\gamma \in J_s}v_\eps (r,M^\gamma_s) (u_\eps ^2(r,M^\beta_s)-u_\eps (r,M^\beta_s))  \, \ddd s \right]\\
&\quad \quad +c\E\left[ \int_0^t K(s)\sum_{\beta \in I_s}  \prod_{\delta \in I_s, \delta \neq \beta}u_\eps (r,M^\delta_s) \prod_{\gamma \in J_s}v_\eps (r,M^\gamma_s) (v_\eps (r,M^\beta_s)-u_\eps (r,M^\beta_s))  \, \ddd s \right]\\
&\quad \quad +c'\E\left[ \int_0^t K(s) \sum_{\gamma \in J_s}  \prod_{\beta \in I_s} u_\eps (r,M^\beta_s) \prod_{\delta \in J_s, \delta \neq \gamma}v_\eps (r,M^\delta_s) (u_\eps (r,M^\gamma_s)-v_\eps (r,M^\gamma_s))  \, \ddd s \right]\\
&\quad \quad + m_2\E\left[ \int_0^t K(s) \sum_{\beta \in I_s}  \prod_{\delta \in I_s, \delta \neq \beta}u_\eps (r,M^\delta_s) \prod_{\gamma \in J_s}v_\eps (r,M^\gamma_s) (1-u_\eps (r,M^\beta_s))  \, \ddd s \right]\\
&\quad \quad +\frac{1}{4}\nu \E\left[ \int_0^t K(s) \sum_{\beta, \delta \in I_s, \beta \neq \delta}  \prod_{\phi \in I_s\setminus \lbrace \beta, \delta \rbrace}u_\eps (r,M^\phi_s) \prod_{\gamma \in J_s}v_\eps (r,M^\gamma_s)\right.\\
&\qquad\qquad\qquad\qquad\qquad\qquad\qquad\times \left. (u_\eps (r,M^\delta_s)-u_\eps (r,M^\beta_s)u_\eps (r,M^\delta_s))  \, \ddd L^{\beta,\delta}_s \right]\\
&\quad \quad + \E\left[ \int_0^t K(s) \prod_{\beta \in I_s}u_\eps (r,M^\beta_s) \prod_{\gamma \in J_s} v_\eps (r,M^\gamma_s) (-m_1 \vert I_s\vert ) \, \ddd s\right].
\end{align*}
Finally, define
\begin{align*}
k(t,s,\eps ) =  \E\left[\prod_{\beta \in I_s} u_\eps (t,M_s^\beta) \prod_{\gamma \in J_s} v_\eps (t,M^\gamma_s)\right].
\end{align*}
Then we may follow the idea from \cite{EK86} and calculate for $t\geq 0$ by substituting with our previously calculated quantities
\begin{align}
&\int_0^t k(r,0,\eps )-k(0,r, \eps) \, \ddd r \nonumber\\
&\quad = \int_0^t  k(t-r,r,\eps)-k(0,r,\eps) \, \ddd r - \int_0^tk(r,t-r,\eps )- k(r,0,\eps ) \, \ddd r \nonumber\\
&\quad = \E \left[  \int_0^t \int_0^{t-r}K(r) \sum_{\beta \in I_r} \prod_{\delta \in I_r \setminus \lbrace \beta \rbrace} u_\eps (s,M^\delta_r) \prod_{\gamma \in J_r} v_\eps (s, M^\gamma_r)  \frac{1}{2} \Delta  u_\eps (s,M^\beta_r)   \, \ddd s\, \ddd r \right]\nonumber\\
&\quad\quad -\E\left[ \int_0^t \int_0^{t-r} K(s)\sum_{\beta \in I_s}  \prod_{\delta \in I_s, \delta \neq \beta}u_\eps (r,M^\delta_s) \prod_{\gamma \in J_s}v_\eps (r,M^\gamma_s) \frac{1}{2} \Delta u_\eps (r,M^\beta_s) \, \ddd s \, \ddd r\right]\nonumber\\
&\quad - s\E \left[   \int_0^t \int_0^{t-r}K(r) \int_0^{t-r}\sum_{\beta \in I_r} \prod_{\delta \in I_r \setminus \lbrace \beta \rbrace} u_\eps (s,M^\delta_r) \prod_{\gamma \in J_r} v_\eps (s, M^\gamma_r)    b_\eps(u,s,M_r^\beta)   \, \ddd s \, \ddd r \right]\nonumber\\
&\quad \quad   -s\E\left[ \int_0^t \int_0^{t-r} K(s) \sum_{\beta \in I_s}  \prod_{\delta \in I_s, \delta \neq \beta}u_\eps (r,M^\delta_s) \prod_{\gamma \in J_s}v_\eps (r,M^\gamma_s) (u_\eps ^2(r,M^\beta_s)-u_\eps (r,M^\beta_s))  \, \ddd s \, \ddd r \right]\nonumber\\
&\quad+ c\E \left[  \int_0^t \int_0^{t-r}K(r)  \sum_{\beta \in I_r} \prod_{\delta \in I_r \setminus \lbrace \beta \rbrace} u_\eps (s,M^\delta_r) \prod_{\gamma \in J_r} v_\eps (s, M^\gamma_r) \left(   v_\eps (s,M^\beta_r)-  u_\eps (s,M^\beta_r)  \right) \, \ddd s \, \ddd r \right]\nonumber\\
&\quad \quad  -c\E\left[ \int_0^t\int_0^{t-r} K(s)\sum_{\beta \in I_s}  \prod_{\delta \in I_s, \delta \neq \beta}u_\eps (r,M^\delta_s) \prod_{\gamma \in J_s}v_\eps (r,M^\gamma_s) (v_\eps (r,M^\beta_s)-u_\eps (r,M^\beta_s))  \, \ddd s  \, \ddd r\right]\nonumber\\
&\quad  + c'\E \left[  \int_0^t\int_0^{t-r}K(r)  \sum_{\gamma \in J_r}\prod_{\beta \in I_r} u_\eps (s,M^\beta_r) \prod_{\delta \in J_r \setminus \lbrace \gamma \rbrace} v_\eps (s, M^\delta_r) \left(     u_\eps (s,M^\gamma_r)-v_\eps (s,M^\gamma_r)  \right) \, \ddd s \, \ddd r \right]\nonumber\\
&\quad \quad -c'\E\left[ \int_0^t \int_0^{t-r} K(s) \sum_{\gamma \in J_s}  \prod_{\beta \in I_s} u_\eps (r,M^\beta_s) \prod_{\delta \in J_s, \delta \neq \gamma}v_\eps (r,M^\delta_s) (u_\eps (r,M^\gamma_s)-v_\eps (r,M^\gamma_s))  \, \ddd s \, \ddd r \right]\nonumber\\
&\quad + m_2\E \left[  \int_0^t \int_0^{t-r} K(r) \sum_{\beta \in I_r} \prod_{\delta \in I_r \setminus \lbrace \beta \rbrace} u_\eps (s,M^\delta_r) \prod_{\gamma \in J_r} v_\eps (s, M^\gamma_r)     (1-u_\eps (s,M^\beta_r))   \, \ddd s \, \ddd r\right]\nonumber\\
&\quad \quad -m_2\E\left[ \int_0^t \int_0^{t-r} K(s) \sum_{\beta \in I_s}  \prod_{\delta \in I_s, \delta \neq \beta}u_\eps (r,M^\delta_s) \prod_{\gamma \in J_s}v_\eps (r,M^\gamma_s) (1-u_\eps (r,M^\beta_s))  \, \ddd s\, \ddd r \right]\label{eq:death}\\
&\quad + \frac{1}{2}\nu\E \left[\int_0^t \int_0^{t-r}K(r)  \sum_{\beta, \delta \in I_r, \beta \neq \delta} \prod_{\phi \in I_r \setminus \lbrace\beta, \delta \rbrace} u_\eps (s,M^\phi_r) \prod_{\gamma \in J_r} v_\eps (s,M^\gamma_r)\right.\nonumber\\
&\qquad\qquad\qquad\qquad\qquad\qquad\times\left. \int_\R \rho_\eps(z-M^\beta_r) \rho_\eps(z-M^\delta_r)\sigma^2(u(s,z)) \, \ddd z \, \ddd s \, \ddd r\right] \nonumber\\
&\quad \quad  -\frac{1}{4}\nu\E\left[ \int_0^t \int_0^{t-r} K(s) \sum_{\beta, \delta \in I_s, \beta \neq \delta}  \prod_{\phi \in I_s\setminus \lbrace \beta, \delta \rbrace}u_\eps (r,M^\phi_s) \prod_{\gamma \in J_s}v_\eps (r,M^\gamma_s) \right.\nonumber\\
&\qquad\qquad\qquad\qquad\qquad\qquad\times\left.  (u_\eps (r,M^\delta_s)-u_\eps (r,M^\beta_s)u_\eps (r,M^\delta_s))  \, \ddd L^{\beta,\delta}_s \, \ddd r \right]\nonumber\\
&\quad +\E \left[   \int_0^t \int_0^{t-r}K(r) \sum_{\beta \in I_r} \prod_{\delta \in I_r \setminus \lbrace \beta \rbrace} u_\eps (s,M^\delta_r) \prod_{\gamma \in J_r} v_\eps (s, M^\gamma_r) \left(  -m_1   u_\eps (s,M^\beta_r)  \right)  \, \ddd s \, \ddd r\right]\nonumber\\
&\quad \quad  - \E\left[ \int_0^t\int_0^{t-r}  K(s) \prod_{\beta \in I_s}u_\eps (r,M^\beta_s) \prod_{\gamma \in J_s} v_\eps (r,M^\gamma_s) (-m_1 \vert I_s\vert) \, \ddd s \, \ddd r\right]\nonumber\\
&= -s\E \left[   \int_0^t \int_0^{t-r}K(r) \int_0^{t-r}\sum_{\beta \in I_r} \prod_{\delta \in I_r \setminus \lbrace \beta \rbrace} u_\eps (s,M^\delta_r) \prod_{\gamma \in J_r} v_\eps (s, M^\gamma_r)    b_\eps(u,s,M_r^\beta)     \, \ddd s \, \ddd r \right]\\
&\quad \quad   -s\E\left[ \int_0^t \int_0^{t-r} K(s) \sum_{\beta \in I_s}  \prod_{\delta \in I_s, \delta \neq \beta}u_\eps (r,M^\delta_s) \prod_{\gamma \in J_s}v_\eps (r,M^\gamma_s) (u_\eps ^2(r,M^\beta_s)-u_\eps (r,M^\beta_s))  \, \ddd s \, \ddd r \right]\label{problem0}\\
&\quad + \frac{1}{2}\nu\E \left[\int_0^t \int_0^{t-r}K(r)  \sum_{\beta, \delta \in I_r, \beta \neq \delta} \prod_{\phi \in I_r \setminus \lbrace\beta, \delta \rbrace} u_\eps (s,M^\phi_r) \prod_{\gamma \in J_r} v_\eps (s,M^\gamma_r) \right. \nonumber \\
&\qquad\qquad\qquad\qquad\qquad\qquad\qquad\times\left. \int_\R \rho_\eps(z-M^\beta_r) \rho_\eps(z-M^\delta_r) \sigma^2(u(s,z)) \, \ddd z \, \ddd s \, \ddd r\right] \label{problem1}\\
&\quad \quad  -\frac{1}{4}\nu\E\left[ \int_0^t \int_0^{t-r} K(s) \sum_{\beta, \delta \in I_s, \beta \neq \delta}  \prod_{\phi \in I_s\setminus \lbrace \beta, \delta \rbrace}u_\eps (r,M^\phi_s) \prod_{\gamma \in J_s}v_\eps (r,M^\gamma_s)\right.\nonumber\\
&\qquad\qquad\qquad\qquad\qquad\qquad\qquad\times\left.  (u_\eps (r,M^\delta_s)-u_\eps (r,M^\beta_s)u_\eps (r,M^\delta_s))  \, \ddd L^{\beta,\delta}_s \, \ddd r \right].\label{problem2}
\end{align}
Now, again by Fubini's theorem, taking the limit as $\eps \to 0$ we see that all the quantities except \eqref{problem1} and \eqref{problem2} vanish. Note that we used properties of mollifications (since $u$ is continuous we have convergence everywhere) and the dominated convergence theorem to justify this. Moreover, the use of Lemma \ref{lemma1} allows us to argue that the same is true for the remaining two terms. By the (left-)continuity of the the processes $u,v,M$ in $r$ we finally see that after differentiation
\begin{align*}
k(t,0,0)=k(0,t,0)
\end{align*}
as desired.
\end{proof}

\begin{lemma} \label{lemma1}
In the setting of Theorem \ref{thm:duality} we have
\begin{align*}
&\frac{1}{2}\nu\E \left[\int_0^t \int_0^{t-r}K(r)  \sum_{\beta, \delta \in I_r, \beta \neq \delta} \prod_{\phi \in I_r \setminus \lbrace\beta, \delta \rbrace} u_\eps (s,M^\phi_r) \prod_{\gamma \in J_r} v_\eps (s,M^\gamma_r)\right.\nonumber\\
&\qquad\qquad\qquad\qquad\qquad\qquad\qquad\times\left. \int_\R \rho_\eps(z-M^\beta_r) \rho_\eps(z-M^\delta_r)\sigma^2(u(s,z)) \, \ddd z \, \ddd s \, \ddd r\right] \\
&\quad \quad  \to \frac{1}{4}\nu\E\left[ \int_0^t \int_0^{t-r} K(s) \sum_{\beta, \delta \in I_s, \beta \neq \delta}  \prod_{\phi \in I_s\setminus \lbrace \beta, \delta \rbrace}u (r,M^\phi_s) \prod_{\gamma \in J_s}v (r,M^\gamma_s) \right. \nonumber\\
&\qquad\qquad\qquad\qquad\qquad\qquad\times \left.  (u (r,M^\delta_s)-u (r,M^\beta_s)u (r,M^\delta_s))  \, \ddd L^{\beta,\delta}_s \, \ddd r \right]
\end{align*}
as $\eps \to 0$. 
\end{lemma}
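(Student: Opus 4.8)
The plan is to recognise the left-hand side as a mollified occupation-time functional and to show that, as $\eps\to0$, it converges to the corresponding collision-local-time functional, following the regularisation strategy of Athreya and Tribe \cite{AT00}. A preliminary bookkeeping step: since the integration region $\{(r,s)\in[0,\infty[^2:r+s\le t\}$ is symmetric under $r\leftrightarrow s$, Fubini's theorem lets me rename the two Lebesgue variables on the left-hand side, so that the dual process $M$ is sampled at the inner time $s$ and the mollified fields $u_\eps,v_\eps$ at the outer time $r$. After this relabelling the left-hand side reads
\[
\tfrac12\nu\,\E\!\left[\int_0^t\!\!\int_0^{t-r}\!K(s)\!\!\sum_{\beta,\delta\in I_s,\,\beta\neq\delta}\ \prod_{\phi\in I_s\setminus\{\beta,\delta\}}\!\!u_\eps(r,M^\phi_s)\prod_{\gamma\in J_s}\!v_\eps(r,M^\gamma_s)\Big(\int_\R\rho_\eps(z-M^\beta_s)\rho_\eps(z-M^\delta_s)\sigma^2(u(r,z))\,\ddd z\Big)\ddd s\,\ddd r\right],
\]
which is in the same shape as the right-hand side and can be compared pair by pair.

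Next I would split off the collision kernel. Using the Gaussian identity $\rho_\eps(z-x)\rho_\eps(z-y)=\rho_{2\eps}(x-y)\,\rho_{\eps/2}\!\big(z-\tfrac{x+y}{2}\big)$ the inner $z$-integral factorises as $\rho_{2\eps}(M^\beta_s-M^\delta_s)\,\Psi_\eps(r,s)$ with $\Psi_\eps(r,s)=\int_\R\rho_{\eps/2}\!\big(z-\tfrac{M^\beta_s+M^\delta_s}{2}\big)\sigma^2(u(r,z))\,\ddd z$. For $r>0$ the map $z\mapsto\sigma^2(u(r,z))=u(r,z)(1-u(r,z))$ is continuous and bounded by $1/4$ (by Theorem \ref{comparison}), so $\Psi_\eps(r,s)\to\sigma^2\!\big(u(r,\tfrac{M^\beta_s+M^\delta_s}{2})\big)$ locally uniformly in $s$; and on the (random) support of $\ddd L^{\beta,\delta}$ one has $M^\beta_s=M^\delta_s$, so this limit equals $u(r,M^\delta_s)(1-u(r,M^\beta_s))=u(r,M^\delta_s)-u(r,M^\beta_s)u(r,M^\delta_s)$, which is exactly the factor on the right-hand side. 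Simultaneously, for $r>0$, $u_\eps(r,\cdot)\to u(r,\cdot)$ locally uniformly (continuity of $u(r,\cdot)$) and $v_\eps(r,\cdot)\to v(r,\cdot)$ Lebesgue-a.e.; since the dormant positions $M^\gamma_s$ a.s.\ avoid the corresponding null set, the product $\prod_\phi u_\eps(r,M^\phi_s)\prod_\gamma v_\eps(r,M^\gamma_s)$ converges along the dual trajectory.

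It then remains to invoke the classical approximation of the collision local time. The difference process $Z^{\beta\delta}:=M^\beta-M^\delta$ is, on the set of times where both particles are active, a Brownian motion time-changed by $\ddd\langle Z^{\beta\delta}\rangle_s=2\,\ind(\text{both active})\,\ddd s$ (and constant otherwise), and $L^{\beta,\delta}$ is its semimartingale local time at $0$; standard arguments (explicit Gaussian second-moment estimates) give, for any bounded piecewise-continuous $f$,
\[
\int_0^{T}\rho_{2\eps}\big(Z^{\beta\delta}_s\big)f(s)\,\ddd s\ \xrightarrow{\eps\to0}\ \tfrac12\int_0^{T}f(s)\,\ddd L^{\beta,\delta}_s\qquad\text{a.s.\ and in }L^2,
\]
the factor $\tfrac12$ coming from $\ddd\langle Z^{\beta\delta}\rangle=2\,\ddd s$ on the support of $\ddd L^{\beta,\delta}$ — this is what turns the prefactor $\tfrac12\nu$ into $\tfrac14\nu$. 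Applying this with $f(s)=K(s)\prod_{\phi}u(r,M^\phi_s)\prod_\gamma v(r,M^\gamma_s)$ (bounded, and piecewise continuous because the dual's index sets change at only finitely many times on $[0,t]$) and combining with the previous step yields a.s.\ convergence of the inner $s$-integral to the right-hand side's inner integral, for each fixed $r>0$ and $\beta\neq\delta$.

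Finally I would pass to the limit inside the outer expectation and $\ddd r$-integral by uniform integrability. Because $u_\eps,v_\eps\in[0,1]$ and $\sigma^2\le1/4$, the left-hand $s$-integrands are bounded, uniformly in $\eps$, by $\tfrac14\sum_{\beta\neq\delta\in I_s}\int_0^{t-r}\rho_{2\eps}(Z^{\beta\delta}_s)\,\ddd s$; explicit Gaussian computations show $\{\int_0^{t}\rho_{2\eps}(Z^{\beta\delta}_s)\,\ddd s\}_{\eps>0}$ is bounded in $L^2$, and the particle number of $M$ has finite moments of all orders, so the random variables appearing on the left are uniformly integrable and Vitali's convergence theorem upgrades the a.s.\ convergence to convergence of expectations, which is the claim. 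The main obstacle is precisely this third step together with the $L^2$-type control: one must know that the mollified occupation times converge to $\tfrac12 L^{\beta,\delta}$ quantitatively (not merely in law) and couple this with a weight $f$ that is only piecewise continuous because the dual process branches, coalesces and switches; the on/off mechanism additionally forces one to check that $Z^{\beta\delta}$ does not sit at $0$ during a dormant spell of positive length, which holds almost surely since the freezing positions have no atoms.
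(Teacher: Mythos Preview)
Your outline follows the same Athreya--Tribe regularisation strategy as the paper, and the initial Fubini relabelling together with the Gaussian product identity $\rho_\eps(z-x)\rho_\eps(z-y)=\rho_{2\eps}(x-y)\rho_{\eps/2}(z-\tfrac{x+y}{2})$ is a clean way to isolate the collision kernel (the paper instead does a change of variables $z\mapsto z+M^\delta_r$ and then invokes Tanaka's occupation-time formula to bring in $L^{\beta,\delta}_{r,x}$ for all levels $x$, which is equivalent).

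There are, however, two places where your sketch skips a genuine technical step that the paper carries out explicitly. First, the ``combining with the previous step'' is too quick: the weight multiplying $\rho_{2\eps}(Z^{\beta\delta}_s)$ is itself $\eps$-dependent (through $u_\eps,v_\eps$ and $\Psi_\eps$), so you cannot simply plug a fixed $f$ into the local-time approximation. The paper resolves this by a three-term triangle inequality (its displays \eqref{eq:firstterm}--\eqref{eq:thirdterm}): one term handles the spatial shift $z\neq 0$ in $\Psi_\eps$, one term strips off the mollifications $u_\eps\to u$, $v_\eps\to v$ factor by factor, and only the last term is the clean occupation-time $\to$ local-time limit with an $\eps$-independent integrand. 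Each error term is controlled using the Barlow--Yor bound $\sup_x L^{\beta,\delta}_{t,x}<\infty$ a.s.

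Second, your uniform-integrability argument for the sum over pairs is not complete: you would need $L^{1+\delta}$ control of $\sum_{\beta\neq\delta}\int_0^t\rho_{2\eps}(Z^{\beta\delta}_s)\,\ddd s$ uniformly in $\eps$, and combining finite moments of $|I_t|$ with $L^2$ bounds on individual occupation times does not give this directly (the pairs are not independent of the particle count). The paper sidesteps this entirely by first truncating the dual at $\tau_m$, the $m$-th birth time: with at most $m$ particles the pathwise argument goes through under dominated convergence, and afterwards the truncation is removed by showing that the modified dual (which stops branching, coalescing and dying after $\tau_m$ but continues as independent on/off Brownian motions) yields an error controlled by $\E[|I_t|^2\ind_{\{\tau_m\le t\}}]\to 0$. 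This truncation is the key device that makes the domination rigorous.
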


\begin{proof}[Proof of Lemma \ref{lemma1}]
We adapt and elaborate the proof of \cite{AT00}. Consider for each $m \in \N$ the time of the $m$-th birth $\tau_m$.
Assume for now that we may restrict ourselves to $[0, \tau_m]$. Now, we argue pathwise in $\omega$. Set for $\beta, \delta \in \lbrace 1, \ldots, m \rbrace$ after a change of variables
\begin{align*}
&\frac{1}{2}\int_0^t \int_0^{t-r} \ind_{\lbrace \beta, \delta \in I_r \rbrace}K(r) \prod_{\phi \in I_r \setminus \lbrace\beta, \delta \rbrace} u_\eps (s,M^\phi_r) \prod_{\gamma \in J_r} v_\eps (s,M^\gamma_r) \nonumber\\
&\qquad\qquad\qquad\qquad\qquad\qquad\qquad\times \int_\R \rho_\eps(z-M^\beta_r) \rho_\eps(z-M^\delta_r)\sigma^2(u(s,z)) \, \ddd z \, \ddd s \, \ddd r\\
&=\colon \frac{1}{2}  \int_0^t \int_\R \rho_\eps (z) \rho_\eps(z+ M^\delta_r-M^\beta_r) Z_r^{\beta,\delta, \eps}(z) \, \ddd z \, \ddd r.
\end{align*}
Here, we set 
\begin{align*}
    Z_r^{\beta,\delta, \eps}(z) &\defeq\int_0^{t-r} \ind_{\lbrace \beta, \delta \in I_r \rbrace}  K(r) \prod_{\phi \in I_r \setminus \lbrace\beta, \delta \rbrace} u_\eps (s,M^\phi_r) \prod_{\gamma \in J_r} v_\eps (s,M^\gamma_r) \sigma^2(u(s,z+M_r^\delta)) \, \ddd s, \\
    Z_r^{\beta,\delta}(z) &\defeq \int_0^{t-r} \ind_{\lbrace \beta, \delta \in I_r \rbrace}  K(r) \prod_{\phi \in I_r \setminus \lbrace\beta, \delta \rbrace} u (s,M^\phi_r) \prod_{\gamma \in J_r} v (s,M^\gamma_r) \sigma^2(u(s,z+M_r^\delta)) \, \ddd s.
\end{align*}
Now, by a modification of Tanaka's occupation time formula (cf.\ \cite{AT00}) we see that\footnote{Here we split the integral into the random time intervals on which both $M^\delta$ and $M^\beta$ are active and then apply Tanaka's formula. Moreover, note that the intersection local time of two on/off Brownian motions can always be dominated by the local time of two corresponding standard Brownian motions. This will also allow for applications of the local time inequalities by Barlow and Yor later in the proof.}
\begin{align*}
\frac{1}{2}\int_0^t \int_\R \rho_\eps (z) \rho_\eps(z+ M^\delta_r-M^\beta_r)  Z_r^{\beta,\delta, \eps}(z) \, \ddd z \, \ddd r = \frac{1}{4}  \int_\R \int_\R\int_0^t \rho_\eps (z) \rho_\eps(z+x)  Z_r^{\beta,\delta, \eps}(z)\, \ddd L^{\beta, \delta}_{r,x} \, \ddd x \, \ddd z.
\end{align*}
The goal for now is to replace $L^{\beta, \delta}_{r,x}$ by the local time at $0$ and get rid of the dependence of $Z$ on $z$ and $\eps$. To do so we first use the triangle inequality to see that 
\begin{align}
&\left\vert \int_\R \int_\R \int_0^t\rho_\eps (z) \rho_\eps(z+x) Z_r^{\beta,\delta, \eps}(z) \, \ddd L^{\beta, \delta}_{r,x} \, \ddd x \, \ddd z -\int_0^t  Z_r^{\beta, \delta} (0) \, \ddd L^{\beta, \delta}_r \right\vert \nonumber\\
&\leq \int_\R \int_\R\int_0^t \rho_\eps (z) \rho_\eps(z+x) \left|Z_r^{\beta,\delta, \eps}(z)-Z_r^{\beta,\delta , \eps}(0) \right|\, \ddd L^{\beta, \delta}_{r,x} \, \ddd x \, \ddd z\label{eq:firstterm}\\
&\qquad + \int_\R \int_\R\int_0^t \rho_\eps (z) \rho_\eps(z+x) \left|Z_r^{\beta,\delta, \eps}(0)-Z_r^{\beta,\delta}(0) \right|\, \ddd L^{\beta, \delta}_{r,x} \, \ddd x \, \ddd z\label{eq:secondterm}\\
&\qquad +\left| \int_\R \int_\R \int_0^t \rho_\eps (z) \rho_\eps(z+x) Z_r^{\beta,\delta}(0)\, \ddd L^{\beta, \delta}_{r,x} \, \ddd x \, \ddd z- \int_0^t  Z_r^{\beta,\delta}(0)\, \ddd L^{\beta, \delta}_{r} \right| \label{eq:thirdterm}.
\end{align}
First, note that $r \mapsto Z^{\beta,\delta,\eps}_r(z)$ is piece-wise continuous and uniformly bounded for each $z \in \R$ and that $z \mapsto Z^{\beta,\delta}_r(z)$ is continuous at $0$ uniformly in $r \leq t$ and $\eps>0$. Hence, for any $\eta>0$ there exists some $\zeta >0$ such that
$$\left|Z_r^{\beta,\delta, \eps}(z)-Z_r^{\beta,\delta, \eps}(0) \right|<\eta$$
whenever $|z|<\zeta$ uniformly in $r$ and $\eps$. Using that $L_{r,x}$ is uniformly bounded in $r\in [0,t],x\in \R$ by Barlow and Yor's local time inequalities from \cite{B81} we obtain for all $\eps>0$
$$ \int_\R \int_\R \int_0^t\ind_{\lbrace \vert z\vert <\zeta \rbrace}(z) \rho_\eps (z) \rho_\eps(z+x) \left|Z_r^{\beta,\delta, \eps}(z)-Z_r^{\beta,\delta, \eps}(0) \right|\, \ddd L^{\beta, \delta}_{r,x} \, \ddd x \, \ddd z< C\eta$$
for some constant $C>0$. Moreover, noting in addition that $Z^{\beta, \delta, \eps}$ is bounded uniformly in $r, \eps$ and $z$ we have for the remaining part of the integral
\begin{align*}
    & \int_\R \int_\R\int_0^t \ind_{\lbrace \vert z\vert >\zeta \rbrace}(z) \rho_\eps (z) \rho_\eps(z+x) \left|Z_r^{\beta,\delta, \eps}(z)-Z_r^{\beta,\delta, \eps}(0) \right|\, \ddd L^{\beta, \delta}_{r,x} \, \ddd x \, \ddd z \\
    &\qquad \leq C \int_\R \ind_{\lbrace \vert z\vert >\zeta \rbrace}(z)  \rho_\eps(z) \, \ddd z \to 0
\end{align*}
as $\eps \to 0$. Hence, we obtain that the entire term in Equation \eqref{eq:firstterm} vanishes as $\eps \to 0$.

For the term \eqref{eq:secondterm} we set for $\varphi \in \lbrace 1,\ldots , m\rbrace $ and $\varphi \neq \beta, \delta$
\begin{align*}
    Z_r^{\beta,\delta, \eps}(0) &=\int_0^{t-r} \ind_{\lbrace \beta, \delta \in I_r \rbrace}  K(r) \prod_{\phi \in I_r \setminus \lbrace\beta, \delta \rbrace} u_\eps (s,M^\phi_r) \prod_{\gamma \in J_r} v_\eps (s,M^\gamma_r) \sigma^2(u(s,M_r^\delta)) \, \ddd s\\
    &=  \int_0^{t-r}\ind_{\lbrace \beta, \delta, \phi \in I_r \rbrace}  K(r) \int_\R u(s,M_r^{\varphi }-y) \rho_\eps(y)\, \ddd y \\
    &\qquad \qquad \times\prod_{\phi \in I_r \setminus \lbrace\beta, \delta, \varphi \rbrace} u_\eps (s,M^\phi_r) \prod_{\gamma \in J_r} v_\eps (s,M^\gamma_r) \sigma^2(u(s,M_r^\delta)) \, \ddd s \\
    &= \colon  \int_\R \rho_\eps(y) \bar Z_r^{\beta, \delta, \varphi}(y)\, \ddd y .
\end{align*}
By the same argument as for the term \eqref{eq:firstterm} we see that as $\eps \to 0$ that
\begin{align*}
    &\int_\R \int_\R\int_0^t \rho_\eps (z) \rho_\eps(z+x) \left|Z_r^{\beta,\delta, \eps}(0)-\int_\R \rho_\eps(y) \bar Z_r^{\beta, \delta, \varphi}(0)\, \ddd y \right|\, \ddd L^{\beta, \delta}_{r,x} \, \ddd x \, \ddd z\\
    &\qquad \leq \int_\R \int_\R\int_0^t \rho_\eps (z) \rho_\eps(z+x) \int_\R \rho_\eps(y) \left|\bar Z_r^{\beta, \delta, \varphi}(y)- \bar Z_r^{\beta, \delta, \varphi}(0)\right|\, \ddd y \, \ddd L^{\beta, \delta}_{r,x} \, \ddd x \, \ddd z\\
    &\qquad \to 0.
\end{align*}
Iterating through the finitely many $u_\eps$ and $v_\eps$ terms we indeed obtain that the entire term \eqref{eq:secondterm} vanishes as $\eps \to 0$.

For the last term we take a Riemann sum approximation of $Z_r^{\beta, \delta}$, i.e. taking a sequence of partitions of $[0,t]$ given by $0= t_0 \leq  \cdots \leq  t_n =t $ with mesh size $\Delta_n \to 0$ as $n \to \infty$ we consider
\begin{align*}
\sum_{i=1}^n Z^{\beta, \delta }_{t_i}(0) (L_{t_i}^{\beta, \delta }-L_{t_{i-1}}^{\beta, \delta }).
\end{align*}
Recall that we may choose the partition in a way such that
\begin{align*}
\sum_{i=1}^n Z^{\beta, \delta }_{t_i}(0) \ind_{[t_{i-1}, t_i]} (r) \to Z^{\beta, \delta }_r(0)
\end{align*}
uniformly in $r \in [0,t]$ as $n \to \infty$. 
Thus, since on a set with probability one we have $\sup_{x \in \R} L^{\beta, \delta}_{s,x}  < \infty$ for each $s \in [0,t]$ we may deduce that uniformly in $x \in \R$ using the piece-wise continuity of $r \mapsto Z_r^{\beta, \delta}$ we have  
\begin{align*}
\left\vert\sum_{i=1}^n Z^{\beta, \delta }_{t_i}(0) (L_{t_i,x}^{\beta, \delta }-L_{t_{i-1},x}^{\beta, \delta }) - \int_0^t  Z_r^{\beta,\delta}(0)\, \ddd L^{\beta, \delta}_{r,x} \right\vert\to 0.
\end{align*}
Next, we see by the triangle inequality again that 
\begin{align*}
&\left|\int_0^t \int_\R \int_\R \rho_\eps (z) \rho_\eps(z+x) Z_r^{\beta,\delta}(0)\, \ddd L^{\beta, \delta}_{r,x} \, \ddd x \, \ddd z- \int_0^t  Z_r^{\beta,\delta}(0)\, \ddd L^{\beta, \delta}_{r} \right| \\
&\quad =\left|\int_0^t \int_\R  \rho_{2\eps} (x)  Z_r^{\beta,\delta}(0)\, \ddd L^{\beta, \delta}_{r,x} \, \ddd x-  \int_0^t  Z_r^{\beta,\delta}(0)\, \ddd L^{\beta, \delta}_{r} \right| \\
& \quad \leq \left|\int_0^t \int_\R  \rho_{2\eps} (x)  Z_r^{\beta,\delta}(0)\, \ddd L^{\beta, \delta}_{r,x} \, \ddd x - \sum_{i=1}^n \int_\R\rho_{2\eps}(x) Z_{t_i}^{\beta,\delta}(0)(L^{\beta, \delta}_{t_i,x}-L^{\beta, \delta}_{t_{i-1,x}}) \, \ddd x \right|\\
& \quad \quad + \left| \sum_{i=1}^n \int_\R\rho_{2\eps}(x)Z_{t_i}^{\beta,\delta}(0) (L^{\beta, \delta}_{t_i,x}-L^{\beta, \delta}_{t_{i-1,x}}) \, \ddd x - \sum_{i=1}^n Z_{t_i}^{\beta,\delta}(0) (L^{\beta, \delta}_{t_i}-L^{\beta, \delta}_{t_{i-1}}) \right|\\
& \quad \quad + \left|  \sum_{i=1}^n  Z_{t_i}^{\beta,\delta}(0)(L^{\beta, \delta}_{t_i}-L^{\beta, \delta}_{t_{i-1}}) -  \int_0^t Z_{r}^{\beta,\delta}(0) \, \ddd L_r^{\beta ,\delta}\right|
\end{align*}
holds true. Now for any $\eta >0$ choose $n$ large enough such that 
\begin{align*}
\left|  \sum_{i=1}^n  Z_{t_i}^{\beta,\delta
}(0)(L^{\beta, \delta}_{t_i,x}-L^{\beta, \delta}_{t_{i-1},x}) -  \int_0^t Z_{r}^{\beta,\delta}(0) \, \ddd L_{r,x}^{\beta ,\delta}\right|<\eta
\end{align*}
for all $x \in \R$. Then choose $\eps >0$ small enough to get
\begin{align*}
\left| \sum_{i=1}^n \int_\R\rho_{2\eps}(x)Z_{t_i}^{\beta,\delta, \eps}(0) (L^{\beta, \delta}_{t_i,x}-L^{\beta, \delta}_{t_{i-1,x}}) \, \ddd x - \sum_{i=1}^n Z_{t_i}^{\beta,\delta, \eps}(0) (L^{\beta, \delta}_{t_i}-L^{\beta, \delta}_{t_{i-1}}) \right| < \eta
\end{align*}
using the mollifying property of the heat kernel. 
This finally yields that indeed as $\eps \to 0$ we have that the term \eqref{eq:thirdterm} also vanishes. Combining the above we obtain path-wise in $\omega$ that as $\eps \to 0$
\begin{align*}
\int_0^t \int_\R \rho_\eps (z) \rho_\eps(z+ M^\gamma_r-M^\beta_r) Z_r^{\beta,\delta, \eps}(z) \, \ddd z \, \ddd r \to \int_0^t  Z_r^{\beta, \delta} (0) \, \ddd L^{\beta, \delta}_r .
\end{align*}
It now suffices to justify the exchange of limit and expectation. For this we invoke the dominated convergence theorem. In order to find a dominating function we calculate as follows:
\begin{align*}
& \int_0^t \sum_{\beta, \delta \in I_s , \beta\neq \delta}\int_\R \rho_\eps (z) \rho_\eps(z+ M^\delta_r-M^\beta_r) Z_r^{\beta,\delta, \eps}(z) \, \ddd z \, \ddd r \\
&\quad \leq C t \sum_{\beta =1}^m \sum_{\delta =1}^m \int_0^t \int_\R\int_0^{t-r}  \rho_{2\eps} (x) \, \ddd L^{\beta, \delta}_{s,x}\, \ddd x \, \ddd r \\
&\quad \leq C  t^2 \sum_{\beta =1}^m \sum_{\delta =1}^m \sup_{x \in \R}L^{\beta, \delta}_{t,x}
\end{align*}
for some constant $C>0$.
Then again using the fact that $\sup_{x \in \R}L^{\beta, \delta}_{t,x}$ is integrable the proof is concluded.

It remains to justify the restriction to the interval $[0,\tau_m]$ in the preceding argument. To do so we modify our dual process such that it stops branching, coalescing and dying at time $\tau_m$ but may perform independent on/off Brownian motions up until time $t$,
i.e. we set for all particles $\alpha$ whenever they exist at time $\tau_m$
\begin{align*}
    \bar{M}^\alpha_t=\begin{cases}
      M^\alpha_t, & \text{if}\ t<\tau_m \\
      M^\alpha_{\tau_m}+B^\alpha_{t-\tau_m}, & \text{otherwise}
    \end{cases}
\end{align*}
where $(B^\alpha_t)_{t\geq 0}$ is an independent on/off Brownian motion started from the state of $M_{\tau_m}^\alpha$ (which is also independent of the on/off Brownian motions needed for the other particles). We denote by $\bar L_t, \bar I_t, \bar J_t, \bar K_t$ the quantities corresponding to the modified dual process $(\bar M_t)_{t\geq 0}$. We may then repeat the proof of Theorem \ref{thm:duality}. Moreover, the preceding calculations from this Lemma then show that 
\begin{align}
&\frac{1}{2}\nu\E \left[\int_0^t \int_0^{t-r}\bar K(r)  \sum_{\beta, \delta \in \bar I_r, \beta \neq \delta} \prod_{\phi \in \bar I_r \setminus \lbrace\beta, \delta \rbrace} u_\eps (s,\bar M^\phi_r) \prod_{\gamma \in \bar J_r} v_\eps (s,\bar M^\gamma_r)\right.\nonumber\\
&\qquad\qquad\qquad\qquad\qquad\qquad\qquad\times\left. \int_\R \rho_\eps(z-\bar M^\beta_r) \rho_\eps(z-\bar M^\delta_r)\sigma^2(u(s,z)) \, \ddd z \, \ddd s \, \ddd r\right] \\
&\quad \quad  \to \frac{1}{4}\nu\E\left[ \int_0^t \int_0^{t-r}  \bar K(s) \sum_{\beta, \delta \in \bar I_s, \beta \neq \delta}  \prod_{\phi \in \bar I_s\setminus \lbrace \beta, \delta \rbrace}u (r,\bar M^\phi_s) \prod_{\gamma \in \bar J_s}v (r,\bar M^\gamma_s) \right. \nonumber\\
&\qquad\qquad\qquad\qquad\qquad\qquad\times \left.  (u (r,\bar M^\delta_s)-u (r,\bar M^\beta_s)u (r,\bar M^\delta_s))  \, \ddd \bar L^{\beta,\delta}_s \, \ddd r \right] \label{eq:modifieddual}
\end{align}
as $\eps \to 0$. Note however that since the modified dual process neither branches, dies nor coalesces after time $\tau_m$ we must add the indicator $\ind_{\lbrace s \leq \tau_m\rbrace }$ to the quantities \eqref{eq:death}, \eqref{problem0} and \eqref{problem2} so that the duality relation for the modified process becomes 
\begin{align}
    \bar k(t,0,0)-\bar k(0,t,0)&=\frac{1}{4}\nu\E\left[ \int_0^t \ind_{\lbrace s \geq \tau_m\rbrace } \bar K(s) \sum_{\beta, \delta \in \bar I_s, \beta \neq \delta}  \prod_{\phi \in \bar I_s\setminus \lbrace \beta, \delta \rbrace}u (r,\bar M^\phi_s) \prod_{\gamma \in \bar J_s}v (r,\bar M^\gamma_s)\right.\nonumber\\
&\qquad\qquad\qquad\qquad\qquad\times\left.  (u (r,\bar M^\delta_s)-u (r,\bar M^\beta_s)u (r,\bar M^\delta_s))  \, \ddd \bar L^{\beta,\delta}_s \, \right]\nonumber\\
&\qquad+s\E\left[ \int_0^t  \ind_{\lbrace s \geq \tau_m\rbrace } \bar K(s) \sum_{\beta \in \bar I_s}  \prod_{\delta \in \bar I_s, \delta \neq \beta}u (r,\bar M^\delta_s)\right.\nonumber\\
&\qquad\qquad\qquad\qquad\qquad\times \left.\prod_{\gamma \in \bar J_s}v (r,\bar M^\gamma_s) (u ^2(r,\bar M^\beta_s)-u (r,\bar M^\beta_s))  \, \ddd s  \right]\nonumber\\
&\qquad +m_2\E\left[ \int_0^t\ind_{\lbrace s \geq \tau_m\rbrace } \bar K(s) \sum_{\beta \in \bar I_s}  \prod_{\delta \in \bar I_s, \delta \neq \beta}u (r,\bar M^\delta_s)\right.\nonumber\\
&\qquad\qquad\qquad\qquad\qquad\times \left.\prod_{\gamma \in \bar J_s}v (r,\bar M^\gamma_s) (1-u (r,\bar M^\beta_s))  \, \ddd s  \right]. \label{eq:modifieddual2}
\end{align}
The second term may be bounded for some $C>0$ by
\begin{align*}
    C \E\left[\int_0^t \vert \bar I_s \vert \ind_{\lbrace s \geq \tau_m\rbrace } \, \ddd s \right]
\end{align*}
which vanishes since $\ind_{\lbrace s \geq \tau_m\rbrace } \to 0$ as $m \to \infty$ almost surely and $\vert \bar I_s \vert$ can be dominated by a Yule process with rate $s$ uniformly in $m$. A similar calculation yields the same conclusion for the third term. For the first term we have the bound 
\begin{align*}
    C \E\left[\int_0^t \ind_{\lbrace s \geq \tau_m\rbrace } \sum_{\beta, \delta \in \bar I_s, \beta \neq \delta} \, \ddd L^{\beta, \delta}_s  \right] &\leq C\E\left[ \sum_{\beta, \delta \in \bar I_t, \beta \neq \delta} (L^{\beta, \delta}_t -L^{\beta, \delta}_{\tau_m})^+\right].
\end{align*}
Now, since there is no more branching, coalescence and death after time $\tau_m$ the expected intersection local time per pair of particles after time $\tau_m$ is dominated by the expected local time of a single \textit{standard} Brownian motion at 0 up until time $\sqrt{2}t$ which we denote by $L$. Hence, denoting by $(\mathcal{G}_t)_{t \geq 0}$ the natural filtration of the dual process we even have that the first term is bounded by
\begin{align*}
     C\E\left[ \sum_{\beta, \delta \in \bar I_t, \beta \neq \delta} \ind_{\lbrace \tau_m \leq t \rbrace} \E[L^{\beta, \delta}_t-L^{\beta, \delta}_{\tau_m}\vert \mathcal{G}_{\tau_m}]\right] \leq C\E\left[\vert \bar I_t \vert^2 \ind_{\lbrace \tau_m \leq t \rbrace} \E[L_{\sqrt{2}t}]\right].
\end{align*}
This quantity also vanishes as $m \to \infty$. Thus, we obtain that the right hand side of Equation \eqref{eq:modifieddual2} converges to $0$ as $m \to \infty$. For the left hand side note that the modified dual converges to the original dual process as $m \to \infty$ and invoke the dominated convergence theorem. 
\end{proof}

\noindent As a corollary, the moment duality of Theorem \ref{thm:duality}
allows us to infer uniqueness in law for the SPDE \eqref{eq:SteppingStoneWithSeedbank}. 
Note that the underlying arguments are standard but 
some care is needed due to the fact that we allow for non-continuous initial conditions.

\begin{proof} [Proof of Theorem \ref{thm:Uniqueness in law}]

Recall that if $(u,v)$ is a solution of the system $\eqref{eq:SteppingStoneWithSeedbank}$ with initial conditions $v_0,u_0 \in B(\R,[0,1])$, then 
\begin{align*}
    \tilde u(t,x)& :=  u(t,x) -\hat u_0(t,x) := u(t,x) - \int_\R G(t,x,y) u_0(y)\, \ddd y,\\
    \tilde v(t,x)& := v(t,x) - \hat v_0(t,x) := v(t,x) - e^{-c't}v_0(x) 
\end{align*}
have paths taking values in $C([0,\infty[,C(\R))$, see Remark \ref{rem_continuity} and \eqref{variation of constants}. 
Now, by Theorem \ref{thm:duality} the law of the dual process uniquely determines the mixed moments of
\begin{align*}
    \left((\tilde u(t,x_1),\tilde v(t,y_1)),\ldots, (\tilde u(t,x_n),\tilde v(t,y_n))\right)
\end{align*}
and hence, by uniqueness in the Hausdorff moment problem, 
also the joint distribution of the above quantity for each fixed $t\ge0$ and  arbitrarily chosen $x_1,y_1,\ldots ,x_n,y_n \in \R$, $n\in\N$.
Since the cylindrical $\sigma$-algebra on $C(\R)$ coincides with the Borel-$\sigma$-algebra w.r.t.\ the topology of locally uniform convergence, this shows that the distribution of $(\tilde u(t,\cdot),\tilde v(t,\cdot))$ on $C(\R)^2$ is uniquely determined for each fixed $t\ge0$. In order to extend this to the finite-dimensional distributions, one can use the martingale problem corresponding to the weak formulation of the equation. Using the well-known fact that uniqueness of the one-dimensional time marginals in a martingale problem implies uniqueness of the finite-dimensional time marginals, we then obtain that the distribution of $(\tilde u,\tilde v)$ on $C([0,\infty[,C(\R))^2$ is uniquely determined. 
Finally, this implies that also the distribution of $( u, v)=(\tilde u+\hat u_0,\tilde v+\hat v_0)$ on $C\left(]0,\infty[,C(\R,[0,1])\right)\times C\left([0,\infty[,B(\R,[0,1])\right)$ is uniquely determined.

\end{proof}



\section{An application to the F-KPP Equation with seed bank}
\label{sec:F-KPP}

\noindent We are interested in applying the previously established results to the F-KPP Equation with seed bank, i.e.\ the system:
\begin{align} \label{eq:Fkpp_seedbank}
    \partial_t  p(t,x) &= \frac{1}{2} \Delta p(t,x) + (1-p(t,x))p(t,x) + c( q(t,x)-  p(t,x)),\nonumber\\
    \partial_t q(t,x) &=  c'(p(t,x)- q(t,x))
\end{align}
with $p_0=q_0 =\ind_{] -\infty,0]}$.
This means that in our original equation we set $m_1=m_2=\nu=0,s=1$ and return to the setting $p =1-u$. We do this to make the results of this section more easily comparable with the literature concerning the original F-KPP Equation and avoid confusion (see e.g. \cite{F37,McK75}). This also implies that the dual process is now ``merely" an on/off branching Brownian motion.
Note that here the selection term is always positive for $p$ implying that it corresponds to the beneficial type. \\ Recall for the case of the classical F-KPP Equation that since we start off with a Heaviside initial condition concentrated on the negative half axis, the wave speed $\sqrt 2$ also becomes the asymptotic speed at which the beneficial allele ``invades" the positive half axis. \\
\textit{In this section we consider the question to what extent the introduction of the seed bank influences the invasion speed of the beneficial allele.}\\
We begin by proposing a formal definition of the invasion speed.
\begin{defn}
In the setting of Equation \eqref{eq:Fkpp_seedbank}, with Heaviside initial conditions, we call $\xi \geq 0$ the asymptotic invasion speed of the beneficial allele if 
\begin{align*}
    \xi = \inf \big\lbrace  \lambda \geq 0 \big\vert \lim_{t \to \infty} p (t,\lambda t) =0\big\rbrace.
\end{align*}
\end{defn}

\noindent In the case of the classical F-KPP Equation we have $\xi=\sqrt{2}$. Intuitively, one would expect this speed to be reduced in the presence of a seed bank.

\noindent In order to investigate this we aim to employ the duality technique established in the preceding section. Recall that in the setting of Corollary \ref{corol:dualityF-KPP} the duality is given by $$p(t,x)=1-\PP_{(0,\boldsymbol{a})}\left(\max_{\beta \in I_t \cup J_t} M_t^\beta \leq x \right)$$
where $(M_t)_{t\geq 0}$ is an on/off branching Brownian motion, $I_t,J_t$ are the corresponding index sets of active and dormant particles at time $t\geq 0$, respectively, and $\PP_{(x,\boldsymbol{a})}$ is the probability measure under which the on/off BBM is started from a single \textit{active} particle at $x\in\R$. This clearly resembles the duality from the classical FKPP Equation given by 
$$p(t,x)=1-\PP_0\big(\max_{\beta \in I_t} \tilde B_t^\beta \leq x \big)$$
where $( \tilde B_t)_{t\geq 0}$ is a simple branching Brownian motion.

\begin{proof} [Proof of Corollary \ref{corol:dualityF-KPP}]

By Theorem \ref{thm:duality}  for initial values $(x_1,\ldots, x_n) \in \R^n$ and $(y_1, \ldots , y_m) \in \R^m$ we have that
\begin{align*}
    \E\left[ \prod_{i=1}^n (1- p(t,x_i)) \prod_{i=1}^m (1-q(t,y_i)) \right] = \E\left[ \prod_{\beta \in I_t} (1- p_0(B_t^\beta)) \prod_{\gamma \in J_t} (1-q_0(B_t^\gamma)) \right].
\end{align*}

\noindent Plugging in our specific initial conditions and using that the solution $(p , q)$ is deterministic we see
\begin{align*}
    1-p(t,x) &=\PP_{(x,\boldsymbol{a})}\left(\min_{\beta \in I_t \cup J_t} M_t^\beta \geq 0 \right)\\
     &=\PP_{(0,\boldsymbol{a})}\left(\min_{\beta \in I_t\cup J_t} M_t^\beta +x\geq 0 \right)\\
     &=\PP_{(0,\boldsymbol{a})}\left(\max_{\beta \in I_t \cup J_t} -M_t^\beta \leq x \right)=\PP_{(0,\boldsymbol{a})}\left(\max_{\beta \in I_t \cup J_t} M_t^\beta \leq x \right).
\end{align*}

\end{proof}
\noindent Next, we establish an upper bound for the asymptotic speed of the rightmost particle. 
Writing $K_t\defeq I_t \cup J_t$ for $t \geq 0$ and denoting by $B=(B_t)_{t \geq 0}$ an on/off BM without branching,
we have
\begin{align*}
    \PP( \exists \beta \in K_t\colon \, M^\beta_t >\lambda t  ) &\leq \E \left[ \sum_{\beta \in K_t} \ind_{\lbrace M^\beta_t>\lambda t \rbrace} \right]\\
    &= \E\left[\,\abs{K_t}\right] \PP(B_t>\lambda t )
\end{align*}
where we have used the following simple many-to-one lemma:
\begin{lemma}
For any $t\geq 0$ and any measurable function $F\colon \R \to \R$ we have 
\begin{align*}
    \E_{(x,\sigma)} \left[ \sum_{\beta \in K_t} F(M^\beta_t)\right] = \E_{(x,\sigma)}\left[\,\abs{K_t}\right] \E_{(x,\sigma)}\left[F(B_t) \right]
\end{align*}
where $B$ is a on/off Brownian motion under $\PP_{(x,\sigma)}$ starting in a single particle with initial state $\sigma \in \lbrace\boldsymbol a, \boldsymbol d \rbrace$ and initial position $ x \in \R$.
\end{lemma}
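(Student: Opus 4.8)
The plan is to prove this by a standard inductive/martingale argument on the branching structure, decomposing the on/off BBM into its genealogical tree. First I would set up notation: under $\PP_{(x,\sigma)}$, the on/off BBM starts from a single particle at position $x$ with marker $\sigma$. Each particle carries an independent exponential switching clock (rate $c$ if active, $c'$ if dormant), active particles additionally carry an independent branching clock of rate $s$, and active particles move as Brownian motions while dormant ones are frozen. Note that since we are in the F-KPP case $m_1=m_2=\nu=0$, there is no killing and no coalescence, so the particle system is genuinely a branching process with independent motion/switching in each line of descent, and $K_t = I_t \cup J_t$ is simply the set of all particles alive at time $t$.

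The key step is to observe that conditionally on the \emph{tree structure and switching history} — i.e.\ on the branching times, the genealogy, and the on/off schedule of every line — the spatial displacements along distinct root-to-particle paths are determined by independent Brownian increments accumulated only during ``on'' periods. Hence for a fixed particle $\beta \in K_t$, conditionally on this history, $M^\beta_t$ has the same law as $B_t$ conditioned on the switching schedule of that particular line. The crucial simplification is that the switching mechanism is identical for every line (it does not depend on position or on branching), so when we \emph{average over} the switching schedule, each $M^\beta_t$ is marginally distributed as $B_t$ under $\PP_{(x,\sigma)}$. Combining these:
\begin{align*}
    \E_{(x,\sigma)}\left[\sum_{\beta\in K_t} F(M^\beta_t)\right]
    &= \E_{(x,\sigma)}\left[\,\E_{(x,\sigma)}\!\left[\sum_{\beta\in K_t} F(M^\beta_t)\,\Big|\,\mathcal{T}_t\right]\right]\\
    &= \E_{(x,\sigma)}\left[\sum_{\beta\in K_t}\E_{(x,\sigma)}\!\left[F(M^\beta_t)\,\big|\,\mathcal{T}_t\right]\right],
\end{align*}
where $\mathcal{T}_t$ denotes the sigma-algebra generated by the genealogy and the switching schedules up to time $t$. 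Then one uses that, conditionally on $\mathcal{T}_t$, each inner expectation equals $\E_{(x,\sigma)}[F(B_t)\,|\,\mathcal{S}^\beta]$ where $\mathcal{S}^\beta$ is the switching schedule of line $\beta$; the point is that each $\mathcal S^\beta$ has, marginally, the law of the switching schedule of the single line $B$, so summing and taking expectations factorizes into $\E_{(x,\sigma)}[|K_t|]\cdot\E_{(x,\sigma)}[F(B_t)]$.

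To make the factorization rigorous I would argue by induction on the number of branch events up to time $t$ (or equivalently by a first-branching decomposition / Kolmogorov-type forward argument): let $g(t):=\E_{(x,\sigma)}[|K_t|]$ and condition on the first branching time. On the event of no branching before $t$, there is exactly one particle and its position is $B_t$ by construction. On the event that the first branch happens at time $r<t$ producing two active particles at a common position $B_r$, the two subtrees evolve independently as fresh on/off BBMs started from $(B_r,\boldsymbol a)$, and one applies the inductive hypothesis to each, using the strong Markov property and the fact that $B$ itself satisfies the analogous first-branch decomposition with branch events replaced by ``nothing'' (so its schedule is just run to time $t$). The dormant case (first event is a switch rather than a branch) is handled identically, noting a dormant particle contributes a frozen displacement exactly as $B$ would over a dormant stretch. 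The main obstacle is purely bookkeeping: one must check that the expected population size $g(t)=\E_{(x,\sigma)}[|K_t|]$ is finite (it is dominated by the expectation of a Yule process of rate $s$, hence $g(t)\le e^{st}$), so that all interchanges of sum and expectation above, and the inductive recursion, are justified by dominated convergence / Fubini; with $F$ merely measurable one should first prove the identity for bounded nonnegative $F$ and then extend by monotone convergence and linearity.
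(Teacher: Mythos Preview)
Your factorization step does not work, and in fact the identity as stated is false. The obstruction is that the branching rate depends on the on/off state: only active particles branch. Hence $|K_t|$ and the switching schedules along the surviving lineages are \emph{not} independent --- lineages that have spent more time active have had more chances to branch and are therefore overrepresented among the particles in $K_t$. Your claim that ``each $\mathcal S^\beta$ has, marginally, the law of the switching schedule of the single line $B$'' is exactly where this bites: summed over the random set $K_t$, the schedules are size-biased toward larger active time. The first-branching decomposition runs into the same problem in a different guise: for the on/off BBM started active, the first event arrives at rate $s+c$, whereas for the single particle $B$ it arrives at rate $c$; the two renewal equations do not match, and ``replacing branch events by nothing'' changes the waiting-time distribution, so the induction does not close.

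Concretely, the correct many-to-one for a branching Markov process with state-dependent branching rate gives $\E_{(x,\sigma)}\bigl[\sum_{\beta\in K_t}F(M_t^\beta)\bigr]=\E_{(x,\sigma)}\bigl[e^{sA_t}F(B_t)\bigr]$, where $A_t$ is the total active time of the single on/off BM $B$ up to $t$. The lemma would thus require $e^{sA_t}$ and $F(B_t)$ to be uncorrelated for every $F$, i.e.\ $A_t$ independent of $B_t$ --- but $B_t-x=\tilde B_{A_t}$ is conditionally Gaussian with variance $A_t$, so this fails whenever $c,c'>0$. (Taking $F(y)=(y-x)^2$ reduces the claim to $\E[e^{sA_t}A_t]=\E[e^{sA_t}]\,\E[A_t]$, impossible for nondegenerate $A_t\ge0$.) The paper's own proof makes the same slip: independence of $|K_t|$ from the driving Brownian increments is genuine, but the positions $M_t^\beta$ also depend on the switching schedules, which \emph{are} correlated with $|K_t|$. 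Fortunately the downstream wave-speed bound survives unchanged, since the paper only needs $\E\bigl[\sum_\beta\ind\{M_t^\beta>\lambda t\}\bigr]\le\E[|K_t|]\cdot\sup_{a\le t}\PP(\tilde B_a>\lambda t)$, and this inequality follows directly from the correct many-to-one together with the uniform Gaussian tail bound already employed there.
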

\begin{proof}
Using that the number of particles is independent of the movement of the active particles we get
\begin{align*}
    \E_{(x,\sigma)} \left[ \sum_{\beta \in K_t} F(M^\beta_t)\right] &= \sum_{n=1}^\infty \E_{(x,\sigma)} \left[ \sum_{\beta =1}^n F(M^\beta_t) \ind_{\lbrace \abs{K_t}=n \rbrace}\right] \\
    &=\sum_{n=1}^\infty \E_{(x,\sigma)} \left[ \sum_{\beta =1}^n F(M_t^\beta ) \right] \PP_{(x,\sigma)}(\, \abs{K_t}=n)\\
    &=\sum_{n=1}^\infty n\E_{(x,\sigma)}[F(B_t)]\PP_{(x,\sigma)}(\, \abs{K_t}=n)=\E_{(x,\sigma)}[\, \abs{K_t}] \E_{(x,\sigma)}[F(B_t)].
\end{align*}
This proves the result.
\end{proof}
\noindent Now, we first compute $\E_{(0, \boldsymbol{a})}\left[\,\abs{K_t}\right]$. Note that $(\abs{I_t}, \abs{J_t})_{t \geq 0}$ is a continuous time discrete state space Markov chain on $\N_0 \times \N_0$ with the following transition rates:
\begin{align*}
\text{birth}&\colon
i\to i+1, \text{ with rate } i\\
\text{active to dormant}&\colon
\begin{cases}
i\to i-1, \\
j\to j+1
\end{cases} \text{ with rate } ci\\
\text{dormant to active}&\colon
\begin{cases}
i\to i+1, \\
j\to j-1.
\end{cases}\text{ with rate } c'j
\end{align*}
For the expectations we then get the following system of ODE's for $x=x(t)=\E_{(0, \boldsymbol{a})}\left[\,\abs{I_t} \right]$ and $y=y(t)=\E_{(0, \boldsymbol{a})}\left[\,\abs{J_t} \right]$ (cf. \cite[V.7]{AN72} )
\begin{align*}
    x'&=x-cx+c'y ,\\
    y'&=cx-c'y.
\end{align*}
With the initial condition $(x(0),y(0))=(1,0)$ we obtain the following closed form solution:
\begin{align*}
    x(t)&= \frac{1}{\sqrt{a}} \left(\frac{c'-c+\sqrt{a}+1}{2}\right) \exp\left( \left(\frac{-c-c'+1+\sqrt{a}}{2 }\right)t\right) \\
    &\quad - \frac{1}{\sqrt{a}} \left(\frac{c'-c-\sqrt{a}+1}{2}\right) \exp\left( -\left(\frac{c+c'-1+\sqrt{a}}{2 }\right)t\right),\\
    y(t) &= \frac{1}{\sqrt{a}} c \exp\left( \left(\frac{-c-c'+1+\sqrt{a}}{2 }\right)t\right)- \frac{1}{\sqrt{a}} c \exp\left( -\left(\frac{c+c'-1+\sqrt{a}}{2 }\right)t\right)
\end{align*}
abbreviating $a:=(c-1)^2 +2cc' + (c')^2 +2c'$.\\
\noindent Finally, we aim to control $\PP_{(0,\boldsymbol{a})}(B_t >\lambda t)$. To this end we recall the well known tail bound for the normal distribution given by 
\begin{align}\label{eq:tailbound}
    \frac{1}{\sqrt{2\pi}} \int_x^\infty e^{-y^2/2} \, \ddd y \leq \frac{e^{-x^2/2}}{x \sqrt{2 \pi}}
\end{align}
for $x \geq 0$.


\noindent To employ this, note first that $\PP_{(0,\boldsymbol{a})}(B_t> \lambda t )$ is equivalent to
\begin{align*}
    \PP_{(0,\boldsymbol{a})}(\tilde B_{t-X_t}> \lambda t )
\end{align*}
where $X_t$ is the amount of time the on/off Brownian path $(B_t)_{t \geq 0}$ is switched off until time $t\geq 0$ and $\tilde B =(\tilde B_t)_{t \geq 0}$ is a \textit{standard} Brownian motion started at $0$. By independence we then have 
\begin{align*}
    \PP_{(0,\boldsymbol{a})}( \exists \beta \in K_t\colon \, M^\beta_t >\lambda t  ) 
    &\leq \E_{(0,\boldsymbol{a})}\left[\,\abs{K_t}\right] \PP_{(0,\boldsymbol{a})}(\tilde B_{t-X_t}>\lambda t )\\
    &= \E_{(0,\boldsymbol{a})}\left[\,\abs{K_t}\right] \E_{(0,\boldsymbol{a})}\left[\E\left[ \ind_{\lbrace \tilde  B_{t-X_t}>\lambda t \rbrace}\big\vert X_t \right]\right]\\
    &= \E_{(0,\boldsymbol{a})}\left[\,\abs{K_t}\right] \E_{(0,\boldsymbol{a})}\left[\PP ( \tilde B_{t-s}>\lambda t )\big\vert_{s=X_t} \right].
\end{align*}
Then, using Equation \eqref{eq:tailbound} we get for $s<t$
\begin{align*}
    \PP (\tilde  B_{t-s}>\lambda t ) &= \PP(\sqrt{t-s} \tilde B_1 > \lambda t)=\PP(\tilde B_1> \lambda t /\sqrt{t-s} ) \\
        &\leq \frac{1}{\sqrt{2 \pi} \frac{\lambda t}{\sqrt{t-s}}} e^{-\frac{\lambda ^2 t ^2}{2(t-s)}} =\frac{\sqrt{t-s}}{\sqrt{2 \pi}\lambda t} e^{-\frac{\lambda ^2 t ^2}{2(t-s)}} \\
        &\leq \frac{1}{\sqrt{2 \pi} \lambda \sqrt{t}} e^{-\frac{\lambda ^2 t}{2}} .
\end{align*}
Thus, we may complete our calculation in the following manner:
\begin{align*}
    \PP_{(0,\boldsymbol{a})}( \exists \beta \in I_t\colon \, M^\beta_t >\lambda t  ) &\leq \left(\frac{1}{\sqrt{a}} \left(\frac{c'-c+\sqrt{a}+1}{2}\right) \exp\left( \left(\frac{-c-c'+1+\sqrt{a}}{2 }\right)t\right) \right.\\
    &\quad\quad - \left.\frac{1}{\sqrt{a}} \left(\frac{c'-c-\sqrt{a}+1}{2}\right) \exp\left( -\left(\frac{c+c'-1+\sqrt{a}}{2 }\right)t\right)\right.\\
    &\quad\quad + \left. \frac{1}{\sqrt{a}} c \exp\left( \left(\frac{-c-c'+1+\sqrt{a}}{2 }\right)t\right)\right.\\
   & \left. \quad\quad - \frac{1}{\sqrt{a}} c \exp\left( -\left(\frac{c+c'-1+\sqrt{a}}{2 }\right)t\right) \right) \frac{e^{-\lambda^2t /2}}{\lambda \sqrt{t}\sqrt{2 \pi}}\\
   &\to 0
\end{align*}
as $t \to \infty$ for $\lambda^2 \geq -c'-c+\sqrt{a}+1$.  
Setting $\lambda^*\defeq\sqrt{-c'-c+\sqrt{a}+1}$, the preceding calculations show that for every $\varepsilon >0$ there exist some constants $C, C(\varepsilon)>0$ depending on $c,c'$ such that for $t$ large enough we have the bound
\begin{align*}
    \PP_{(0,\boldsymbol{a})}\left(R_t/t>(1+\varepsilon)\lambda^*\right) \leq C \exp(-C(\varepsilon)t).
\end{align*}
Then an application of the Borel-Cantelli Lemma shows that we have proved the following proposition:
\begin{prop} \label{prop:NewBoundWaveSpeed}
Denote by $(R_t)_{t\geq 0}$ the process describing the position of the rightmost particle of the on/off branching Brownian motion with switching rates $c,c'\geq 0$ starting from a single \textit{active} particle at $0$. Then we have almost surely that
\begin{align*}
    \limsup_{t \to \infty} \frac{R_t}{t} \leq \lambda 
\end{align*}
for every $\lambda$ such that $\lambda^2 \geq -c'-c+\sqrt{a}+1$, i.e.\ the speed of the rightmost particle is asymptotically bounded by $ \sqrt{-c'-c+\sqrt{a}+1}\, t$, where $a=(c-1)^2 +2cc' +(c')^2 +2c'$.
\end{prop}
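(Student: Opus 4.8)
The plan is to prove Proposition~\ref{prop:NewBoundWaveSpeed} by a first-moment (one-particle) estimate on the number of particles that have escaped beyond $\lambda t$, followed by a Borel--Cantelli argument. First I would invoke the many-to-one lemma stated above with $F=\ind_{]\lambda t,\infty[}$ to get
\[
\PP_{(0,\boldsymbol a)}\big(\exists\,\beta\in K_t\colon M_t^\beta>\lambda t\big)\le\E_{(0,\boldsymbol a)}\big[\,|K_t|\,\big]\,\PP_{(0,\boldsymbol a)}\big(B_t>\lambda t\big),
\]
where $K_t=I_t\cup J_t$ and $B$ is a single on/off Brownian motion with the same switching rates started active at $0$. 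The second factor here is to be analysed below; the first factor $\E_{(0,\boldsymbol a)}[|K_t|]=x(t)+y(t)$ is computed by noting that $(|I_t|,|J_t|)$ is the continuous-time Markov chain with the birth/switching rates displayed above, whose means satisfy the linear system $x'=x-cx+c'y$, $y'=cx-c'y$ with $(x(0),y(0))=(1,0)$. Solving this system yields the explicit closed form given earlier, so that $\E_{(0,\boldsymbol a)}[|K_t|]$ grows (up to constants) like $\exp\!\big(\tfrac12(-c-c'+1+\sqrt a)\,t\big)$ with $a=(c-1)^2+2cc'+(c')^2+2c'$; finiteness is not an issue since $|K_t|$ is dominated by a Yule process.

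Next I would control $\PP_{(0,\boldsymbol a)}(B_t>\lambda t)$ via the time-change representation $B_t\overset{d}{=}\tilde B_{t-X_t}$, where $\tilde B$ is a \emph{standard} Brownian motion and $X_t\in[0,t]$ is the total time the on/off path has been switched off up to time $t$, independent of $\tilde B$. Conditioning on $X_t$ and using the Gaussian tail bound \eqref{eq:tailbound} gives, uniformly over $s=X_t\in[0,t]$,
\[
\PP\big(\tilde B_{t-s}>\lambda t\big)\le\frac{\sqrt{t-s}}{\sqrt{2\pi}\,\lambda t}\,e^{-\lambda^2 t^2/(2(t-s))}\le\frac{1}{\sqrt{2\pi}\,\lambda\sqrt t}\,e^{-\lambda^2 t/2}.
\]
The crucial point is that this bound is uniform in the random quantity $X_t$, so it survives taking expectation over $X_t$.

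Multiplying the two estimates, the polynomial-times-exponential growth $\exp(\tfrac12(-c-c'+1+\sqrt a)t)$ of the expected particle number is dominated by $e^{-\lambda^2 t/2}$ exactly when $\lambda^2>-c-c'+1+\sqrt a=(\lambda^*)^2$. Taking $\lambda=(1+\varepsilon)\lambda^*$ for fixed $\varepsilon>0$ then produces constants $C,C(\varepsilon)>0$, depending on $c,c'$, with $\PP_{(0,\boldsymbol a)}(R_t/t>(1+\varepsilon)\lambda^*)\le C\exp(-C(\varepsilon)t)$ for all large $t$. Evaluating along integer times $t=n$, the sum $\sum_n C e^{-C(\varepsilon)n}$ converges, so Borel--Cantelli gives $\limsup_n R_n/n\le(1+\varepsilon)\lambda^*$ a.s.; letting $\varepsilon\downarrow0$ along a countable sequence yields $\limsup_n R_n/n\le\lambda^*$ a.s., and since $\lambda^*\le\lambda$ for any $\lambda$ with $\lambda^2\ge-c-c'+1+\sqrt a$, this is the asserted bound (along integers).

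The step I expect to require the most care — indeed the main obstacle beyond the routine first-moment and ODE computations — is the passage from integer times to the genuine $\limsup_{t\to\infty}R_t/t$. Here one must bound $\sup_{t\in[n,n+1]}R_t-R_n$: since only finitely many particles are alive by time $n+1$ (domination by a Yule process again gives the needed tail control on $|K_{n+1}|$), and each of them moves as a Brownian motion over $[n,n+1]$, a standard reflection/Doob maximal estimate shows this supremum is $o(n)$ almost surely, so the discrete bound transfers to the continuous one. One should also check that the exceptional events entering Borel--Cantelli are set up to capture $\sup_{t\in[n,n+1]}R_t$ rather than merely $R_n$, which is a minor modification of the first-moment bound (replacing $\{M_t^\beta>\lambda t\}$ by a slightly inflated event over the interval).
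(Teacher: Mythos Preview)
Your proposal is correct and follows essentially the same route as the paper: the many-to-one reduction, the linear ODE for $\E[|I_t|]$ and $\E[|J_t|]$, the time-change $B_t\overset{d}{=}\tilde B_{t-X_t}$ combined with the Gaussian tail bound uniform in $X_t$, and then Borel--Cantelli. The only difference is that the paper simply invokes Borel--Cantelli on the exponential bound $\PP_{(0,\boldsymbol a)}(R_t/t>(1+\varepsilon)\lambda^*)\le C e^{-C(\varepsilon)t}$ without spelling out the passage from a discrete time skeleton to the full $\limsup_{t\to\infty}$, whereas you explicitly flag and handle this step; your extra care there is warranted but not a departure in method.
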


\begin{figure}[h]
    \centering
    \scalebox{0.7}[0.5]{\includegraphics{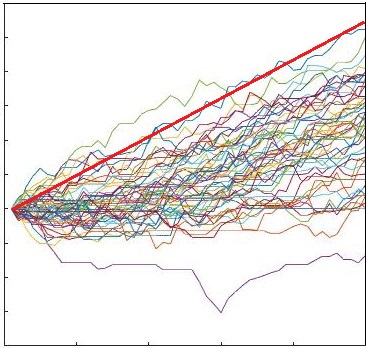}}
    \caption{Independent realizations of the trajectories of the rightmost particle of $(\tilde M_t)_{t \geq 0}$ plotted against the asymptotic speed (red).}
    \label{fig:MaximumparticleSpeed}
\end{figure}

\begin{remark}
Judging from simulations (cf. Figure 2) the upper bound does not seem unreasonable. It is interesting to note that the value of $\lambda$ is entirely specified by the expected number of particles, as we may use the same tail bounds for $\PP(B_t > \lambda t)$ and $\PP(M_t >\lambda t)$. \\
However, establishing a matching lower bound seems to be much more challenging and is currently beyond the scope of this paper. We hope to tackle this question in future work.
\end{remark}
\noindent Finally, we apply our deductions for the dual process to our original PDE. Via the duality relation from Corollary \ref{corol:dualityF-KPP} we get:
\begin{corol} \label{corol: propagationspeed}
In the case of the F-KPP Equation with seed bank we have that
\begin{align*}
    \xi\leq \sqrt{-c'-c+\sqrt{a}+1}.
\end{align*}
\end{corol}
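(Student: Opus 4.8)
The plan is to read off the claim directly from the duality of Corollary~\ref{corol:dualityF-KPP} together with the tail estimate established in the run-up to Proposition~\ref{prop:NewBoundWaveSpeed}. First I would record that, by Corollary~\ref{corol:dualityF-KPP} and since $R_t=\sup_{\alpha\in I_t\cup J_t}M_t^\alpha$, for every $t\ge0$ and $x\in\R$
\begin{align*}
p(t,x)=1-\PP_{(0,\boldsymbol{a})}\!\left(\max_{\beta\in I_t\cup J_t}M_t^\beta\le x\right)=\PP_{(0,\boldsymbol{a})}\!\left(R_t>x\right).
\end{align*}
In particular, for a fixed $\lambda\ge0$ we have $p(t,\lambda t)=\PP_{(0,\boldsymbol{a})}(R_t>\lambda t)=\PP_{(0,\boldsymbol{a})}(\exists\,\beta\in K_t\colon M_t^\beta>\lambda t)$, with $K_t=I_t\cup J_t$.

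Next I would invoke the bound derived just before Proposition~\ref{prop:NewBoundWaveSpeed}: combining the many-to-one lemma, the closed form for $x(t)=\E_{(0,\boldsymbol{a})}[\,\abs{I_t}\,]$ and $y(t)=\E_{(0,\boldsymbol{a})}[\,\abs{J_t}\,]$, and the Gaussian tail estimate \eqref{eq:tailbound} applied after conditioning on the dormant time $X_t$, one obtains for $\lambda>0$ that
\begin{align*}
\PP_{(0,\boldsymbol{a})}\!\left(\exists\,\beta\in K_t\colon M_t^\beta>\lambda t\right)\le\bigl(x(t)+y(t)\bigr)\,\frac{e^{-\lambda^2 t/2}}{\lambda\sqrt{2\pi t}}.
\end{align*}
Since $x(t)+y(t)$ has exponential growth rate $\tfrac12(-c-c'+1+\sqrt a)=\tfrac12(\lambda^*)^2$ with $\lambda^*:=\sqrt{-c'-c+\sqrt a+1}$ and $a=(c-1)^2+2cc'+(c')^2+2c'$, the right-hand side tends to $0$ as $t\to\infty$ for every $\lambda$ with $\lambda^2\ge -c'-c+\sqrt a+1$; this is exactly what Proposition~\ref{prop:NewBoundWaveSpeed} and its proof establish. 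Hence $\lim_{t\to\infty}p(t,\lambda t)=0$ for all such $\lambda$.

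Finally, the definition of the asymptotic invasion speed yields
\begin{align*}
\xi=\inf\bigl\{\lambda\ge0\colon\lim_{t\to\infty}p(t,\lambda t)=0\bigr\}\le\lambda^*=\sqrt{-c'-c+\sqrt a+1},
\end{align*}
which is the assertion. I do not expect a genuine obstacle here: the only two points needing a moment of care are checking that Corollary~\ref{corol:dualityF-KPP} is applied in exactly the setting fixed in this section ($p=1-u$, $m_1=m_2=\nu=0$, $s=1$, Heaviside data $p_0=q_0=\ind_{]-\infty,0]}$, so that the dual is the on/off BBM started from a single active particle), and identifying the event $\{R_t>\lambda t\}$ with $\{\exists\,\beta\in K_t\colon M_t^\beta>\lambda t\}$ so that the particle-system tail bound transfers verbatim to $p(t,\lambda t)$. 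Everything else is a direct quotation of the computations preceding Proposition~\ref{prop:NewBoundWaveSpeed}.
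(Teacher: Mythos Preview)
Your proposal is correct and follows exactly the paper's approach: the paper's proof consists of the single line $p(t,\lambda t)=\PP_{(0,\boldsymbol{a})}(R_t>\lambda t)=\PP_{(0,\boldsymbol{a})}(\exists\,\beta\in K_t\colon M_t^\beta>\lambda t)$, leaving the reader to combine this with the tail estimate already established before Proposition~\ref{prop:NewBoundWaveSpeed} and the definition of $\xi$. You have simply written out these implicit steps explicitly.
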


\begin{proof}
This follows directly from the fact that
 \begin{align*}
     p(t,\lambda t) &= \PP_{(0,\boldsymbol{a})}(R_t > \lambda t) = \PP_{(0,\boldsymbol{a})}( \exists \beta \in K_t\colon \, M^\beta_t >\lambda t  ).
 \end{align*}
\end{proof}
\noindent This shows that indeed the invasion speed for $c=c'=1$ is slowed down from $\sqrt{2}$ to (at least) $\sqrt{\sqrt{5}-1} \approx 1.111$.

\begin{remark}
Note that for the boundary case $c=c'\to 0$ we recover the upper bound $\sqrt{2}$ from the classical F-KPP Equation. For $c,c' \to \infty$ the upper bound becomes $\sqrt{1}$ showing that instantaneous mixing with the dormant population leads to a slow down from the classical F-KPP Equation corresponding to a doubled effective population size. On the level of the dual process this could be interpreted as  essentially halving diffusivity and branching rate.
\end{remark}
\section{Proofs for Section 2}\label{sec:ProofForSection2}

\begin{proof} [Proof of Proposition \ref{thm:SIE_Representation}]
The proof is standard and follows along the lines of \cite[Thm.\ 2.1]{S94} which is why we only provide a rough outline. \\
Suppose that for each fixed $(t,y)\in\,]0,\infty[\times\R$, Equation \eqref{SIE} is satisfied almost surely. Then using the linear growth bound and $L^2$-boundedness, a simple Fubini argument shows that almost surely Equation \eqref{SIE} is satisfied for almost all $(t,y)\in\,]0,\infty[\times\R$. Thus given $\phi\in C_c^\infty(\R)$, we can plug \eqref{SIE} (with $s$ in place of $t$) into $\int_0^t\langle u(s,\cdot),\Delta\phi\rangle\,\ddd s$. Then a straightforward but tedious calculation
using Fubini's theorem for Walsh's stochastic integral (cf.\ Theorem 5.30 in \cite{K09}) 
shows that Equation \eqref{spdeschwartz} holds almost surely for each fixed $t\ge0$. Since both sides are continuous in $t\ge0$, \eqref{spdeschwartz} holds for all $t\ge0$, almost surely.
Thus $(u,v)$ is also a solution to \eqref{eq: SPDE_general_shortnotation} in the sense of Definition \ref{def: weaksolution}.\\
Conversely, suppose that for each $\phi\in C_c^\infty(\R)$, Equation \eqref{spdeschwartz} holds for all $t\ge0$, almost surely.\\
\textit{Step 1:} Defining 
\begin{align*}
    C_{\text{rap}}(\R) :=\left\{ f \in C(\R) \,\middle\vert\, \|f\|_\lambda\defeq\sup_{x \in \R} e^{\lambda \abs{x}}\abs{f(x)}< \infty \text{ for all } \lambda >0 \right\}
\end{align*}
and
\begin{align*}
    C^2_{\text{rap}}(\R) :=\left\lbrace f \in  C_{\text{rap}}(\R)\, \big\vert\, f', f''\in C_{\text{rap}}(\R) \right\rbrace,
\end{align*}
one can show that Equation \eqref{spdeschwartz} also holds for each $\psi \in C^2_{\text{rap}}(\R)$.

\noindent\textit{Step 2:} For $T>0$, define $C^{(1,2)}_{T,\text{rap}}$ as the space of all functions $f:[0,T[\times\R\to \R$ such that $ t\mapsto f(t,\cdot)$ is a continuous $C^2_{\text{rap}}$-valued function and $t\mapsto\partial_tf(t,\cdot)$ is a continuous $C_{\text{rap}}$-valued function of $t\in[0,T[$.
Then one can show that for 
each $\phi\in C^{(1,2)}_{T,\text{rap}}$,
we have that an integration-by-parts like equation holds, i.e.\ for all fixed $ t\in[0,T[$ we have almost surely
\begin{align}\label{integration-by-parts}
    &\langle u(t, \cdot ), \phi(t, \cdot ) \rangle - \langle u(0, \cdot ), \phi(0, \cdot ) \rangle \\
    &\qquad\qquad =\int_0^t \langle u(s, \cdot ),  \partial_s \phi(s, \cdot )\rangle  +\langle u(s, \cdot ),\frac{\Delta}{2}  \phi(s, \cdot ) \rangle  +\langle b(s,\cdot,u(s, \cdot ),v(s,\cdot)), \phi(s, \cdot ) \rangle \, \ddd s\nonumber\\
    &\qquad\qquad \qquad +\int_0^t \int_\R \sigma(s,x,u(s,x),v(s,x)) \phi(s,x) \,  W(\ddd x,\ddd s).\nonumber
\end{align}

\noindent\textit{Step 3:} Fix $T>0$, $y\in\R$ and define the time-reversed heat kernel $\phi_{T}^y(t,x):=G(T-t,x,y)$ for $ t\in[0,T[$ and $x\in\R$.
Then we have $\phi_{T}^y\in C^{(1,2)}_{T,\text{rap}}$,
and plugging it into \eqref{integration-by-parts} 
in place of $\phi$ and using that the heat kernel solves the heat equation, we obtain the required result upon taking the limit as $t\uparrow T$. 
\end{proof}

\begin{proof}[Proof of Theorem \ref{Lipschitzexistence}]
The proof follows along the lines of \cite[Thm.\ 2.2]{S94} and only has to be adapted to the two component case.\\

\noindent\textit{Step 1:} Fix $p\ge2$. Define $u_1(t,y)\defeq\langle u_0, G(t,\cdot , y) \rangle$ and $v_1(t,y)\defeq v_0(y)$ for all $(t,y) \in [0,\infty[\,\times \R$ and inductively
\begin{align*}
    u_{n+1}(t,y) & \defeq \langle u_0, G(t,\cdot , y) \rangle +  \int_0^t \langle  b(s,\cdot, u_n(s,\cdot ), v_n(s,\cdot)), G(t-s, \cdot , y ) \rangle \, \ddd s \\
    & \quad  + \int_\R \int_0^t \sigma(s,x, u_n(s,x), v_n(s,x)) G(t-s,x,y) \, W(\ddd s, \ddd x),\\
    v_{n+1}(t,y) & \defeq v_0(y) + \int_0^t \tilde b(s,y,u_n(s,y),v_n(s,y)) \, \ddd s
\end{align*}
for all $n\in\N$.
Then one can use the linear growth bound \eqref{eq: lineargrowth} to obtain that 
this defines a sequence such that for every $n \in \N$ and $T>0$
\begin{align*}
    \left\|(u_n,v_n)\right\|_{T,p} =  \sup_{0\leq t \leq T}\sup_{x \in \R} \E \left[ \left(\abs{u_n(t,x)}+\abs{v_n(t,x)}\right)^p \right]^{1/p}< \infty.
\end{align*}
Similarly, using the Lipschitz condition \eqref{eq:Lipschitzcondition} instead of the linear growth bound one can obtain that for all $n \in \N$ and $t\in[0,T]$
\begin{equation*}
    \left\|(u_{n+1}, v_{n+1})-(u_{n}, v_{n})\right\|_{t,p} ^{ p} \leq C_{T}   \int_0^t (t-s)^{-\frac{1}{2}}\left\|(u_{n}, v_{n})-(u_{n-1}, v_{n-1})\right\|_{s,p}^{ p} \, \ddd s .
\end{equation*}
Applying H\"olders inequality for some $ q>1$ with conjugate $\frac{q}{q-1}\in\, ]1,2[$, we get 
\begin{equation*}
    \left\|(u_{n+1}, v_{n+1})-(u_{n}, v_{n})\right\|_{t,p} ^{ pq} \leq C_{T,p,q}   \int_0^t \left\|(u_{n}, v_{n})-(u_{n-1}, v_{n-1})\right\|_{s,p}^{ pq} \, \ddd s 
\end{equation*}
for 
 all $n \in \N$, $t\in[0,T]$, $T>0$.
By a version 
of Gronwall's lemma (see e.g.\ \cite[Lemma 6.5]{K09}), this implies
\begin{equation*}
    \sum_{n=0}^\infty \left\|(u_{n+1}, v_{n+1})-(u_{n}, v_{n})\right\|_{t,p}  < \infty.
\end{equation*}
Thus $(u_{n}(t,y), v_{n}(t,y))_{n\in\N}$ is a Cauchy sequence in $L^p$ for each fixed $t\geq 0$ and $y \in \R$,
and we can define a predictable random field $(u(t,y),v(t,y))_{t\ge0,y\in\R}$ as the corresponding limit in $L^p$.
Clearly we have
\begin{equation*}
    \|(u_n,v_n)-(u,v)\|_{T,p} \to 0
\end{equation*}
as $n \to \infty$ for all $T>0$, and $(u,v)$ satisfies Equation \eqref{SIE} and \eqref{spde_v} almost surely for each fixed $t\ge0$ and $y\in\R$.
Up to now, $p\ge2$ was fixed, but since convergence in $L^{p'}$ implies convergence in $L^p$ for $p'>p$,  
$(u,v)$ is actually $L^p$-bounded for all $p\ge2$ in the sense of Definition \ref{def: weaksolution}.\\

\noindent\textit{Step 2:} 
We argue that $u$ constructed in \textit{Step 1} has a modification $\tilde u$ with paths in $C(]0,\infty[,C(\R))$, which is equivalent to the random field $\tilde u$ being jointly continuous in $(t,x)\in\ ]0,\infty[\, \times\R$.

Consider first the stochastic integral part $$X(t,y) \defeq \int_\R \int_0^t \sigma(s,x, u(s,x),v(s,x)) G(t-s,x,y) \,  W (\ddd s,\ddd x).$$ Then for each $p\ge1$ 
and $ t,r\in[0, T]$, using the Burkholder-Davis-Gundy and Jensen inequalities as well as the linear growth bound and $L^{2p}$-boundedness of $(u,v)$ from \textit{Step 1}, we obtain that
 \begin{align*}
     \E\left[ \abs{X(t,y)-X(r,z)}^{2p}\right] & = \E \bigg[\bigg|\int_0^{t \vee r} \int_\R (G(t-s,x,y)-G(r-s,x,z)) \\
     &\qquad \times \sigma(s,x,u(s,x),v(s,x)) \, W (\ddd s,\ddd x)\bigg|^{2p}\bigg] \\
     &\leq C(p)\, \E \bigg[\bigg(\int_0^{t \vee r} \int_\R (G(t-s,x,y)-G(r-s,x,z))^{2}\\
     &\qquad \times \abs{\sigma(s,x,u(s,x),v(s,x))}^{2} \, \ddd x \, \ddd s \bigg)^p\bigg] \\
     &\leq C_{p,T} \left( \int_0^{t \vee r} \int_\R (G(t-s,x,y)-G(r-s,x,z))^{2} \, \ddd x \, \ddd s\right)^{p}\\
     &\qquad \times \left(1+\sup_{x \in \R} \sup_{0\leq t \leq T} \E\left[ \left(\vert u(t,x) \vert +\vert v(t,x)\vert \right)^{2p}\right] \right) \\
     & \leq C_{p,T} \left(\,\abs{t-r}^{p/2} +\abs{y-z}^p\right). 
 \end{align*}
Here, for the last inequality we have also used that by the calculation in the proof of \cite[Theorem 6.7]{K09} we have
\begin{equation*}
    \int_0^{t \vee r} \int_\R (G(t-s,x,y)-G(r-s,x,z))^{2} \, \ddd x \, \ddd s \leq C (\,\abs{t-r}^{1/2} +\abs{y-z}).
\end{equation*}
Next, consider the term 
 \begin{align*}
     Y(t,y) \defeq \int_0^t \int_\R G(t-s,x,y) b(s,x,u(s,x),v(s,x)) \, \ddd x \ddd s
 \end{align*}
 for $t \geq 0, y\in \R$. Let $0\le r<t\le T$.
To obtain similar estimates as for $X$ we first split the difference into two terms:
\begin{align*}
    &\E\left[ \vert Y(t,y)-Y(r,z) \vert ^{2p}\right]\\
    &\qquad \leq C(p)\,\E\Bigg[ \left(\int_r^{t} \int_\R \vert G(t-s,x,y)  b(s,x,u(s,x),v(s,x))\vert \, \ddd x \, \ddd s\right)^{2p}\Bigg]\\
    &\qquad \qquad + C(p)\,\E\Bigg[ \left(\int_0^{r} \int_\R \vert G(t-s,x,y)-G(r-s,x,z)\vert \,\vert  b(s,x,u(s,x),v(s,x))\vert \, \ddd x \, \ddd s\right)^{2p}\bigg].
\end{align*}

\noindent By Jensen's inequality, the linear growth bound and $L^{2p}$-boundedness of $(u,v)$, we have for the first term on the right hand side that
\begin{align*}
    &\E\Bigg[\left(\int_r^t \int_\R  G(t-s,x,y) \vert b(s,x,u(s,x),v(s,x))\vert \, \ddd x \ddd s \right)^{2p}\Bigg] \\
    &\qquad \leq C_{p,T}\left(\int_r^t \int_\R  G(t-s,x,y)  \, \ddd x \ddd s \right)^{2p}\left(1+\sup_{x \in \R} \sup_{0 \leq t \leq T} \E\left[ \left(\vert u(t,x) \vert +\vert v(t,x)\vert \right)^{2p}\right]\right)\\
    & \qquad \leq C_{p,T}\, \vert t-r\vert ^{2p}.
\end{align*}
For the second term we proceed analogously, using in addition \cite[Lemma 5.2]{M03} (choose $\beta =\frac{1}{2}$ and $\lambda'=0$ there), to obtain
\begin{align*}
   &\E\bigg[\left(\int_0^r \int_\R  \vert G(t-s,x,y)-G(r-s,x,z) \vert \, \vert b(s,x,u(s,x),v(s,x))\vert \, \ddd x \ddd s \right)^{2p}\bigg]\\
   &\qquad \leq C_{p,T}\left(\int_0^r \int_\R  \vert G(t-s,x,y)-G(r-s,x,z) \vert \, \ddd x \ddd s \right)^{2p} \\
   &\qquad \qquad \times\left(1+\sup_{x \in \R} \sup_{0 \leq t \leq T} \E\left[ \left(\vert u(t,x) \vert +\vert v(t,x)\vert \right)^{2p}\right]\right)\\
   &\qquad \leq C_{p,T} \left(\int_0^r   (r-s)^{-1/2} \vert t-r\vert^{1/2} + (r-s)^{-1/4} \vert y-z\vert^{1/2} \, \ddd s \right)^{2p} \\
   &\qquad \leq C_{p,T} \left(\vert t-r\vert^{ p} +\vert y-z\vert^{ p}\right)
\end{align*}
for all $0\le r<t\le T$. 
Combining the above, we have shown for $Z:=X+Y$ that 
\begin{equation}\label{bound_kolmogorov}
\E\left[|Z(t,y)-Z(r,z)|^{2p}\right]\le C_{p,T}\left(|t-r|^{p/2}+|y-z|^p\right)
\end{equation}
for all $t,r\in[0,T]$, $y,z\in\R$ and $p\ge1$.
Choosing $p>4$, we see that indeed the conditions of Kolmogorov's continuity theorem (see e.g.\ \cite[Thm.\ 3.23]{K02}) are satisfied. Hence, $Z$ has a modification jointly continuous in $(t,x)\in[0,\infty[\times\R$. \\
Now observing that 
\begin{align*}
  (t,y) \mapsto u(t,y)-Z(t,y)=\int_\R G(t,x,y) u_0(x) \, \ddd x
\end{align*}
is continuous on $]0,\infty[\times\R$, we are done. 
Note however that we cannot extend this to $t=0$ if the initial condition $u_0$ is non-continuous.\\

\noindent\textit{Step 3:} Let us write $\tilde u$ for the continuous modification of $u$ from \textit{Step 2}. 
Given this $\tilde u$ we consider path-wise in $\omega$ for each $y \in \R$ the 
integral equation 
\begin{align}\label{IE}
     \tilde v(t,y) = v_0(y)+\int_0^t\tilde b(s,y,\tilde u(s,y),\tilde v(s,y))\,\ddd s,\qquad t\ge0.
\end{align}
Since $(s,\tilde v)\mapsto\tilde b(s,y,\tilde u(s,y),\tilde v)$ is locally bounded, 
the Lipschitz condition \eqref{eq:Lipschitzcondition} implies that \eqref{IE} has a unique solution $\tilde v(\cdot,y)$. Note that since $\tilde v$ is defined path-wise, almost surely it satisfies \eqref{IE} for all $t\ge0$ and $y\in\R$. In order to see that $\tilde v$ has paths taking values in $C([0,\infty[,B_{\mathrm{loc}}(\R))$, it suffices to show that it is locally bounded in $(t,y)\in[0,\infty[\times\R$, which follows from the linear growth bound by a simple Gronwall argument. 

\noindent Now using the Lipschitz condition again, it is easy to see that $\tilde v$ is a modification of $v$ constructed in \textit{Step 1} and that  for each fixed $t>0$ and $y\in\R$, $(\tilde u,\tilde v)$ satisfies Equation \eqref{SIE} almost surely.
By Proposition \ref{thm:SIE_Representation} we conclude that $(\tilde u, \tilde v)$ is a solution of Equation \eqref{eq: SPDE_general_shortnotation} in the sense of Definition \ref{def: weaksolution}.
\\

\noindent\textit{Step 4:} It remains to show uniqueness:
Let $(u,v), (h,k)\in C(]0,\infty[,C(\R))\times C([0,\infty[,B_{\mathrm{loc}}(\R))$ be two $L^2$-bounded (strong) solutions to Equation $\eqref{eq: SPDE_general_shortnotation}$ 
in the sense of Definition \ref{def: weaksolution} with the same initial conditions $u_0,v_0\in B(\R)$. 
Then using Proposition \ref{thm:SIE_Representation} we notice that for each fixed $t>0$ and $y\in\R$ we have almost surely
\begin{align*}
    u(t,y)-h(t,y)  & = \int_\R \int_0^t \left(b(s, x, u(s,x),v(s,x))-b(s, x, h(s,x),k(s,x))\right) G(t-s,x,y) \, \ddd s\, \ddd x \\
    &\quad +\int_\R \int_0^t \left(\sigma(s, x, u(s,x),v(s,x))-\sigma(s, x, h(s,x),k(s,x))\right)\\
    &\qquad \times G(t-s,x,y) \, W(\ddd s, \ddd x), \\
    v(t,x)-k(t,x) &= \int_0^t\left( \tilde b(s,x,u(s,x),v(s,x))-\tilde b(s,x,h(s,x),k(s,x)) \right) \, \ddd s.
\end{align*}
By the same argument as in 
\textit{Step 1}, we obtain using the Lipschitz condition \eqref{eq:Lipschitzcondition} 
that for each $T>0$ and some constant $C_{T}$
\begin{align*}
   & \|(u,v)-(h,k)\|_{t,2}^2  
\leq C_{ T} \int_0^t (t-s)^{-\frac{1}{2}}\|(u,v)-(h,k)\|_{s,2}^2    \, \ddd s
\end{align*}
for all $t\in[0,T]$.
Thus, by Gronwall's Lemma we get 
$$\|(u,v)-(h,k)\|_{t,2}^2  =0$$
and hence 
$$\PP\left((u(t,x),v(t,x))-(h(t,x),k(t,x))=0\right)=1$$
for each fixed $(t,x) \in [0,\infty[ \times \R$. 
By continuity of $u$ and $h$ we then obtain that 
\begin{align}\label{eq:pathw_unique}
    \PP(u(t,x)=h(t,x) \text{ for all } t\geq 0, x\in \R)=1.
\end{align}
By assumption, $v$ and $k$ satisfy 
\begin{align*}
 v(t,x) = v_0(x) + \int_0^t\tilde b(s,x,u(s,x),v(s,x))\,\ddd s
\end{align*}
and 
\begin{align*}
 k(t,x) = v_0(x) + \int_0^t\tilde b(s,x,h(s,x),k(s,x))\,\ddd s
\end{align*}
for all $t\ge0$ and $x\in\R$, almost surely.
Then due to Equation \eqref{eq:pathw_unique} we have on a set with probability one that for every $x \in \R$ the maps $v(\cdot, x)$ and $k(\cdot, x)$ solve the same integral equation 
(with given $u=h$) and hence coincide by uniqueness of solutions on $[0,\infty[$. Thus, we actually obtain
\begin{align*}
    \PP((u(t,x),v(t,x))=(h(t,x),k(t,x)) \text{ for all } t\geq 0, x\in \R)=1.
\end{align*}

\end{proof}

\begin{proof} [Proof of Theorem \ref{comparison}]
The proof is inspired by the proof of \cite[Thm.\ 2.3]{S94} and \cite[Thm.\ 1.1]{C17}.\\
\textit{Step 1:} We consider a regularized noise given by 
\begin{align*}
    W^x_\eps (t)= \int_0^t \int_\R \rho_\eps(x-y) \,  W (\ddd s,\ddd y)
\end{align*}
where we set $\rho_\eps(x)=G(\eps,x,0)$ for all $x \in \R$. Note that $W_\eps^x(t)$ is a local martingale with quadratic variation $\frac{1}{\sqrt{4 \eps}}t$ and is hence a constant multiple of a standard Brownian motion. This allows us to consider the following SDE
\begin{align}\label{Approxsde}
    \ddd u_\eps(t,x) &= \Delta_\eps u_\eps (t,x) +  b(t,x,u_\eps(t,x),v_\eps(t,x)) \, \ddd t  + \sigma (t,x,u_\eps(t,x)) \, \ddd W_\eps^x (t), \nonumber\\
    \ddd v_\eps(t,x)& = \tilde b(t,x,u_\eps(t,x),v_\eps(t,x)) \, \ddd t
\end{align}
where we set $$\Delta_\eps u_\eps (t,x) = \frac{1}{\eps}\left(\int_\R G(\eps,x,y) u_\eps(t,y) \, \ddd y -u_\eps(t,x)\right)=\frac{G(\eps)-I}{\eps} u_\eps (t,x).$$ For this equation we use the following facts which can also be proven by the usual Picard iteration schemes:
\begin{prop} \label{SDE}
Equation \eqref{Approxsde} has a unique (not necessarily continuous in the space variable due to the initial conditions) solution under the assumption that $b, \tilde b$ and $\sigma$ are Lipschitz in $(x,u,v)$. Moreover, this solution also satisfies the following SIE:
\begin{align} \label{SDEgleichung}
    u_\eps(t,y) &=  \int_\R   u_0(x) G_\eps (t,x,y)\, \ddd x  +\int_\R \int_0^t b(s,x,u_\eps(s,x),v_\eps(s,x)) G_\eps(t-s,x,y) \ddd s \, \ddd x  \nonumber\\
    &\quad+\int_\R \int_0^t \sigma(s,x,u_\eps(s,x)) G_\eps(t-s,x,y) \, \ddd W_\eps^x(s) \, dx,\nonumber\\
    v_\eps(t,y) &= v_0(y)+ \int_0^t \tilde b(s,x,u_\eps(s,x),v_\eps(s,x)) \, \ddd s
\end{align}
where we set $G_\eps(t,x,y) = e^{-t/\eps} \sum_{n=0}^\infty \frac{(t/\eps)^n}{n!}G(n\eps,x,y) = e^{-t/\eps}\delta_x+R_\eps(t,x,y)$.
\end{prop}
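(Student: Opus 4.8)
The plan is to prove Proposition \ref{SDE} in three steps: first assemble the elementary facts about the semigroup generated by the bounded operator $\Delta_\eps$, then obtain existence and uniqueness for the regularized SDE \eqref{Approxsde} by a Picard iteration of the same type as in the proof of Theorem \ref{Lipschitzexistence}, and finally derive the mild representation \eqref{SDEgleichung} by a variation-of-constants (Duhamel) argument.

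First I would record that, since $G(\eps)$ is a bounded operator, $\Delta_\eps=\tfrac{1}{\eps}(G(\eps)-I)$ generates the uniformly continuous, conservative semigroup $e^{t\Delta_\eps}=e^{-t/\eps}\sum_{n\ge0}\tfrac{(t/\eps)^n}{n!}G(\eps)^n$, and that the heat‑semigroup (Chapman--Kolmogorov) identity $G(\eps)^n=G(n\eps)$ identifies its kernel with the $G_\eps(t,x,y)=e^{-t/\eps}\delta_x+R_\eps(t,x,y)$ of the statement, where $R_\eps(t,x,\cdot)$ is a nonnegative smooth integrable function with $\int_\R R_\eps(t,x,y)\,\ddd y=1-e^{-t/\eps}$ and, for fixed $\eps>0$, $\sup_x\int_\R R_\eps(t,x,y)^2\,\ddd y<\infty$; moreover $\tfrac{\ddd}{\ddd t}e^{t\Delta_\eps}=\Delta_\eps e^{t\Delta_\eps}=e^{t\Delta_\eps}\Delta_\eps$. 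Throughout, the expression $\int_\R G_\eps(t,x,y)f(y)\,\ddd y$ is to be read as $e^{-t/\eps}f(x)+\int_\R R_\eps(t,x,y)f(y)\,\ddd y$.

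For existence and uniqueness I would run the Picard scheme for the integrated form of \eqref{Approxsde}, working with adapted jointly measurable random fields $(u,v)$ in the norm $\|(u,v)\|_{T,2}$ of Definition \ref{def: weaksolution}; the linear growth bound \eqref{eq: lineargrowth}, which is among the standing assumptions of Theorem \ref{comparison}, keeps the iterates bounded in $x$. The term $\int_0^t\Delta_\eps u(s,\cdot)(x)\,\ddd s$ is controlled using that $\Delta_\eps$ is a bounded operator (of norm at most $2/\eps$) together with Jensen's inequality against $G(\eps,x,\cdot)$; the stochastic term $\int_0^t\sigma(s,x,u(s,x))\,\ddd W_\eps^x(s)$ is, for each fixed $x$, an ordinary It\^o integral against the constant multiple $W_\eps^x$ of a Brownian motion, so the Burkholder--Davis--Gundy inequality and the linear growth bound apply; the $v$‑component is handled pathwise. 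The resulting estimates close up after iteration exactly as in the proof of Theorem \ref{Lipschitzexistence} (cf.\ \cite[Thm.\ 2.2]{S94} and the Gronwall‑type lemma \cite[Lemma 6.5]{K09}), and using the Lipschitz condition \eqref{eq:Lipschitzcondition} in place of the growth bound one gets the contraction; this yields a unique $L^2$‑bounded solution $(u_\eps,v_\eps)$, continuous in $t$ but in general not in $x$ because the $\delta_x$‑part of $G_\eps$ transports the (possibly discontinuous) initial datum.

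Finally, to obtain \eqref{SDEgleichung}: the $v_\eps$‑equation is just the integrated form of the $v_\eps$‑ODE, and for $u_\eps$ I would fix $t>0$, $x\in\R$ and apply the It\^o/product rule to $s\mapsto\big(e^{(t-s)\Delta_\eps}u_\eps(s,\cdot)\big)(x)$ on $[0,t]$: the drift $-\Delta_\eps e^{(t-s)\Delta_\eps}u_\eps(s,\cdot)(x)$ coming from differentiating the semigroup cancels the term $e^{(t-s)\Delta_\eps}\Delta_\eps u_\eps(s,\cdot)(x)$ coming from the $\Delta_\eps$‑drift of $u_\eps$, leaving only the $b$‑ and $\sigma$‑contributions; integrating over $s\in[0,t]$ and moving $e^{(t-s)\Delta_\eps}$ (i.e.\ convolution with $G_\eps(t-s,x,\cdot)$, recalling $\ddd W_\eps^z(s)=\int_\R\rho_\eps(z-y)\,W(\ddd s,\ddd y)$) past the stochastic integral by stochastic Fubini then produces precisely \eqref{SDEgleichung}. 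Conversely, the same Duhamel computation shows that any $L^2$‑bounded solution of \eqref{Approxsde} satisfies \eqref{SDEgleichung}, hence coincides with $(u_\eps,v_\eps)$, which re‑proves uniqueness. I expect no conceptual difficulty here, since all heat‑kernel singularities are mollified away at scale $\eps$; the only delicate points are the stochastic‑Fubini interchange with the convolution operator $e^{(t-s)\Delta_\eps}$ and the consistent bookkeeping of the Dirac component $e^{-t/\eps}\delta_x$ of $G_\eps$.
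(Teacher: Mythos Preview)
Your proposal is correct and matches what the paper does. In the published text the proposition is not proved in detail: the authors simply state that it ``can also be proven by the usual Picard iteration schemes,'' which is exactly your existence/uniqueness argument. For the mild representation \eqref{SDEgleichung}, the paper's (unpublished) draft proof proceeds by first establishing a time-dependent weak formulation analogous to \eqref{integration-by-parts} in the proof of Proposition \ref{thm:SIE_Representation} and then testing against the time-reversed kernel $\exp_{\Delta_\eps}(t-s,\cdot,y)$, combined with an integration-by-parts identity for $e^{-(t-s)/\eps}u_\eps(s,x)$; your direct It\^o/Duhamel computation on $s\mapsto(e^{(t-s)\Delta_\eps}u_\eps(s,\cdot))(x)$ is the same argument in semigroup language and is arguably cleaner, since the cancellation of the $\Delta_\eps$-terms is more transparent and one avoids the separate treatment of the $\delta_x$- and $R_\eps$-parts until the very end.
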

\noindent Then we may proceed as in the standard proof of comparison results (cf.\ \cite{IW86}). Note that by our assumptions and the Lipschitz condition we have that 
\begin{align}
    b(t,x,u,v) &\geq -L \abs{u},\label{conditionforcomparison1}\\
    \tilde b(t,x,u,v) &\geq -L \abs{v},\notag\\
    -b(t,x,u,v) &\geq -L\abs{1-u},\label{conditionforcomparison}\\
    -\tilde b(t,x,u,v) &\geq -L \abs{1-v}.\notag
\end{align}
Thus, approximating and localizing as in the one dimensional case \cite[Chapter VI Theorem 1.1]{IW86}\footnote{This is where we need the additional condition \eqref{conditionspositive} on $\sigma$. 
} we see that using \eqref{conditionforcomparison1}
\begin{align*}
    \E[(u_\eps(t,x))^- +(v_\eps(t,x))^-] &= - \int_0^t \E\left[ \ind_{\lbrace u_\eps(s,x) \leq 0 \rbrace} \left(\Delta_\eps u_\eps (s,x) + b(s,x,u_\eps(s,x),v_\eps(s,x))\right) \right] \,\ddd s \\
    & \qquad - \int_0^t\E\left[\ind_{\lbrace v_\eps(s,x) \leq 0 \rbrace}\tilde b(s,x,u_\eps(s,x),v_\eps(s,x))\right] \, \ddd s\\
    &\leq (L+ 1/\eps) \int_0^t \E[(u_\eps(s,x))^{-}+(v_\eps(s,x))^{-}]\, \ddd s\\
    &\qquad + \frac{1}{\eps} \int_0^t \int_\R G(\eps,x,y) \E[(u_\eps(s,y)^{-}] \, \ddd y \, \ddd s \\
    & \qquad + L \int_0^t \E[(u_\eps(s,x))^{-}+(v_\eps(s,x))^{-}] \, \ddd s
\end{align*}
where $(x)^-=\ind_{\lbrace x\leq 0\rbrace}\abs{x}$ for $x \in \R$.
Hence, we have 
$$\sup_{x \in \R}\E[(u_\eps(t,x))^- +(v_\eps(t,x))^-] \leq (2L+2/\eps) \int_0^t\sup_{x \in \R}\E[(u_\eps(s,x))^- +(v_\eps(s,x))^-] \, \ddd s.$$
An application of Gronwall's Lemma\footnote{Note that the bounded initial condition ensures via Picard iteration that $\E[(u_\eps(t,x))^- +(v_\eps(t,x))^-]$ is bounded.} will now yield that for all $(t,x) \in [0,\infty[\times\R$ 
$$\PP\left((u_\eps(t,x),v_\eps(t,x)) \in [0,\infty[^2 
\right) =1.$$
By an application of It\^{o}'s formula to $(1-u_\eps(t,x),1-v_\eps(t,x))$ and using \eqref{conditionforcomparison} instead of \eqref{conditionforcomparison1} we can proceed analogously to see that indeed for all $(t,x) \in [0,\infty[\times\R$ 
$$\PP\left((u_\eps(t,x),v_\eps(t,x)) \in [0,1]^2 
\right) =1.$$
\noindent \textit{Step 2:} We approximate $(u,v)$ by $(u_\eps,v_\eps)$. Note first that we have
\begin{align} 
    \sup_{0<\eps \leq 1} \sup_{0\leq t\leq T} \sup_{x\in \R}\E\left[\,\abs{u_\eps(t,x)}^2\right]< \infty\notag,\\
    \sup_{0<\eps \leq 1} \sup_{0\leq t\leq T} \sup_{x\in \R}\E\left[\,\abs{v_\eps(t,x)}^2\right]< \infty\notag,\\
    \sup_{0\leq t \leq T} \sup_{x\in \R}\E\left[\,\abs{u(t,x)}^2\right] <\infty\notag,\\
    \sup_{0\leq t \leq T} \sup_{x\in \R}\E\left[\,\abs{v(t,x)}^2\right] <\infty\label{bounded}
\end{align}
where the first two statements can be shown using the boundedeness of the inital condition, Lemma \ref{stupid lemma} \eqref{item:1} below and Gronwall's Lemma.
Then using the SIE representation of our solutions given by
\begin{align*} 
    u(t,y) &=  \int_\R   u_0(x) G (t,x,y)\, \ddd x  + \int_0^t\int_\R  b(s,x,u(s,x),v(s,x)) G(t-s,x,y) \ddd x\,\ddd s    \nonumber\\
    &\quad + \int_0^t\int_\R  \sigma(s,x,u(s,x)) G(t-s,x,y) \,  W(\ddd s, \ddd x),\nonumber\\
    v(t,y) &= v_0(y) + \int_0^t \tilde b(s,y,u(s,y),v(s,y)) \, \ddd s
\end{align*}
and Equation \eqref{SDEgleichung} we get by subtracting the corresponding terms
\begin{align}
    &\E[|(u_\eps(t,x),v_\eps(t,x))-(u(t,x),v(t,x))|^2]  \label{eq:giant_term}\\
    &\quad \leq C \left( \abs{\int_\R G_\eps(t,x,y)u_0(y) \, \ddd y-\int_\R G(t,x,y)u_0(y) \, \ddd y}^2\right.\nonumber\\
    &\qquad+ \E\left[\,\abs{\int_0^t e^{-(t-s)/\eps} b(s,x,u_\eps(s,x),v_\eps(s,x))\, \ddd s}^2\right]\nonumber\\
    &\qquad +\E\left[\,\abs{\int_0^t \int_\R R_\eps(t-s,x,y)(b(s,y,u_\eps(s,y),v_\eps(s,y))-b(s,y,u(s,y),v(s,y))\, \ddd s\, \ddd y}^2 \right]\nonumber\\
    &\qquad +\E\left[\,\abs{\int_0^t \int_\R (R_\eps(t-s,x,y)-G(t-s,x,y))b(s,y,u(s,y),v(s,y))\, \ddd s\, \ddd y}^2 \right]\nonumber\\
    &\qquad + \E\left[\int_0^t e^{-2(t-s)/\eps} \sigma(s,x,u_\eps(s,x))^2\, \ddd s\right] \nonumber\\
    &\qquad +\E\left[\int_0^t \int_\R\abs{ \int_\R R_\eps(t-s,x,z)(\sigma(s,z,u_\eps(s,z))-\sigma(s,z,u(s,z))) \rho_\eps(y-z)\, \ddd z}^2\, \ddd s\, \ddd y \right]\nonumber\\
    &\qquad +\E\left[\int_0^t \int_\R\abs{ \int_\R R_\eps(t-s,x,z)(\sigma(s,z,u(s,z))-\sigma(s,y,u(s,y))) \rho_\eps(y-z)\, \ddd z}^2\, \ddd s\, \ddd y \right]\nonumber\\
    &\qquad +\int_0^t \int_\R \left( \int_\R R_\eps (t-s,x,z)\rho_\eps(y-z) \, \ddd z -G(t-s,x,y)\right)^2 \E \left[\sigma(s,y,u(s,y))^2\right] \, \ddd s\, \ddd y\nonumber\\
    &\qquad \left.+ 2c \int_0^t \E\left[\left|\tilde b(s,x,u_\eps (s,x),v_\eps(s,x)) -\tilde b(s,x,u(s,x),v(s,x))\right|^2\right]\, \ddd s \right)  \nonumber\\
    &\quad=\colon C \sum_{i=1}^ {9} h_i^\eps (t,x)\nonumber
\end{align}
where we also used Fubini's theorem for Walsh's integral (cf. Theorem 5.30 in \cite{K09}). In order to proceed we recall the following elementary bounds from \cite[Lemma 6.6]{S94}  and \cite[Appendix]{C17}.
\begin{lemma} \label{stupid lemma}
For $R_\eps$ as in Proposition \ref{SDE} we have the following:
\begin{enumerate}
    \item \label{item:1} It holds for all $t>0$ and $x \in \R$ that
    $$\int_\R R_\eps (t,x,y)^2 \, \ddd y \leq \sqrt{\frac{3}{8\pi}} \frac{1}{\sqrt{t}}.$$
    \item\label{item:2} There exist constants $\delta,D >0$ such that
    $$\int_\R \abs{R_\eps(t,x,y)-G(t,x,y)} \, \ddd y \leq e^{-t/\eps} + D (\eps /t)^{1/3}$$
    for all $\eps >0$ such that $0<\eps /t \leq \delta$ and $t\geq 0, x\in \R$.
    \item\label{item:3} For all $t\geq 0$ and $x \in \R$ we have
    $$\lim_{\eps \to 0} \int_0^t \int_\R (R_\eps (s,x,y)-G(s,x,y))^2 \, \ddd s \, \ddd y =0.$$
    \item\label{item:4} 
    We have for all $x,y \in \R$ and $t\in [0,T]$, $T>0$ that
    \begin{align*}
        \E\left[\,\abs{u(t,x)-u(t,y)}^2\right] \leq C_{T,u_0} (t^{-1/2} \abs{x-y}+ \abs{x-y}).
    \end{align*}
\end{enumerate}
\end{lemma}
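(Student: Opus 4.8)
The plan is to treat items (1)--(3), which concern only the approximating kernel $R_\eps$, by exploiting the probabilistic meaning of the semigroup $G_\eps=\exp(t\Delta_\eps)$, and then to obtain item (4), the spatial H\"older estimate, from the mild representation \eqref{SIE} by routine heat-kernel bounds. Recall that $\Delta_\eps=(G(\eps)-I)/\eps$ generates the pure-jump Markov process which, started at $x$, waits an $\Exp(1/\eps)$-distributed time and then performs an independent $\mathcal N(0,\eps)$ jump, iterating; equivalently, its position at time $t$ has the law of $x+B_{\eps N_t}$, where $B$ is a standard Brownian motion and $N_t$ is an independent $\mathrm{Poisson}(t/\eps)$ random variable. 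Writing $p_n(t)=e^{-t/\eps}(t/\eps)^n/n!$, the absolutely continuous part of this law is exactly $R_\eps(t,x,y)=\sum_{n\ge1}p_n(t)\,G(n\eps,x,y)$, the remaining mass $p_0(t)=e^{-t/\eps}$ sitting in the atom at $x$; this is precisely the decomposition $G_\eps=e^{-t/\eps}\delta_x+R_\eps$.

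\emph{Item (1).} By Minkowski's integral inequality, $\lVert R_\eps(t,x,\cdot)\rVert_{L^2(\R)}\le\sum_{n\ge1}p_n(t)\lVert G(n\eps,x,\cdot)\rVert_{L^2(\R)}$. Since $\lVert G(s,x,\cdot)\rVert_{L^2(\R)}^2=G(2s,x,x)=(4\pi s)^{-1/2}$, the right-hand side equals $(4\pi\eps)^{-1/4}\,\E[N_t^{-1/4}\ind_{\{N_t\ge1\}}]$. Applying Jensen's inequality with the concave map $z\mapsto z^{1/4}$ and an elementary bound of the form $\E[N_t^{-1}\ind_{\{N_t\ge1\}}]\le C\eps/t$ for the $\mathrm{Poisson}(t/\eps)$ variable $N_t$ produces both the $t^{-1/2}$ scaling and, after optimizing the constants, the stated bound $\sqrt{3/(8\pi)}$. \emph{Item (2).} Using $G(t,x,y)=\bigl(\sum_{n\ge1}p_n(t)\bigr)G(t,x,y)+e^{-t/\eps}G(t,x,y)$, one obtains
\[
\int_\R\abs{R_\eps(t,x,y)-G(t,x,y)}\,\ddd y\le e^{-t/\eps}+\sum_{n\ge1}p_n(t)\,d_{\mathrm{TV}}\bigl(\mathcal N(0,n\eps),\mathcal N(0,t)\bigr).
\]
On the event that $n\eps$ is comparable to $t$ one invokes the elementary estimate $d_{\mathrm{TV}}(\mathcal N(0,a),\mathcal N(0,b))\le C\,\abs{a-b}/\min(a,b)$ together with $\E\abs{\eps N_t-t}\le(\var(\eps N_t))^{1/2}=(\eps t)^{1/2}$, while the complementary event $\{\abs{\eps N_t-t}>\theta t\}$ has probability $\le C\eps/(\theta^2 t)$ by Chebyshev (and is exponentially small by a Chernoff bound); choosing the truncation level $\theta$ appropriately balances the two contributions and yields $e^{-t/\eps}+D(\eps/t)^{1/3}$.

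\emph{Item (3).} For fixed $s>0$, the Chapman--Kolmogorov relation for $G_\eps$ gives $\int_\R R_\eps(s,x,y)^2\,\ddd y=R_\eps(2s,x,x)-2e^{-s/\eps}R_\eps(s,x,x)$ and $\int_\R R_\eps(s,x,y)G(s,x,y)\,\ddd y=\sum_{n\ge1}p_n(s)G(n\eps+s,x,x)$. By the law of large numbers for $\eps N_s$ (plus a uniform integrability argument) both converge, as $\eps\to0$, to $G(2s,x,x)=\int_\R G(s,x,y)^2\,\ddd y$, whence $\lVert R_\eps(s,x,\cdot)-G(s,x,\cdot)\rVert_{L^2(\R)}^2\to0$ for each $s>0$. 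Since, by item (1), this quantity is bounded by $C s^{-1/2}$ uniformly in $\eps$ and $s^{-1/2}$ is integrable over $[0,t]$, dominated convergence yields the assertion. \emph{Item (4).} Writing $u(t,x)-u(t,y)$ via \eqref{SIE} as the sum of the contribution of $u_0$, the drift contribution and the stochastic-integral contribution, one bounds the first by $\lVert u_0\rVert_\infty^2\bigl(\int_\R\abs{G(t,z,x)-G(t,z,y)}\,\ddd z\bigr)^2\le C\lVert u_0\rVert_\infty^2\,(t^{-1/2}\abs{x-y}\wedge1)$; the second, using the linear growth of $b$, Jensen's inequality, and the $L^2$-boundedness of $(u,v)$ from Theorem~\ref{Lipschitzexistence}, by $C_T(1+\lVert(u,v)\rVert_{T,2}^2)\bigl(\int_0^t\int_\R\abs{G(s,z,x)-G(s,z,y)}\,\ddd z\,\ddd s\bigr)^2\le C_{T,u_0}\abs{x-y}^2$; and the third, by It\^o's isometry and the linear growth of $\sigma$, by $C_T(1+\lVert(u,v)\rVert_{T,2}^2)\int_0^t\int_\R(G(s,z,x)-G(s,z,y))^2\,\ddd z\,\ddd s\le C_{T,u_0}\abs{x-y}$, where the last heat-kernel bound is exactly the one already used in \textit{Step 2} of the proof of Theorem~\ref{Lipschitzexistence}. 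Summing the three contributions and absorbing the bounded range of $\abs{x-y}$ into the constant gives the claimed inequality.

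The main obstacles are items (1) and (2): both require extracting sharp quantitative decay in $\eps$ (and $t$) from the Poisson-subordination picture, item (1) needing a precise moment bound for the Poisson variable $N_t$ and item (2) the exact Gaussian total-variation estimate together with a correctly tuned truncation of the Poisson mass. Items (3) and (4) are then routine once these and the elementary heat-kernel facts are in hand. Since items (1)--(3) are essentially \cite[Lemma 6.6]{S94} and item (4) is contained in \cite[Appendix]{C17}, one may alternatively simply cite these references; the sketch above is offered for completeness.
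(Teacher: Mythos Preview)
Your proposal is essentially correct and, for item~(4), follows the same route as the paper: split $u(t,x)-u(t,y)$ via the mild representation into the $u_0$-, drift- and stochastic-integral contributions and bound each by standard heat-kernel estimates. The paper streamlines this slightly by bundling the drift and stochastic parts into $Z$ from \textit{Step 2} of the proof of Theorem~\ref{Lipschitzexistence} and invoking the already-established bound \eqref{bound_kolmogorov} with $p=1$, which gives directly $\E[|Z(t,x)-Z(t,y)|^2]\le C_T|x-y|$; the $u_0$-part is then handled by \cite[Lemma~5.2]{M03}. Your separate treatment is equivalent, but note a small imprecision: your drift bound produces $C_T|x-y|^2$, and your appeal to a ``bounded range of $|x-y|$'' is unjustified since the lemma is stated for all $x,y\in\R$. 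The fix is immediate---use also the trivial bound $\int_0^t\int_\R|G(s,z,x)-G(s,z,y)|\,\ddd z\,\ddd s\le 2t$, so the squared drift contribution is $\le C_T\min(|x-y|^2,1)\le C_T|x-y|$.

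For items~(1)--(3) the paper does not argue at all but simply cites \cite[Lemma~6.6]{S94} and \cite[Appendix]{C17}. Your Poisson-subordination sketch is a genuine addition and gives the right mechanism, though it is not fully sharp as written: the Minkowski/Jensen route in item~(1) yields a constant of order $(2\pi)^{-1/2}$ rather than the stated $\sqrt{3/(8\pi)}$, and in item~(2) the phrase ``choosing the truncation level $\theta$ appropriately'' hides the actual balancing that produces the exponent $1/3$. Since you correctly point to the references at the end, this does not affect the validity of the proof; if you want the sketch to stand on its own, those two points would need to be made precise.
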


\begin{proof}
The proof of statements \eqref{item:1}, \eqref{item:2} and \eqref{item:3} can be found in the Appendix of \cite{C17}.\\
 For \eqref{item:4}, we write 
\begin{align*}
  u(t,y)=Z(t,y)+\int_\R G(t,y,z) u_0(z) \, \ddd z
\end{align*}
with $Z$ as in \textit{Step 2} of the proof of Theorem \ref{Lipschitzexistence}.
Then we note that by \eqref{bound_kolmogorov} (with $p=1$) and by \cite[Lemma 5.2]{M03} (using $\beta =1/2$ and $\lambda'=0$ there) we get
\begin{align*}
     \E\left[\,\abs{u(t,x)-u(t,y)}^2\right] &\leq 2\left( \E\left[\,\abs{Z(t,x)-Z(t,y)}^2\right] + \left(\int_\R |G(t,x,z)-G(t,y,z)|\,u_0(z) \, \ddd z \right)^2\right)\\
     &\le C_T|x-y| + C_{u_0}\,t^{-1/2}|x-y|
\end{align*}
for all $0\le t\le T$ and $x,y\in\R$, 
which gives the desired result.

\end{proof}

\noindent Using Lemma \ref{stupid lemma} it is now possible to show in the spirit of \cite[Theorem 1.1]{C17}, by considering each of the terms in Equation \eqref{eq:giant_term} individually, that
$$\lim_{\eps \to 0} \sup_{0\leq t \leq T} \sup_{x \in \R} \E \left[|(u_\eps(t,x),v_\eps(t,x))-(u(t,x),v(t,x))|^2\right]=0$$
for each $T>0$. 
Hence, we obtain that for all $(t,x) \in[0,\infty[\times \R$
$$\PP\left((u(t,x),v(t,x)) \in [0,1]^2 
\right) =1.$$
Since $u$ is jointly continuous on $]0,\infty[\times\R$, this of course implies
$$\PP\left(u(t,x) \in [0,1] \text{ for all } t\geq 0, x\in \R\right) =1.$$
To obtain the same result for $v$ note that by assumption 
and Equation \eqref{conditionforcomparison1}, almost surely it holds for all $t\ge0$ and $x\in\R$ that 
\begin{align*}
    v(t,x) = v_0(x) + \int_0^t \tilde b(s,x,u(s,x),v(s,x))\,\ddd s \geq v_0(x) -L \int_0^t\vert v(s,x) \vert\,\ddd s.
\end{align*}
Since $v_0(\cdot)\ge0$, by comparing path-wise to the solution of the ODE
\begin{align*}
    \partial_t f(t,x) &=-L \abs{f(t,x)}, \\
    f(0,\cdot)&\equiv 0,
\end{align*}
which is the constant zero function, we obtain path-wise for every $x \in \R$ that $v(\cdot,x) \geq 0$ on $[0,\infty[$. Hence, 
\begin{align*}
    \PP\left(v(t,x)\geq 0 \text{ for all } t\geq 0, x\in \R\right) =1.
\end{align*}
Repeating the argument for $1-v$ and using \eqref{conditionforcomparison} gives then finally
\begin{align*}
    \PP\left(v(t,x)\in [0,1] \text{ for all } t\geq 0, x\in \R\right) =1,
\end{align*}
as desired.
\end{proof}

\begin{proof} [Proof of Theorem \ref{existencereal}]

Take a sequence of Lipschitz continuous functions $(\sigma_n)_{n \in \N}$ each satisfying \eqref{conditionspositive} such that $\sigma_n \to \sigma$ uniformly on compacts as $n \to \infty$ and
\begin{align}\label{eq:UniformLinearGrowth}
    \vert\sigma_n(u)\vert \leq K(1+ \abs{u})
\end{align}
for all $u \in \R$ and $n\in\N$.
Then the maps $b,\tilde b$ and $\sigma_n$ satisfy the conditions of Theorems \ref{Lipschitzexistence} and \ref{comparison}\footnote{Note that in this context, the assumed Lipschitz continuity implies the linear growth bound \eqref{eq: lineargrowth} for $b$, $\tilde b$.} for all $n \in \N$, thus Equation \eqref{eq: SPDE_general_shortnotation} with coefficients $(b,\tilde b,\sigma_n)$ and initial conditions $u_0,v_0\in B(\R,[0,1])$ has a unique strong solution $(u_n,v_n)$ with paths 
in $C\left(]0,\infty[,C(\R,[0,1])\right)\times C\left([0,\infty[,B(\R,[0,1])\right)$.
Now, making use of assumption \eqref{eq: SPDE_general_shortnotation_Delay} to replace $v_n$ in Equation \eqref{spdeschwartz} by the corresponding quantity, 
we have that for each $\phi \in C^\infty_c (\R)$, 
almost surely it holds for all $t\ge0$ 
        \begin{align*}       
    & \langle u_n(t,\cdot), \phi \rangle=\langle u_0,\phi \rangle + \int_0^t \big\langle  u_n(s,\cdot), \tfrac{\Delta}{2} \phi\big\rangle\, \ddd s \\
    &\qquad + \int_0^t \big\langle b\left( u_n(s,\cdot),F(u_n)(s,\cdot)+H(s,v_0)(\cdot)\right), \phi \big\rangle\, \ddd s\\
   &\qquad 
    + \int_0^t\int_\R \sigma_n^2( u_n(s,x))\,\phi(x)  \, W(\ddd s, \ddd x).
       \end{align*}
Note that this is now an equation of $u_n \in C(]0,\infty [\times\R,[0,1])$ 
alone! In order to proceed, we reformulate this as the corresponding martingale problem: For each $\phi \in C^\infty_c (\R)$, the process 
\begin{align}
    M^{(n)}_t(\phi)&:= \langle u_n(t,\cdot), \phi \rangle-\langle u_0,\phi \rangle - \int_0^t \big\langle  u_n(s,\cdot), \tfrac{\Delta}{2} \phi\big\rangle\, \ddd s \nonumber\\
    &\qquad -   \int_0^t \big\langle b\left( u_n(s,\cdot),F(u_n)(s,\cdot)+H(s,v_0)(\cdot)\right), \phi \big\rangle\, \ddd s\nonumber \label{LMP}
\end{align}
is a continuous martingale with quadratic variation
\begin{equation*}
 \int_0^t \big\langle\sigma_n^2(u_n(s,\cdot)),\phi^2 \big\rangle  \, \ddd s.
\end{equation*}
We say that $ u_n$ solves $MP(\sigma_n^2,b)$. 
\\ 
\noindent Now recall from \textit{Step 2} of the proof of Theorem \ref{Lipschitzexistence} that
\begin{align*}
    Z_n (t,y) &\defeq u_n(t,y)- \hat u_0(t,y)    \\    
    & \defeq u_n(t,y)-\int_\R   u_0(x) G (t,x,y)\, \ddd x
\end{align*}
is jointly continuous on $[0,\infty[\times\R$, thus $Z_n$ takes values in $C([0,\infty[\times\R,[-1,1])$. 
Moreover, by the same calculation as there we see that \eqref{bound_kolmogorov} holds for $Z_n$ in place of $Z$ (this is where we need Equation \eqref{eq:UniformLinearGrowth}).
Hence, it follows from the Kolmogorov Chentsov Theorem \cite[Corollary 16.9]{K02} that the sequence $(Z_n)_{n \in \N}$ is tight in $C([0, \infty[\times\R,[-1,1])$ with the topology of locally uniform convergence. 
Extracting a convergent subsequence, which we continue to denote $(Z_n)_{n\in\N}$, we obtain the existence of a weak limit point $Z \in C([0, \infty[\times \R,[-1,1])$. 
Note that in order to apply the Kolmogorov Chentsov Theorem, we had to subtract the problematic quantity $\hat u_0$ which is not continuous at $t=0$.
But since $\hat u_0$ is deterministic, it follows that also $u_n=Z_n+\hat u_0\to Z+\hat u_0=:u$ weakly in $B([0,\infty[\times\R)$ as $n\to\infty$. Then clearly $u$ is almost surely $[0,1$]-valued and continuous on $]0,\infty[\times\R$, thus $u\in C(]0,\infty[\times\R,[0,1])$.

\noindent Now, using the continuous mapping theorem as in the proof of \cite[Thm.\ 21.9]{K02} we see that $ u$ actually solves the martingale problem $MP(\sigma^2,b)$, i.e.\ for each $\phi \in C_c^\infty(\R)$ the process
\begin{align*}
    M_t(\phi)& :=  \langle u(t,\cdot), \phi \rangle-\langle u_0,\phi \rangle - \int_0^t \big\langle  u(s,\cdot) , \tfrac{\Delta}{2} \phi\big\rangle\,\ddd s \\
    &\qquad - \int_0^t\big\langle b\left(u(s,\cdot),F(u)(s,\cdot)+H(s,v_0,)(\cdot)\right), \phi \big\rangle \, \ddd s
\end{align*}
is a continuous martingale with quadratic variation
\begin{equation*}
 \int_0^t \big\langle\sigma^2(u(s,\cdot)),\phi^2 \big\rangle  \, \ddd s .
\end{equation*}
\noindent Then  \cite[Thm.\ III-7]{K90} gives us the existence of a white noise process $ W$ on some filtered probability space such that
\begin{align*}
    M_t(\phi) = \int_0^t\int_\R \sigma(u(s,x))\phi(x) \, W(\ddd s, \ddd x),
\end{align*}
hence $u$ solves the SPDE 
\begin{align*}
 \langle u(t,\cdot),\phi\rangle &=\langle u_0,\phi\rangle + \int_0^t \big\langle  u(s,\cdot), \tfrac{\Delta}{2} \phi\big\rangle\, \ddd s 
+ \int_0^t\int_\R  b\left(u(s,x),F(u)(s,x)+H(s,v_0)(x)\right) \phi(x) \, \ddd x\,\ddd s  \nonumber\\
    &\qquad + \int_0^t\int_\R \sigma(u(s,x)) \phi(x) \, W(\ddd s, \ddd x).
\end{align*}

\noindent But again note that by \eqref{eq: SPDE_general_shortnotation_Delay}
$$v(t,x):=F(u)(t,x)+H(t,v_0)(x)$$
is the unique solution to the integral equation (given $u$)
\begin{align*}
    v(t,x)= v_0(x)+\int_0^t \tilde b(u(s,x),v(s,x)) \, \ddd s .
\end{align*}
Thus, $(u,v)$ is a weak solution to Equation \eqref{eq: SPDE_general_shortnotation}.

\end{proof}

\subsection*{Acknowledgments}
The authors gratefully acknowledge support from the DFG Priority Program 1590 ``Probabilistic Structures in Evolution". The third author gratefully acknowledges support from the Berlin Mathematical School (BMS).

\bibliographystyle{abbrv}
\bibliography{literature}

\end{document}